\numberwithin{equation}{section}
\DeclareFontShape{T1}{lmr}{b}{sc}{<->ssub*cmr/bx/sc}{}
\DeclareFontShape{T1}{lmr}{bx}{sc}{<->ssub*cmr/bx/sc}{}
\newcommandx{\unsure}[2][1=]{\todo[linecolor=red,backgroundcolor=red!25,bordercolor=red,#1]{#2}}
\newcommandx{\change}[2][1=]{\todo[linecolor=blue,backgroundcolor=blue!25,bordercolor=blue,#1]{#2}}
\newcommandx{\info}[2][1=]{\todo[linecolor=OliveGreen,backgroundcolor=OliveGreen!25,bordercolor=OliveGreen,#1]{#2}}
\newcommandx{\improvement}[2][1=]{\todo[linecolor=black,backgroundcolor=black!25,bordercolor=black,#1]{#2}}
\newcommandx{\thiswillnotshow}[2][1=]{\todo[disable,#1]{#2}}
\crefname{proposition}{Proposition}{Propositions}
\crefname{equation}{}{}
\newtheorem{theorem}{Theorem}[section]
\newtheorem{lemma}[theorem]{Lemma}
\newtheorem{proposition}[theorem]{Proposition}
\newtheorem{corollary}[theorem]{Corollary}
\theoremstyle{definition}
\newtheorem{definition}[theorem]{Definition}
\newtheorem{remark}[theorem]{Remark}
\crefname{assumption}{Assumption}{Assumptions}
\crefname{definition}{Definition}{Definitions}
\crefname{corollary}{Corollary}{Corollaries}
\crefname{enumi}{item}{items}
\DeclareMathOperator{\C}{\mathbb{C}}
\renewcommand{\i}{\mathbf{i}}
\renewcommand{\tilde}{\widetilde}
\renewcommand{\epsilon}{\varepsilon}
\renewcommand{\i}{\mathbf{i}}
\renewcommand{\tilde}{\widetilde}
\newcommand{\iL}{\mathsf{L}}
\newcommand{\iR}{\mathsf{R}}
\numberwithin{equation}{section}		
\numberwithin{figure}{section}			
\numberwithin{table}{section}				
\newcommand{\R}{\mathbb{R}}
\newcommand{\A}{\mathcal{A}}
\renewcommand{\O}{\mathcal{O}}
\newcommand{\Rc}{\mathcal{R}}
\newcommand{\Z}{\mathbb{Z}}
\renewcommand{\epsilon}{\varepsilon}
\newcommand{\ds}{\displaystyle}
\newcommand{\iu}{\mathrm{i}\mkern1mu}
\renewcommand{\i}{\mathrm{i}}
\begin{document}
\title[Complex Band Structure]{Complex Band Structure for Subwavelength Evanescent Waves}

\author[Y. De Bruijn]{Yannick De Bruijn}
\address{\parbox{\linewidth}{Yannick De Bruijn\\
Department of Mathematics, ETH Z\"urich, Rämistrasse 101, 8092 Z\"urich, Switzerland.}}
\email{ydebruijn@student.ethz.ch / yannick.debruijn@yale.edu}

\author[E. O. Hiltunen]{Erik Orvehed Hiltunen}
\address{\parbox{\linewidth}{Erik Orvehed Hiltunen\\
Department of Mathematics, University of Oslo, Moltke Moes vei 35, 0851 Oslo, Norway.}}
\email{erikhilt@math.uio.no}

\begin{abstract}
    We present the mathematical and numerical theory for evanescent waves in subwavelength band gap materials. We begin in the one-dimensional case, whereby fully explicit formulas for the complex band structure, in terms of the capacitance matrix, are available. As an example, we show that the gap functions can be used to accurately predict the decay rate of the interface mode of a photonic analogue of the SSH-model. In two dimensions, we derive the band gap Green's function and characterise the subwavelength gap functions via layer potential techniques. By generalising existing lattice-summation techniques, we illustrate our results numerically by computing the complex band structure in a variety of settings.
\end{abstract}
\maketitle
\vspace{3mm}
\noindent
\textbf{Mathematics Subject Classification (MSC2010): 35J05, 35C20, 35P20.}

\vspace{3mm}
\noindent
\textbf{Keywords.}
Subwavelength resonances, evanescent modes, band gap, interface eigenmodes, complex band structure, complex Brillouin zone, layer potentials.\\

\vspace{5mm}

\section{Introduction}
Band gap materials have a fundamental role in the engineering of wave-control devices, due to their ability to act as insulators for waves of certain frequencies. This phenomenon is typically seen in spatially periodic media, whereby waves at band gap frequencies are evanescent (exponentially decaying), and is the key mechanism which underpins the recent developments of wave localisation, guiding, and topological robustness \cite{ammari2018subwavelength,lemoult2016soda,yves2017crytalline,fefferman2014topologically,fefferman2018honeycomb}. A fundamental quantity is the \emph{decay rate} of evanescent waves, which controls the energy leakage of localised or guided waves and can be represented through a complex quasimomenta, giving rise to an extended band structure known as the \emph{complex} band structure \cite{goodwin1939electronic,dang2014complex,reuter2016unified,chang1982complex,tomfohr2002complex}. Although its importance has long been recognised for applications, there are relatively few quantitative mathematical results for computing complex band structures of periodic media, where key attention has instead been on (regular) band structures.  

Recently, the interaction of waves with objects of subwavelength scale has been thoroughly studied due to the ability to overcome the diffraction limit. One way to achieve subwavelength interactions is to use high-contrast metamaterials, which are media constituted by the insertion of a set of highly contrasted resonators into a background medium.  Air bubbles in water was the first demonstrated example of such resonance \cite{minnaert1933musical,ammari2018minnaert,meklachi2018asymptotic}, and it has since been observed in a variety of other settings, such as high-contrast dielectric particles \cite{ammari2019dielectric,ammari2023mathematical}, Helmholtz resonators \cite{ammari2015superresolution}, and plasmonic nanoparticles \cite{ammari2017plasmonicscalar,ammari2016plasmonicMaxwell}. Evanescent waves have been found to contribute towards enhanced near-field transmission, with key applications to subwavelength imaging beyond the diffraction limit (see, for example, \cite{luo2003subwavelength, engelen2009subwavelength,lezec2004diffracted,sukhovich2009experimental,belov2005canalization}).

The field of subwavelength resonant metamaterials has gained significant recent interest, and key phenomena such as band gap opening, distinct topological phases, and non-Hermitian skin effect have been characterised in terms of a discrete capacitance matrix formulation \cite{ammari2023functional}. Although the propagation of waves with frequencies inside the bulk bands has been thoroughly studied, significantly less is known about the propagation of waves at band gap frequencies. Numerical methods are mainly based on Green's functions \cite{chaldyshev1975application,jandieri20191} or plane-wave expansions \cite{EvanescentWaves}.  
In the present work, we introduce a mathematical framework that allows us to characterise the behaviour of waves at band gap frequencies and formally explain the results found in \cite{Edge_Modes, BubbleMeta, BandGapBubbly, doi:10.1137/22M1503841}. We extend the work of \cite{movchan2007band}, wherein a band gap Green's function approach is used to analytically determine the decay length of localised waves, by characterising resonant frequencies of the Helmholtz problem for frequencies inside the band gap. This is achieved by allowing the Floquet-Bloch quasimomentum to become complex. We transform the complex Bloch condition into regular (real) Bloch condition. By introducing a band gap Green's function, we are able to generalise the integral-operator techniques of \cite{ammari2023functional} and characterise the complex band structure as a nonlinear eigenvalue problem for an integral operator. Coupled with the rigorous eigenvalue perturbation techniques of Gohberg and Sigal \cite{gohberg1971operator}, we obtain a capacitance formulation for evanescent waves.

In one dimension, transfer matrix techniques allow simple and explicit formulas for the complex quasimomentum as a function of the frequency. The magnitudes of the eigenvalues of the transfer matrix directly yield the decay rate of the solution to the wave equation. This way, we can find exponentially decaying Bloch modes for frequencies $\omega$ inside the band gap. In the subwavelength regime, this formulation can further be simplified by the capacitance matrix formulation, where the subwavelength band functions are given by the eigenvalues of the (complex) quasiperiodic capacitance matrix. As a concrete application of our method, we consider a subwavelength analogue of the SSH model (originally introduced in \cite{SSH}, see also \cite{lin2022mathematical,ammari2024exponentially}). We show that the complex band structure, which is computed solely in terms of the bulk structure and is independent of the interface, accurately predicts the decay rate of the interface mode.

In a two-dimensional lattice of resonators, the before-mentioned transfer matrix techniques are no longer applicable, and the theory is significantly more challenging. Nevertheless, through a careful application of integral-operator techniqes, we extend the theory of subwavelength resonators to complex quasimomenta and provide robust numerical techniques that allow us to compute the band functions for frequencies inside the band gap, for any direction of decay. In both one- and two-dimensional cases, we illustrate our results numerically. 

This paper is organised as follows. In Section \ref{sec: One dimensional resonator} we treat a one-dimensional resonator system and present explicit formulas for the computation of the band function for frequencies inside the band gap. We derive explicit decay rates  of the gap modes as a function of the frequency. In Section \ref{sec: Two dimensional resonator} we treat a two-dimensional lattice of resonators. We generalise the layer potential theory for solving the boundary problem to the case of complex Bloch quasimomenta and introduce a band gap Green's function. Moreover, we provide a numerical scheme for computing the band functions associated to complex quaimomenta.
In Section \ref{sec: Conclusion} we discuss possible application areas of these new results and future directions of study.


\section{One-dimensional crystals} \label{sec: One dimensional resonator}
In this section, we will explore the propagation of waves inside a one-dimensional resonator chain at band gap frequencies. We will consider two regimes, starting with the subwavelength regime, where we can find explicit expressions for band gap functions. Analytical formulas can be obtained in the general setting using the transfer matrix approach, and we illustrate the results numerically.

\subsection{Setting and problem formulation}
Let us start by introducing the model setting. We consider a one-dimensional infinite chain of resonators \(D := (x_i^{\iL}, x_i^{\iR}\)), where \((x_i^{\iL, \mathrm R})_{i \geq 1} \subset \R\) are the extremities such that $x_i^{\iL} < x_i^{\iR} < x_{i+1}^{\iL}$ for any $i \geq 1$. The lengths of the $i$-th resonators will be denoted by $\ell_i = x_i^{\iR}-x_i^{\iL}$ and the spacings between the $i$-th and the $(i+1)$-th resonators will be denoted by $s_i = x_{i+1}^{\iL}-x_i^{\iR}$. Furthermore, we assume that the resonators are arranged in an $N$-periodic way, that is, $s_{i+N+1} = s_i$, or in other words, the unit cell contains $N$ resonators. In the remainder of this section, we will analyse the cases $N = 1, 2$, which means a single resonator in the unit cell and two resonators inside the unit cell (commonly referred to as a dimer). The resulting system is illustrated in Figure \ref{fig:setting}. The (real) Brillouin zone $Y^*$ is defined as $Y^*:= \R/ (\tfrac{2\pi}{L})\Z$. For band gap modes, the complex Brillouin zone is given by $Y^*+ \iu\R$. 

\begin{figure}[htb]
    \centering
    \begin{adjustbox}{width= 12cm} 
    \begin{tikzpicture}
        \coordinate (x1l) at (1,0);
        \path (x1l) +(1,0) coordinate (x1r);
        \coordinate (s0) at (0.5,0.7);
        \path (x1r) +(0.75,0.7) coordinate (s1);
        
        \path (x1r) +(1.5,0) coordinate (x2l);
        \path (x2l) +(1,0) coordinate (x2r);
        \path (x2r) +(0.5,0.7) coordinate (s2);
        \path (x2r) +(1,0) coordinate (x3l);
        \path (x3l) +(1,0) coordinate (x3r);
        \path (x3r) +(1,0.7) coordinate (s3);
        \path (x3r) +(2,0) coordinate (x4l);
        \path (x4l) +(1,0) coordinate (x4r);
        \path (x4r) +(0.4,0.7) coordinate (s4);
        \path (x4r) +(1,0) coordinate (dots);
        \path (dots) +(1,0) coordinate (x5l);
        \path (x5l) +(1,0) coordinate (x5r);
        \path (x2r) +(1.2,0) coordinate (x6l);
        \path (x5r) +(0.875,0.7) coordinate (s5);
        \path (x6l) +(1,0) coordinate (x6r);
        \path (x6r) +(1.25,0) coordinate (x7l);
        \path (x6r) +(0.57,0.7) coordinate (s6);
        \path (x7l) +(1,0) coordinate (x7r);
        \path (x7r) +(1.5,0) coordinate (x8l);
        \path (x7r) +(0.75,0.7) coordinate (s7);
        \path (x8l) +(1,0) coordinate (x8r);
        \coordinate (s8) at (12,0.7);

        \draw[dotted, line cap=round, line width=1pt, dash pattern=on 0pt off 2\pgflinewidth] ($(x1l) - (1.5, 0)$) -- ($(x1l)- (1.1, 0)$);
        \draw ($(x1l)- (1, 0)$) -- ++(0,1.2);

        \draw[dotted,|-|] ($(x1l)- (1, -0.25)$) -- ($(x1l) + (0, 0.25)$);
        
        \draw[ultra thick] (x1l) -- (x1r);
        \node[anchor=north] (label1) at (x1l) {$x_1^{\iL}$};
        \node[anchor=north] (label1) at (x1r) {$x_1^{\iR}$};
        \node[anchor=south] (label1) at ($(x1l)!0.5!(x1r)$) {$\ell_1$};
        \draw[dotted,|-|] ($(x1r)+(0,0.25)$) -- ($(x2l)+(0,0.25)$);
        \draw[ultra thick] (x2l) -- (x2r);
        \node[anchor=north] (label1) at (x2l) {$x_2^{\iL}$};
        \node[anchor=north] (label1) at (x2r) {$x_2^{\iR}$};
        \node[anchor=south] (label1) at ($(x2l)!0.5!(x2r)$) {$\ell_2$};
        \draw[dotted,|-|] ($(x2r)+(0,0.25)$) -- ($(x3l)+(0,0.25)$);
        \draw[dotted, line cap=round, line width=1pt, dash pattern=on 0pt off 2\pgflinewidth] ($(x3l)+(0.4,0)$) -- ($(x6r)-(0.4,0)$);

        \draw[dotted,|-|] ($(x6r)+(0,0.25)$) -- ($(x7l)+(0,0.25)$);
        \draw[ultra thick] (x7l) -- (x7r);
        \node[anchor=north] (label1) at (x7l) {$x_{N-1}^{\iL}$};
        \node[anchor=north] (label1) at (x7r) {$x_{N-1}^{\iR}$};
        \node[anchor=south] (label1) at ($(x7l)!0.5!(x7r)$) {$\ell_{N-1}$};
        \draw[dotted,|-|] ($(x7r)+(0,0.25)$) -- ($(x8l)+(0,0.25)$);
        \draw[ultra thick] (x8l) -- (x8r);
        \draw[dotted,|-|] ($(x8r)+(0,0.25)$) -- ($(x8r)+(1,0.25)$);
        \draw ($(x8r)+ (1,0)$) -- ++(0,1.2);  

        \draw[<->] ($(x1l) + (-0.8,1)$) -- ($(x8r) + (0.8,1)$) node[midway, above] {$L$};
    
        \draw[dotted, line cap=round, line width=1pt, dash pattern=on 0pt off 2\pgflinewidth] ($(x8r)+(1.1, 0)$) -- ($(x8r) + (1.5, 0)$);
        \node[anchor=north] (label1) at (x8l) {$x_{N}^{\iL}$};
        \node[anchor=north] (label1) at (x8r) {$x_{N}^{\iR}$};
        \node[anchor=south] (label1) at ($(x8l)!0.5!(x8r)$) {$\ell_N$};
        \node[anchor=north] (label1) at (s0) {$s_1$};
        \node[anchor=north] (label1) at (s1) {$s_2$};
        \node[anchor=north] (label1) at (s2) {$s_3$};

        \node[anchor=north] (label1) at (s6) {$s_{N-1}$};
        \node[anchor=north] (label1) at (s7) {$s_{N}$};
        \node[anchor=north] (label1) at (s8) {$s_{N+1}$};
    \end{tikzpicture}
    \end{adjustbox}
    \caption{Unit cell of an infinite chain of subwavelength resonators, with lengths
    $(\ell_i)_{1\leq i\leq N}$ and spacings $(s_{i})_{1\leq i\leq N+1}$.}
    \label{fig:setting}
\end{figure}
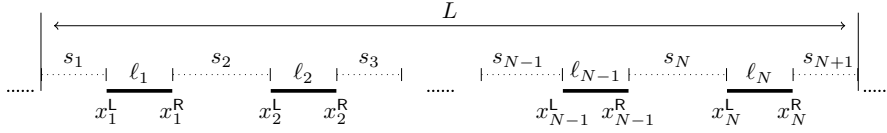

Let $D$ be the domain consisting of the union of all resonators inside $Y$
\begin{equation}
D_i = (x_i^{\iL}, x_i^{\iR}), \quad 
    D = \bigcup_{i=1}^N (x_i^{\iL}, x_i^{\iR}) \subseteq Y.
\end{equation}
We use $v_i$ to denote the wave speed in resonator $D_i$,  while $v$ is the wave speed in the background medium. We consider the Helmholtz scattering problem with transmission boundary conditions across $D$, given by the following system:
\begin{equation}\label{eq: wave equation}
\begin{cases}\frac{\mathrm{d}^2}{\mathrm{d}x^2}u(x)+\frac{\omega^2}{v^2} u=0, & x \in Y \backslash D, \\ \frac{\mathrm{d}^2}{\mathrm{d}x^2}u(x)+\frac{\omega^2}{v_i^2} u=0, & x \in D_i, \\ \left.u\right|_{\iR}\left(x_i^{\iL, \iR}\right)-\left.u\right|_{\iL}\left(x_i^{\iL, \iR}\right)=0, & \text {for all } 1 \leq i \leq N, \\ \left.\frac{\mathrm{d} u}{\mathrm{~d} x}\right|_{\iR}\left(x_i^{\iL}\right)=\left.\delta \frac{\mathrm{d} u}{\mathrm{~d} x}\right|_{\iL}\left(x_i^{\iL}\right), & \text {for all } 1 \leq i \leq N, \\  \left.\frac{\mathrm{d} u}{\mathrm{~d} x}\right|_{\iL}\left(x_i^{\iR}\right)=\left. \delta \frac{\mathrm{d} u}{\mathrm{~d} x}\right|_{\iR}\left(x_i^{\iR}\right), & \text {for all } 1 \leq i \leq N, \\ 
u(x+L) = u(x)e^{\i (\alpha+\i\beta)L}, &\text{for some }\alpha \in Y^*, \beta \in \R.
\end{cases}
\end{equation}
The last condition of \eqref{eq: wave equation} is the Floquet-Bloch condition with \emph{complex} quasimomentum $k= \alpha+\i\beta$ for $\alpha \in Y^*$ and $\beta \in \R$.  The Floquet condition becomes
\begin{align}
    u(x+ L) &= e^{\i (\alpha + \i\beta)L}u(x)\\
    &=e^{-\beta L} e^{\i \alpha L}u(x).
\end{align}
The term $e^{-\beta L}$ governs the decay of the Bloch mode across one unit cell. If $\beta\neq 0$, the solution will be exponentially decaying in one direction. 

The values $\omega = \omega^{\alpha, \beta}$ for which there are nonzero solutions of \eqref{eq: wave equation} are called \emph{resonant frequencies}, or, seen as a function of $\alpha$ with $\beta = 0$, \emph{band functions}. The collection of band functions is called the \emph{band structure}. The complement of the band is called a \emph{band gap}. In the next section, we will establish that resonant frequencies inside the band gap, called \emph{band gap functions} or \emph{gap functions} can be found by allowing $\beta \neq 0$.

\subsection{Subwavelength regime}
The \emph{capacitance matrix} is a well-known tool to study subwavelength resonance (see, for example, \cite{ammari.fitzpatrick.ea2018Mathematical,ammari2023functional,Edge_Modes}). We will revisit the key concepts to characterise subwavelength resonant frequencies and generalise many of those results to the case of complex quaimomenta.
\begin{definition}[Subwavelength resonant frequency]
    Given $\delta > 0$, a subwavelength resonant frequency $\omega = \omega(\delta)$ is defined to be such that
    \begin{enumerate}[(i)]
    \item there exists a nontrivial solution to $(\ref{eq: wave equation})$.
    \item $\omega$ depends continuously on $\delta$ and satisfies $\omega(\delta) \xrightarrow{\delta \to 0} 0$.
    \end{enumerate}
\end{definition}

\subsubsection{Capactiance matrix for complex quasimomenta}
In \cite[Section 4.3]{Edge_Modes}, the subwavelength resonant frequencies of \eqref{eq: wave equation} have been characterised in the case $\beta = 0$. We now introduce a complex quasimomentum $k = \alpha + \i\beta$, for a general $\beta \in \R$, and generalise these results.
\begin{definition}[Quasiperiodic capacitance matrix] Consider solutions $V^{\alpha, \beta}_i$ $:\mathbb{R} \to \R$ of the problem
\begin{equation}\label{eq: chain equation}
    \begin{cases}
        \frac{\mathrm{d}^2}{\mathrm{d}x^2}V^{\alpha, \beta}_i, = 0, & x\in Y \setminus D,\\
        V^{\alpha, \beta}_i(x) = \delta_{ij}, & x \in D_j, \\
        V^{\alpha, \beta}_i(x + mL) = e^{\i (\alpha + \i\beta)mL}V^{\alpha, \beta}_i(x), & m \in \Z,
    \end{cases}
\end{equation}
for $1 \leq i, j \leq N.$ Then the quasiperiodic capacitance matrix is defined coefficientwise by
\begin{equation}\label{eq: cap}
    {C}^{\alpha, \beta}_{ij} = - \int_{\partial D_i} \frac{\partial V^{\alpha, \beta}_j}{\partial \nu}\mathrm{d} \sigma,
\end{equation}
    where $\nu$ is the outward-pointing normal.
\end{definition}

\begin{lemma}
The capacitance matrix for a one-dimensional resonator chain is given by
\begin{equation}\label{def: capacitance_matrix}
    {C}^{\alpha, \beta}_{ij} = -\frac{1}{s_{j-1}}\delta_{i(j-1)} + \left(\frac{1}{s_{j-1}} + \frac{1}{s_j}\right)\delta_{ij} - \frac{1}{s_j}\delta_{i(j+1)} - \delta_{1j}\delta_{iN} \frac{e^{-\i (\alpha + \i\beta) L}}{s_N} - \delta_{1i}\delta_{jN} \frac{e^{\i (\alpha + \i\beta) L}}{s_N}.
\end{equation}
\end{lemma}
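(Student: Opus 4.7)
The plan is to exploit the one-dimensional structure: since $V_j^{\alpha,\beta}$ is harmonic in the background medium, the equation $V'' = 0$ forces $V_j^{\alpha,\beta}$ to be piecewise affine on $Y \setminus D$. Its whole profile is therefore determined by the Dirichlet data $V_j^{\alpha,\beta} = \delta_{jk}$ on each $D_k$ prescribed in \eqref{eq: chain equation}, together with the quasiperiodic matching across the period boundary.

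First I would compute the slope of $V_j^{\alpha,\beta}$ on each spacing. On the $N-1$ interior spacings $(x_k^{\iR}, x_{k+1}^{\iL})$ of length $s_k$, linear interpolation between the boundary values $\delta_{jk}$ and $\delta_{j,k+1}$ gives slope $(\delta_{j,k+1} - \delta_{jk})/s_k$. On the single wraparound spacing of length $s_N$, extended as $(x_N^{\iR}, x_1^{\iL} + L)$, the quasiperiodicity from \eqref{eq: chain equation} supplies the right endpoint value $e^{\i(\alpha+\i\beta)L}\delta_{j1}$, producing slope $(e^{\i(\alpha+\i\beta)L}\delta_{j1} - \delta_{jN})/s_N$. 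An equivalent calculation extending instead to the left of $D_1$ gives a slope involving $e^{-\i(\alpha+\i\beta)L}$; the two are consistent because the derivative inherits the same quasiperiodic scaling.

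Next, in 1D the boundary integral in \eqref{eq: cap} reduces to evaluating $\partial V_j^{\alpha,\beta}/\partial\nu$ at the two endpoints $x_i^{\iL}, x_i^{\iR}$, with outward normals $-\hat{x}$ and $+\hat{x}$. Plugging in the slopes from the exterior side, summing, and multiplying by $-1$ gives $C_{ij}^{\alpha,\beta}$ as a sum of tridiagonal Kronecker-delta contributions, plus, depending on $i$, a single wraparound term. A clean organisation is to split into three cases: for $2 \leq i \leq N-1$ only the tridiagonal piece appears, whereas $i = 1$ and $i = N$ each pick up exactly one wraparound contribution, yielding the two correction terms $-\delta_{1j}\delta_{iN}\, e^{-\i(\alpha+\i\beta)L}/s_N$ and $-\delta_{1i}\delta_{jN}\, e^{\i(\alpha+\i\beta)L}/s_N$ respectively. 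Collecting all contributions gives \eqref{def: capacitance_matrix}.

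The main obstacle is purely bookkeeping rather than conceptual: one must consistently track signs from the outward-normal convention, and consistently choose one extension of the wraparound interval, so that the two phase factors $e^{\pm\i(\alpha+\i\beta)L}$ appear with the correct signs in the two boundary terms. A useful internal consistency check is that the resulting matrix is Hermitian when $\beta = 0$ and reduces at $\alpha = \beta = 0$ to the real periodic capacitance matrix already known from \cite{Edge_Modes}.
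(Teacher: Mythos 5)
Your proposal is correct and follows essentially the same route as the paper: write down the explicit piecewise-affine solutions of \eqref{eq: chain equation}, use quasiperiodicity to supply the values on the wraparound spacing, and reduce \eqref{eq: cap} to evaluating one-sided derivatives at the resonator endpoints with the outward-normal sign convention. Your version simply spells out the ``direct calculation'' that the paper leaves implicit, including the bookkeeping of which of $e^{\pm\i(\alpha+\i\beta)L}$ lands on which corner entry.
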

\begin{proof}
    The solutions to $(\ref{eq: chain equation})$ for $2 \leq i \leq N-1$ are given by
    \begin{equation}
        V^{\alpha, \beta}_i(x) = \begin{cases}
            \frac{1}{s_{i-1}}(x-x_i^\iL), & x_{i-1}^\iR \leq x \leq x^\iL_i, \\
            1, & x_i^\iL \leq x \leq x_i^\iR,\\
            -\frac{1}{s_i}(x-x^\iL_{i+1}), & x_i^\iR \leq x \leq x_{i+1}^\iL, \\
            0, & \text{else},
         \end{cases}
    \end{equation}
 while for the bigger and smaller $i$ we multiply by the corresponding $e^{\i (\alpha + \i\beta)mL}$-factor. In the one-dimensional case, \eqref{eq: cap} just means
 \begin{equation}
     {C}^{\alpha, \beta}_{ij} = - \left(-\left.\frac{\mathrm{d} V^{\alpha, \beta}_i}{\mathrm{d}x}\right|_\iL(x^\iL_j) + \left. \frac{\mathrm{d} V^{\alpha, \beta}_i}{\mathrm{d}x}\right|_\iR(x_j^\iR) \right).
 \end{equation}
 The result now follows from a direct calculation.
\end{proof}
To take into account the different material parameters inside the resonators, we introduce the generalised capacitance matrix
\begin{equation}\label{eq: generalised capacitance}
    \mathcal{C}^{\alpha, \beta} = W {C}^{\alpha, \beta},
\end{equation}
where the weight matrix $W$ is diagonal and given by $W =  \operatorname{diag}(v_i^2/l_i)$.

In the one-dimensional case, the method of \cite{Edge_Modes} can directly be extended to the case of complex quasimomenta to yield a capacitance-matrix formulation of the subwavelength gap functions. Specifically, we have the following result \cite[Proposition 4.9]{Edge_Modes}.

\begin{proposition}\label{prop: subwavelength band functions}
Assume that the eigenvalues of the generalised capacitance matrix $ \mathcal{C}^{\alpha, \beta}$ are simple. Then, there are $N$ subwavelength band functions $(\alpha, \beta) \mapsto \omega_i^{\alpha, \beta}$ which satisfy
    \begin{equation} \label{eq:sqrt}
        \omega^{\alpha, \beta}_i = \sqrt{\delta \lambda^{\alpha, \beta}_i} + \mathcal{O}(\delta),
    \end{equation}
    where $(\lambda^{\alpha,\beta}_i)_{1 \leq i \leq N}$ are the eigenvalues of $ \mathcal{C}^{\alpha, \beta}$. Here, we choose the branch of \eqref{eq:sqrt} with positive imaginary parts.
\end{proposition}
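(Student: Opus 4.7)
The strategy is to adapt the capacitance-matrix argument of \cite[Proposition 4.9]{Edge_Modes}, which treats the case $\beta = 0$, by observing that the asymptotic analysis is algebraically identical when the Floquet condition is allowed to be complex. The quasimomentum enters only through the boundary values of $u$ on the quasiperiodic images of $D_1$ and $D_N$, which contribute the factors $e^{\pm \i (\alpha + \i \beta) L}$ in $\mathcal{C}^{\alpha,\beta}$. The first step is to construct a leading-order ansatz for a solution $u$ of \eqref{eq: wave equation} as $\delta, \omega \to 0$ under the scaling $\omega^2 = \mathcal{O}(\delta)$. Inside each resonator $D_i$, the equation $u'' + (\omega/v_i)^2 u = 0$ together with the transmission conditions (the inside derivative equals $\delta$ times the outside derivative) forces $u$ to be constant to leading order, so $u(x) = u_i + \mathcal{O}(\omega^2 + \delta)$ on $D_i$. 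Between resonators, $u'' + (\omega/v)^2 u = 0$ makes $u$ piecewise linear to leading order, interpolating the constants $u_i$ and $u_{i+1}$, while the complex Bloch condition identifies the quasiperiodic images through $u_0 = e^{-\i k L} u_N$ and $u_{N+1} = e^{\i k L} u_1$ with $k = \alpha + \i \beta$. Equivalently, this ansatz reads $u \approx \sum_{i=1}^N u_i V_i^{\alpha,\beta}$, where the $V_i^{\alpha,\beta}$ are defined in \eqref{eq: chain equation}.

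The second step reduces the problem to a finite-dimensional eigenvalue equation for $\mathbf{u} = (u_1, \dots, u_N)$. Integrating $u'' + (\omega/v_i)^2 u = 0$ over $D_i$, using the transmission conditions to replace inside derivatives by $\delta$ times outside derivatives, and evaluating the outside derivatives as the finite differences $(u_{i+1} - u_i)/s_i$ and $(u_i - u_{i-1})/s_{i-1}$ dictated by the piecewise-linear structure, gives
\begin{equation*}
    \delta (C^{\alpha,\beta} \mathbf{u})_i = \frac{\omega^2 \ell_i}{v_i^2} u_i + \mathcal{O}(\omega^4 + \delta \omega^2),
\end{equation*}
where $C^{\alpha,\beta}$ is the quasiperiodic capacitance matrix of \eqref{def: capacitance_matrix}, with boundary entries inheriting the factors $e^{\pm \i k L}$ from the Bloch identification of $u_0$ and $u_{N+1}$. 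Multiplying by the weight matrix $W = \operatorname{diag}(v_i^2/\ell_i)$ converts this into
\begin{equation*}
    \delta \mathcal{C}^{\alpha,\beta} \mathbf{u} = \omega^2 \mathbf{u} + \text{higher order terms},
\end{equation*}
whose leading-order eigenvalues are $\omega^2 = \delta \lambda_i^{\alpha,\beta}$.

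The third step is to promote this formal expansion to a rigorous statement. Under the hypothesis that the $\lambda_i^{\alpha,\beta}$ are simple, one applies analytic perturbation theory, or equivalently the Gohberg--Sigal theorem in the spirit of \cite{ammari2023functional}, to the characteristic operator family associated with the boundary value problem. This yields $N$ analytic branches of resonances $\omega_i^{\alpha,\beta}$ with the expansion $\omega_i^{\alpha,\beta} = \sqrt{\delta \lambda_i^{\alpha,\beta}} + \mathcal{O}(\delta)$; the branch of the square root is fixed by requiring nonnegative imaginary part, so that, under the $e^{-\i \omega t}$ time convention, the associated time-harmonic modes decay rather than grow.

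The principal difficulty relative to the real-quasimomentum case of \cite{Edge_Modes} is that $\mathcal{C}^{\alpha,\beta}$ fails to be Hermitian under the natural $W^{1/2}$-symmetrisation once $\beta \neq 0$, so its spectrum may be genuinely complex and its diagonalisability cannot be taken for granted. This is precisely where simplicity of the eigenvalues becomes essential: it guarantees that the spectral projectors depend analytically on $(\alpha, \beta, \delta)$ in a neighbourhood, which lets the implicit function theorem (or, equivalently, Gohberg--Sigal) extract $N$ distinct analytic branches of resonances with the stated $\mathcal{O}(\delta)$ error, without any appeal to self-adjointness.
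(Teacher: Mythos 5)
Your argument is correct and follows essentially the same route the paper relies on: the paper gives no proof of its own here, simply citing \cite[Proposition 4.9]{Edge_Modes} and asserting that the real-quasimomentum capacitance argument extends verbatim to complex $k=\alpha+\i\beta$, which is exactly what you carry out (constant-on-resonators ansatz, piecewise-linear exterior profile, flux balance yielding $\omega^2\mathbf{u}=\delta\,\mathcal{C}^{\alpha,\beta}\mathbf{u}$ to leading order, then Gohberg--Sigal to make the branches rigorous). Your closing observation that $\mathcal{C}^{\alpha,\beta}$ loses Hermitian symmetry for $\beta\neq 0$, so that simplicity of the eigenvalues must be assumed rather than derived, is a worthwhile point that the paper leaves implicit.
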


\subsubsection{Single resonator inside the unit cell}
We begin with the simplest case of a single resonator inside the unit cell $Y$. For simplicity, we take resonator length $l = 1$ and wave speed $v=v_1=1$.  We construct the capacitance matrix using $(\ref{def: capacitance_matrix})$,
\begin{equation}
    {C}^{\alpha, \beta} = \frac{2}{s_1}\bigl(1 - \cos(\alpha L) \cosh(\beta L) - \i\sin(\alpha L) \sinh(\beta L) \bigr).
\end{equation}
Since the band functions have to be real, we find that 
\begin{equation}
    \sin(\alpha L)\sinh(\beta L) = 0.
\end{equation}
This holds if and only if $\alpha = \frac{\pi}{L}n \text{ for some } n \in \Z$ or $\beta = 0$. 

Since the capacitance matrix is a scalar, its eigenvalues correspond to the scalar itself. Using the condition that the band functions have to be real-valued, we find
\begin{align}
    \omega_1^{\alpha, 0} &= \sqrt{\delta \frac{2}{s_1}\bigl(1-\cos(\alpha L)\bigr)},\label{eq: single_real}\\
    \omega_1^{\frac{\pi}{L}, \beta}&= \sqrt{\delta \frac{2}{s_1}\bigl(1+\cosh(\beta L)\bigr)}.\label{eq: single imag}
\end{align}
It is not hard to see that $(\ref{eq: single_real})$ is real-valued for every $\alpha$ in the Brillouin zone while $(\ref{eq: single imag})$ is real for every $\beta \in \R$.

\subsubsection{Resonator dimer inside the unit cell} \label{sec: 1D dimer} We will consider a unit cell containing two resonators (commonly referred to as a \emph{dimer}). For simplicity, we again take resonator length $l = 1$ and denote the spacing between the resonators by $s_1$ and $s_2$, respectively. We also take $v=v_1=v_2=1$. The length of the unit cell is denoted by $L = 2l + s_1 + s_2$.
Following the definition of the quasiperiodic capacitance matrix $(\ref{def: capacitance_matrix})$ for two resonators in the unit cell, we find
\begin{equation}\label{eq: dimer_capacitance}
    {C}^{\alpha, \beta} = \begin{pmatrix}
        \frac{1}{s_1} + \frac{1}{s_2} & -\frac{1}{s_1}- \frac{e^{-\i (\alpha + \i\beta)L}}{s_2} \\
        -\frac{1}{s_1}- \frac{e^{\i (\alpha + \i\beta)L}}{s_2} & \frac{1}{s_1} + \frac{1}{s_2}
    \end{pmatrix}.
\end{equation}
The band functions are characterised via  Proposition $\ref{prop: subwavelength band functions}$. We start by computing the eigenvalues for the dimer capacitance matrix $(\ref{eq: dimer_capacitance})$,
\begin{align}
    \lambda(\alpha, \beta) &=  \frac{1}{s_1} + \frac{1}{s_2} \pm \sqrt{\left( \frac{1}{s_1} + \frac{e^{-\i (\alpha + \i\beta)L}}{s_2}\right)\left(\frac{1}{s_1} + \frac{e^{\i (\alpha + \i\beta)L}}{s_2}\right)}\\
    &=  \frac{1}{s_1} + \frac{1}{s_2} \pm \sqrt{\frac{1}{s_1^2} + \frac{1}{s_2^2} + \frac{2}{s_1s_2}\bigl(\cos(\alpha L)\cosh{(\beta L}) -\i\sin(\alpha L)\sinh(\beta L)\bigr)}.\label{eq: real eigenvalue}
\end{align}
By the same argument as in the previous section, we impose the condition that $\alpha = \frac{\pi}{L}n \text{ for some } n \in \Z$, or $\beta = 0$. We consider both cases separately.
\begin{itemize}
    \item For $\beta = 0$, we recover the ``classical'' band functions with band gaps. The eigenvalues of the capacitance matrix are given by
    \begin{equation}\label{eq: eigenvalue for beta = 0}
        \lambda(\alpha, \beta = 0) = \frac{1}{s_1} + \frac{1}{s_2} \pm \sqrt{\frac{1}{s_1^2} + \frac{1}{s_2^2} + \frac{2}{s_1s_2}\cos(\alpha L)},
    \end{equation}
    therefore, following Proposition \ref{prop: subwavelength band functions}, the two subwavelength band function are given up to order $\delta$ by
    \begin{align}
        \omega_1^{\alpha, 0} &= \sqrt{\delta \left(\frac{1}{s_1} + \frac{1}{s_2} - \sqrt{\frac{1}{s_1^2} + \frac{1}{s_2^2} + \frac{2}{s_1s_2}\cos(\alpha L)} \right)},\\
        \omega_2^{\alpha, 0} &= \sqrt{\delta \left(\frac{1}{s_1} + \frac{1}{s_2} + \sqrt{\frac{1}{s_1^2} + \frac{1}{s_2^2} + \frac{2}{s_1s_2}\cos(\alpha L)} \right)}.
    \end{align}
    \item For $\alpha \in \{0,\pm\frac{\pi}{L}\}$ we have band gap functions when $\beta \neq 0$. Because of $(\ref{eq: real eigenvalue})$, the eigenvalues of the capacitance matrix are of the form 
    \begin{equation}\label{eq: eigenvalue for alpha = pi}
        \lambda(\alpha, \beta) = \frac{1}{s_1} + \frac{1}{s_2} \pm \sqrt{\frac{1}{s_1^2} + \frac{1}{s_2^2} \pm \frac{2}{s_1s_2}\cosh{(\beta L})}.
    \end{equation}
    By Proposition \ref{prop: subwavelength band functions} we find four potential band functions
    \begin{align}
        \omega_1^{\frac{\pi}{L}, \beta} &= \sqrt{\delta \left( \frac{1}{s_1} + \frac{1}{s_2} - \sqrt{\frac{1}{s_1^2} + \frac{1}{s_2^2} - \frac{2}{s_1s_2}\cosh{(\beta L})}\right)} \label{eq: bandGap2},\\
        \omega_2^{\frac{\pi}{L}, \beta} &= \sqrt{\delta \left( \frac{1}{s_1} + \frac{1}{s_2} + \sqrt{\frac{1}{s_1^2} + \frac{1}{s_2^2} - \frac{2}{s_1s_2}\cosh{(\beta L})}\right)} \label{eq: bandGap1},\\
        \omega_3^{0, \beta} &= \sqrt{\delta \left( \frac{1}{s_1} + \frac{1}{s_2} + \sqrt{\frac{1}{s_1^2} + \frac{1}{s_2^2} + \frac{2}{s_1s_2}\cosh{(\beta L})}\right)}\label{eq: bandGap3},\\
        \omega_4^{0, \beta} &= \sqrt{\delta \left( \frac{1}{s_1} + \frac{1}{s_2} - \sqrt{\frac{1}{s_1^2} + \frac{1}{s_2^2} + \frac{2}{s_1s_2}\cosh{(\beta L})}\right)}. \label{eq: bandGap4}
    \end{align}
    To be admissible, these functions must be real-valued.  To see what values of $\beta$ are admissible, we check that the argument of the square root stays positive. For $\omega_1$ and $\omega_2$ this yields the  condition
    \begin{equation}
        \frac{1}{s_1^2} + \frac{1}{s_2^2} - \frac{2}{s_1s_2}\cosh(\beta L) \geq 0, 
    \end{equation}
which holds for all $\beta$ in the following interval:
\begin{equation}\label{eq: beta_domain}
    \beta \in \left[-\frac{1}{L} \operatorname{arcosh}{\left( \frac{1}{2} \frac{s_1^2 + s_2^2}{s_1 s_2}\right)}, \frac{1}{L} \operatorname{arcosh}{\left( \frac{1}{2} \frac{s_1^2 + s_2^2}{s_1 s_2}\right)}\right].
\end{equation}
For $\omega_3$ there is no condition on $\beta$ and, as a consequence, the band function is well-defined for any $\beta \in \R$. For $\omega_4$, we find that the expression is imaginary for any $\beta \neq 0$, thus not corresponding to a gap function.
We would like to highlight that the interval \eqref{eq: beta_domain} for $\beta$ does not depend on the contrast $\delta$. In other words, the decay inside the first band gap is bounded by a universal constant (given by $(\ref{eq: beta_domain})$) in the limit $\delta \to 0$. 
\end{itemize}

\subsubsection{Numerical computation}
In this section, we numerically illustrate the formulas for the band functions that we found in the previous section.
As expected for frequencies inside the band gap, we can always find a corresponding band gap function with $\beta \neq 0$.
\begin{figure}[h]
    \centering
    \subfloat[][Band structure for a single resonator inside the unit cell. Computation performed for $s_1 = 0.6$ and $\delta = 0.1$.]{{\includegraphics[width=6.5cm]{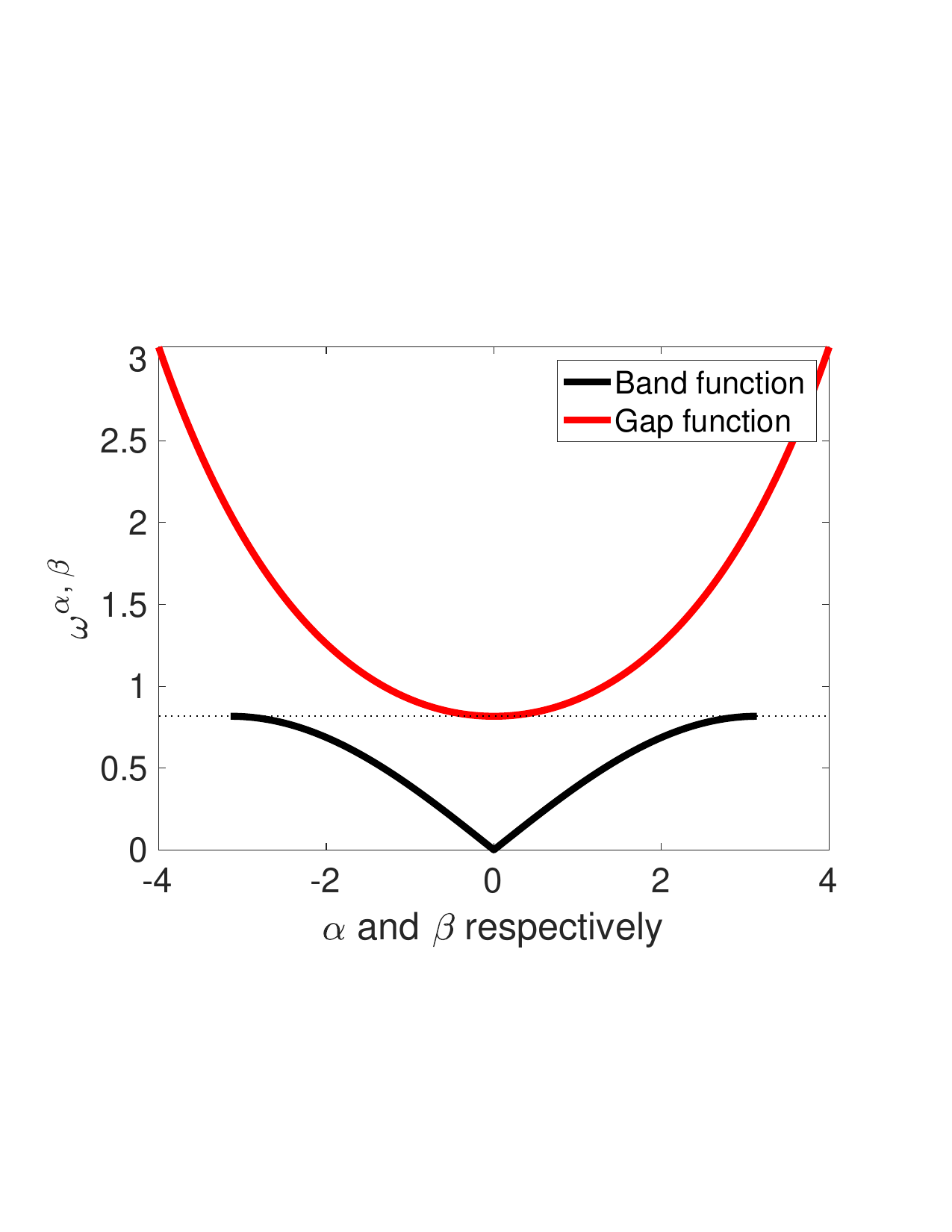} }}\qquad
    \subfloat[][Band structure for a dimer inside the unit cell. Computation performed with $s_1 = 0.8, s_2 = 2$ and $\delta = 0.1$. ]{{\includegraphics[width=6.5cm]{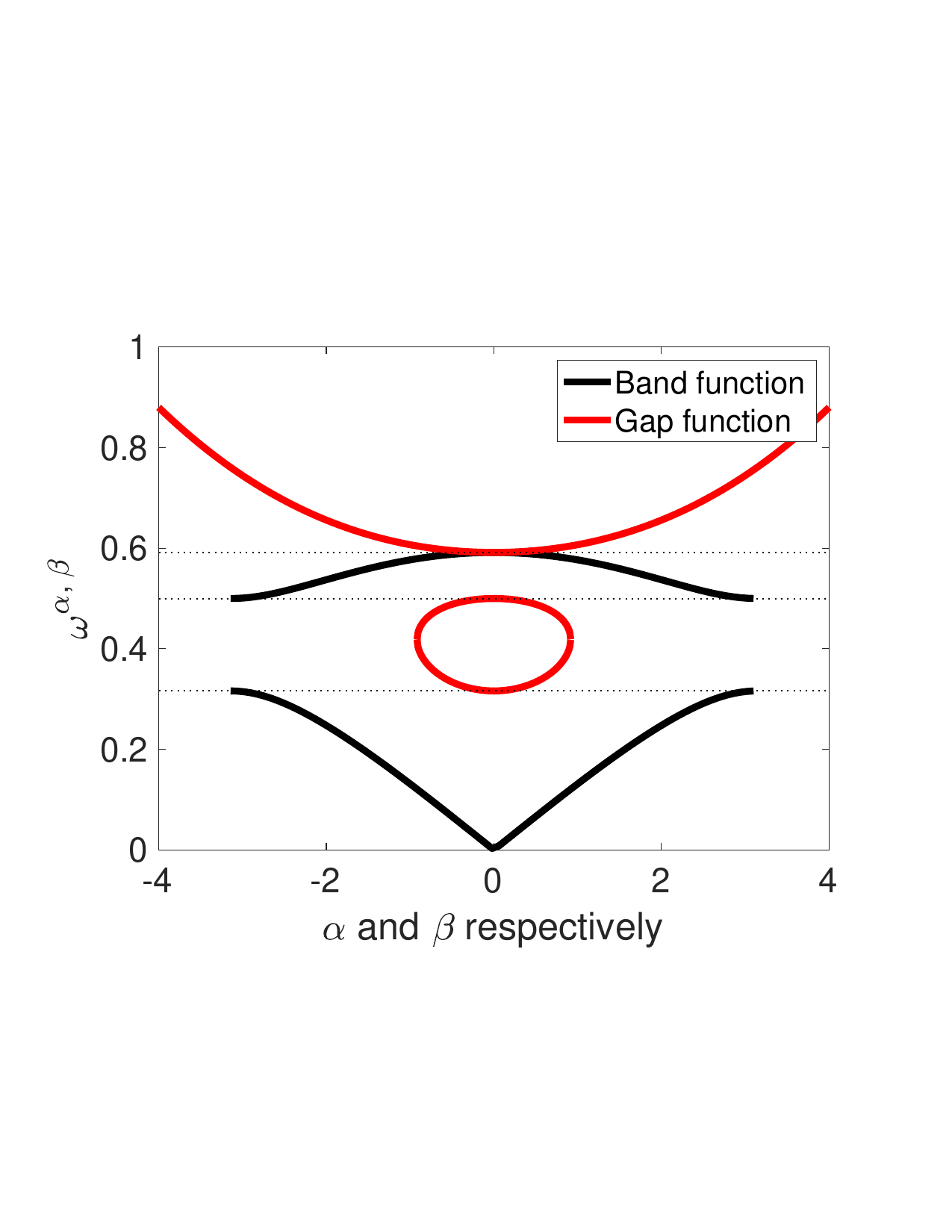} }}
    \caption{The band functions $\omega^{\alpha, 0}_i$ have been drawn over the first Brillouin zone, whereas the respective band gap functions $\omega^{\alpha, \beta}_i$ have been drawn over the real space or the interval $(\ref{eq: beta_domain})$.}
   \label{Fig: spectrum_dimer_system}
\end{figure}
At this point, it is important to emphasise that the band and band gap function cover the entire subwavelength frequency range. In particular, for any arbitrary frequency $\omega^*$ there exists a pair $\alpha$ and $\beta$ such that $\omega(\alpha, \beta) = \omega^*$.

\subsubsection{Application to exponentially localised interface modes}

In this section, we analyse a finite chain of dimers with a geometric defect at the centre. The eigenmode corresponding to the defect frequency displays an exponential decay away from the defect interface in both directions, commonly referred to as an interface mode. The structure can be seen as an analogue of the well-studied SSH model, and the existence of an interface mode was established in \cite{ammari2024exponentially}. Our goal is to show that the decay rate is governed by the complex quasimomentum associated to the gap eigenfrequency computed in \Cref{sec: 1D dimer}.

\begin{figure}[h]
    \centering
    \begin{adjustbox}{width=\textwidth}
        \begin{tikzpicture}
        \draw[-,thick,dotted] (-.5,-1) -- (-.5,2);
        \draw[|-|,dashed] (0,1) -- (1,1);
        \node[above] at (0.5,1) {$s_2$};
        \draw[ultra thick] (1,0) -- (2,0);
        \node[below] at (1.5,0) {$D_{2m+2}$};
        \draw[-,dotted] (1,0) -- (1,1);
        
        \draw[|-|,dashed] (2,1) -- (2.5,1);
        \node[above] at (2.25,1) {$s_1$};
        \draw[ultra thick] (2.5,0) -- (3.5,0);
        \node[below] at (3,0) {$D_{2m+3}$};
        \draw[-,dotted] (2.5,0) -- (2.5,1);
        \draw[-,dotted] (3.5,0) -- (3.5,1);
        \draw[-,dotted] (2,0) -- (2,1);
        
        \begin{scope}[shift={(+4,0)}]
        \draw[|-|,dashed] (-0.5,1) -- (0.5,1);
        \node[above] at (0,1) {$s_2$};
        \draw[ultra thick] (0.5,0) -- (1.5,0);
        \node[below] at (1,0) {$D_{2m+4}$};
        \draw[-,dotted] (0.5,0) -- (0.5,1);
        \node at (2.5,.5) {\dots};
        \end{scope}

        \begin{scope}[shift={(+6,0)}]
        \draw[ultra thick] (1,0) -- (2,0);
        \node[below] at (1.5,0) {$D_{4m-2}$};
        
        \draw[|-|,dashed] (2,1) -- (3,1);
        \node[above] at (2.5,1) {$s_2$};
        \draw[ultra thick] (3,0) -- (4,0);
        \node[below] at (3.5,0) {$D_{4m}$};
        \draw[-,dotted] (2,0) -- (2,1);
        \draw[-,dotted] (3,0) -- (3,1);
        \draw[-,dotted] (4,0) -- (4,1);
\begin{scope}[shift={(+2,0)}]
        \draw[|-|,dashed] (2,1) -- (2.5,1);
        \node[above] at (2.25,1) {$s_1$};
        \draw[ultra thick] (2.5,0) -- (3.5,0);
        \node[below] at (3,0) {$D_{4m+1}$};
        \draw[-,dotted] (2.5,0) -- (2.5,1);
        \end{scope}
        \end{scope}
        
\begin{scope}[shift={(-4,0)}]

        \draw[|-|,dashed] (0.5,1) -- (1,1);
        \node[above] at (0.75,1) {$s_1$};
        \draw[ultra thick] (1,0) -- (2,0);
        \node[below] at (1.5,0) {$D_{2m}$};
        \draw[-,dotted] (1,0) -- (1,1);
        
        \draw[|-|,dashed] (2,1) -- (3,1);
        \node[above] at (2.5,1) {$s_2$};
        \draw[ultra thick] (3,0) -- (4,0);
        \node[below,fill=white] at (3.5,0) {$D_{2m+1}$};
        \draw[-,dotted] (2,0) -- (2,1);
        \draw[-,dotted] (3,0) -- (3,1);
        \draw[-,dotted] (4,0) -- (4,1);
        \end{scope}
        
        \begin{scope}[shift={(-8,0)}]
        \node at (.5,.5) {\dots};
        \draw[ultra thick] (1.5,0) -- (2.5,0);
        \node[below] at (2,0) {$D_{2m-2}$};
        
        \draw[|-|,dashed] (2.5,1) -- (3.5,1);
        \node[above] at (3,1) {$s_2$};
        \draw[ultra thick] (3.5,0) -- (4.5,0);
        \node[below] at (4,0) {$D_{2m-1}$};
        \draw[-,dotted] (2.5,0) -- (2.5,1);
        \draw[-,dotted] (3.5,0) -- (3.5,1);
        \draw[-,dotted] (4.5,0) -- (4.5,1);
        \end{scope}

        \begin{scope}[shift={(-14,0)}]
        \draw[ultra thick] (1,0) -- (2,0);
        \node[below] at (1.5,0) {$D_{1}$};
        
        \draw[|-|,dashed] (2,1) -- (2.5,1);
        \node[above] at (2.25,1) {$s_1$};
        \draw[ultra thick] (2.5,0) -- (3.5,0);
        \node[below] at (3,0) {$D_{2}$};
        \draw[-,dotted] (2,0) -- (2,1);
        \draw[-,dotted] (2.5,0) -- (2.5,1);
        \draw[-,dotted] (3.5,0) -- (3.5,1);
\begin{scope}[shift={(+2,0)}]
        \draw[|-|,dashed] (1.5,1) -- (2.5,1);
        \node[above] at (2,1) {$s_2$};
        \draw[ultra thick] (2.5,0) -- (3.5,0);
        \node[below] at (3,0) {$D_{3}$};
        \draw[-,dotted] (2.5,0) -- (2.5,1);
        \end{scope}
        \end{scope}
        
        \end{tikzpicture}
    \end{adjustbox}
    \caption{Dimer structure with a geometric defect.}
    \label{fig: geometrical defect}
\end{figure}
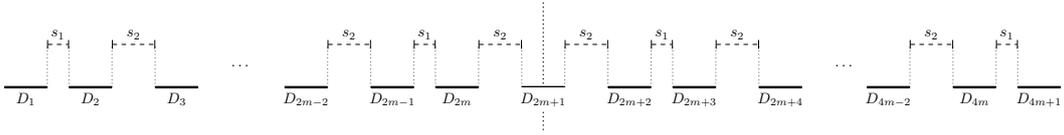

The subwavelength resonances inside the resonator chain may be computed using the capacitance matrix.
The authors of \cite{ammari2024exponentially} have derived an asymptotic formula for the bandgap resonant frequency. 
\begin{theorem}[{\cite[Theorem 5.5]{ammari2024exponentially}}]\label{thm:existenceofeigenfrquency}
    Consider a perturbed structure of dimers as illustrated in \cref{fig: geometrical defect}. For $N$ large enough, there exists a unique interface mode with eigenfrequency $\omega_{\mathsf{i}}^{(N)}$ in the band gap. The associated eigenfrequency $\omega_{\mathsf{i}}^{(N)}$ converges to 
    \begin{align}\label{eq: defect resonance}
        \omega_{\mathsf{i}} = \sqrt{\delta \frac{1}{2} \left(-\sqrt{\frac{9}{s_1^2}- \frac{14}{s_1s_2} + \frac{9}{s_2^2}}+\frac{3}{s_1}+\frac{3}{s_2}\right)}
    \end{align}
    exponentially as $N\to\infty$. In particular, for $N$ big enough,
    \begin{align}\label{eq: error estimate convergence frequency in gap}
        \vert \omega_{\mathsf{i}} - \omega_{\mathsf{i}}^{(N)}\vert 
        < Ae^{-BN},
    \end{align}
    for some $A,B>0$ independent of $N$.
\end{theorem}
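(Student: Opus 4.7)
The plan is to locate an eigenvalue of the finite capacitance matrix $\mathcal{C}^{(N)}$ associated with the defect chain inside the spectral gap of the infinite periodic dimer from \Cref{sec: 1D dimer}, and then invoke \Cref{prop: subwavelength band functions} to translate this eigenvalue into a subwavelength resonance $\omega_{\mathsf{i}}^{(N)} = \sqrt{\delta\, \lambda^{(N)}} + \mathcal{O}(\delta)$. Thus the theorem reduces to a statement about a single eigenvalue of $\mathcal{C}^{(N)}$ and its convergence as $N \to \infty$.

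First I would exploit the mirror symmetry of the defect configuration in \Cref{fig: geometrical defect} about the centre of the chain. The matrix $\mathcal{C}^{(N)}$ commutes with the corresponding reflection operator, and the interface mode is expected to belong to the symmetric sector. Restricting to this sector produces a half-chain problem, on which the interior three-term recurrence for the components of the eigenvector is exactly the one governing the infinite periodic dimer, with a modified boundary condition at the defect site that encodes the mirror identification.

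Next I would analyse the semi-infinite limit ($N = \infty$) via an explicit decaying-Bloch ansatz. For a candidate eigenvalue $\lambda$ strictly inside the gap, the interior recurrence admits two linearly independent Bloch-type solutions with quasimomenta $\alpha = \pi/L$ and imaginary parts $\pm\beta$, where $\beta = \beta(\lambda) > 0$ is obtained by inverting the relation \eqref{eq: eigenvalue for alpha = pi}. Normalisability on the half-chain selects the decaying branch, and imposing the modified defect boundary condition yields a scalar equation for $\lambda$ alone. A direct computation reduces this to the quadratic
\[
\lambda^2 - \left(\frac{3}{s_1} + \frac{3}{s_2}\right)\lambda + \frac{8}{s_1 s_2} = 0,
\]
whose smaller root lies in the gap between the two bulk bands and, via \Cref{prop: subwavelength band functions}, reproduces the formula \eqref{eq: defect resonance}.

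For the exponential estimate \eqref{eq: error estimate convergence frequency in gap}, I would quantify how truncating the semi-infinite chain to $N$ dimers perturbs this scalar defect equation. The interface eigenvector decays as $e^{-\beta L n}$ on the $n$-th dimer away from the defect, so the matrix entries coupling the defect neighbourhood to the boundary of the truncated chain are of order $\mathcal{O}(e^{-2\beta L N})$. A Gohberg--Sigal perturbation argument then transfers this exponentially small matrix perturbation into the eigenvalue, producing constants $A,B > 0$ independent of $N$. The main obstacle is making this reduction uniform in $N$: one must simultaneously control the gap distance of the candidate eigenvalue from the bulk bands and the decay rate $\beta$, both of which degenerate near the band edges, and verify that the Gohberg--Sigal constants depend only on these two quantities and not on the chain length.
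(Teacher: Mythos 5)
Note first that the paper does not prove this statement: it is imported verbatim as Theorem 5.5 of \cite{ammari2024exponentially}, so there is no in-paper argument to compare against. Taken on its own terms, your sketch follows the natural route (symmetry reduction, decaying Bloch ansatz in the bulk recurrence, scalar matching condition at the defect), and your quadratic $\lambda^2-\bigl(\tfrac{3}{s_1}+\tfrac{3}{s_2}\bigr)\lambda+\tfrac{8}{s_1s_2}=0$ is indeed consistent with \eqref{eq: defect resonance}: its smaller root is $\tfrac12\bigl(\tfrac{3}{s_1}+\tfrac{3}{s_2}-\sqrt{\tfrac{9}{s_1^2}-\tfrac{14}{s_1s_2}+\tfrac{9}{s_2^2}}\bigr)$.

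There are, however, two genuine gaps. First, \emph{uniqueness} is asserted by the theorem but not addressed by your argument: you produce one candidate in the symmetric sector, but you must also rule out gap eigenvalues coming from the antisymmetric sector, from a second admissible root of the scalar defect equation, and --- most importantly for a \emph{finite} chain --- from the two free ends of the truncated structure, which generically spawn boundary-localised eigenvalues that can sit near or inside the gap for finite $N$. Without this, the uniform spectral separation you need in the last step is unavailable. Second, the perturbation step is misstated: the tridiagonal capacitance matrix has \emph{no} entries coupling the defect neighbourhood to the chain ends, so nothing there is ``of order $\mathcal{O}(e^{-2\beta LN})$''. What is exponentially small is the residual $\|(\mathcal{C}^{(N)}-\lambda_{\mathsf{i}})v\|$ of the truncated interface eigenvector $v$, and converting that into an eigenvalue error requires precisely the $N$-uniform gap in $\sigma(\mathcal{C}^{(N)})$ around $\lambda_{\mathsf{i}}$ discussed above; since $W$ is here a scalar multiple of the identity, $\mathcal{C}^{(N)}$ is symmetric and elementary self-adjoint perturbation theory suffices in place of Gohberg--Sigal. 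A minor further point: \Cref{prop: subwavelength band functions} carries an $\mathcal{O}(\delta)$ remainder, while $\omega_{\mathsf{i}}$ in \eqref{eq: defect resonance} is exactly $\sqrt{\delta\lambda_{\mathsf{i}}}$, so the estimate \eqref{eq: error estimate convergence frequency in gap} only holds if $\omega_{\mathsf{i}}^{(N)}$ is defined through the capacitance eigenvalue rather than the full Helmholtz resonance; your reduction should state which object it tracks.
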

Combining this result with our calculations of the complex band structure in \Cref{sec: 1D dimer}, we are now able to determine the decay rate of the interface mode.
\begin{corollary}
    Consider a perturbed structure of dimers as illustrated in \cref{fig: geometrical defect}. For $N$ large enough, there exists a unique interface mode with eigenfrequency $\omega_{\mathsf{i}}^{(N)}$ in the band gap. The decay rate of the interface mode converges to
    \begin{equation}
        \beta = \frac{1}{L}\operatorname{arcosh}\left(\frac{s_1 s_2}{2}\left(\left(\frac{ \omega_{\mathsf{i}}^2}{\delta}-\frac{1}{s_1}-\frac{1}{s_2}\right)^2 -\frac{1}{s_1^2}-\frac{1}{s_2^2}\right)\right)
    \end{equation}
    exponentially as $N\to\infty$, where $ \omega_{\mathsf{i}}$ is given by \eqref{eq: defect resonance}.
\end{corollary}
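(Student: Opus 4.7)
The plan is to combine Theorem \ref{thm:existenceofeigenfrquency} with the explicit complex dispersion relation derived in \Cref{sec: 1D dimer} for the dimer at the zone boundary. The key idea is that far away from the geometric defect, the interface mode must coincide (up to an exponentially small correction) with a bulk Bloch mode of the unperturbed dimer chain at the same frequency $\omega_{\mathsf{i}}^{(N)}$. Since $\omega_{\mathsf{i}}^{(N)}$ lies inside the band gap, the only admissible Bloch modes are those with complex quasimomentum $k = \pi/L + \i\beta_N$, and this $\beta_N$ is precisely the spatial decay rate of the interface mode.

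First, I would identify the correct branch of the complex band functions. Because the gap in question is the first subwavelength gap of the dimer (which opens at $\alpha=\pi/L$), and because the interface frequency $\omega_{\mathsf{i}}^{(N)}$ tends to $\omega_{\mathsf{i}}$ given by \eqref{eq: defect resonance} which sits inside this gap, the relevant dispersion branch is $\omega_1^{\pi/L,\beta}$ from \eqref{eq: bandGap2}. Setting $\omega = \omega_1^{\pi/L,\beta_N}$ and solving algebraically for $\cosh(\beta_N L)$ gives, after squaring and rearranging,
\begin{equation*}
    \cosh(\beta_N L) = \frac{s_1 s_2}{2}\left(\left(\frac{(\omega_{\mathsf{i}}^{(N)})^2}{\delta}-\frac{1}{s_1}-\frac{1}{s_2}\right)^2 -\frac{1}{s_1^2}-\frac{1}{s_2^2}\right)
\end{equation*}
(up to the sign convention inherent to the arcosh branch in the formula). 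The admissibility interval \eqref{eq: beta_domain} guarantees the right-hand side lies in the domain of $\operatorname{arcosh}$ for all frequencies inside the gap.

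Next, I would pass to the limit $N\to\infty$. Theorem \ref{thm:existenceofeigenfrquency} gives $|\omega_{\mathsf{i}} - \omega_{\mathsf{i}}^{(N)}| < A e^{-BN}$. Since $\operatorname{arcosh}$ is locally Lipschitz wherever its argument lies strictly above $1$, and the right-hand side above is a smooth function of $\omega_{\mathsf{i}}^{(N)}$, the exponential convergence $\omega_{\mathsf{i}}^{(N)}\to\omega_{\mathsf{i}}$ transfers directly into exponential convergence of $\beta_N$ to the claimed limit $\beta$. A short computation confirms that the interior frequency $\omega_{\mathsf{i}}$ of \eqref{eq: defect resonance} sits strictly inside the band gap, so the Lipschitz estimate does not degenerate.

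The main obstacle, in my view, is not the algebra (which is a direct inversion of the dispersion relation of \Cref{sec: 1D dimer}) but the justification that the spatial decay rate of the finite-size interface mode truly equals the complex quasimomentum of the bulk Bloch mode at frequency $\omega_{\mathsf{i}}^{(N)}$. I would argue this by noting that, away from the defect, the eigenmode satisfies the unperturbed dimer transmission problem at frequency $\omega_{\mathsf{i}}^{(N)}$ and must decay as $|x|\to\infty$ (see \cite{ammari2024exponentially}); such a decaying solution is uniquely a complex Bloch mode, and its quasimomentum is determined by the dispersion relation. Hence $\beta_N$ is precisely the value obtained from inverting $\omega_1^{\pi/L,\beta}$. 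Combined with the exponential convergence of $\omega_{\mathsf{i}}^{(N)}$, this yields the statement.
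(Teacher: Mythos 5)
Your proposal follows essentially the same route as the paper: invert the explicit gap dispersion relation of \Cref{sec: 1D dimer} at the interface frequency and pass to the limit using \Cref{thm:existenceofeigenfrquency}. The paper's proof is a one-line statement that the result follows from solving \eqref{eq: bandGap3} for $\beta$, whereas you invert \eqref{eq: bandGap2}; this is a substantive difference worth flagging. Since $\omega_{\mathsf{i}}^2/\delta$ lies in the first gap at the zone edge (for $s_1=1$, $s_2=2$ one gets $\omega_{\mathsf{i}}^2/\delta\approx 1.22\in(1,2)$, in the lower half of the gap), the relevant branch is indeed $\omega_1^{\pi/L,\beta}$ as you say, and inverting it yields $\cosh(\beta L)=\tfrac{s_1s_2}{2}\bigl(\tfrac{1}{s_1^2}+\tfrac{1}{s_2^2}-(\tfrac{\omega_{\mathsf{i}}^2}{\delta}-\tfrac{1}{s_1}-\tfrac{1}{s_2})^2\bigr)$, i.e.\ the opposite sign inside the $\operatorname{arcosh}$ from the stated formula; your version reproduces the numerically reported $\beta\approx 0.115$, while the stated formula gives a negative $\operatorname{arcosh}$ argument (interpretable only if one absorbs the $\alpha=\pi/L$ shift into a complex $\operatorname{arcosh}$ and takes the real part). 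Your parenthetical about the sign convention correctly anticipates this. Beyond that, your proof supplies two things the paper leaves implicit: the Lipschitz transfer of the exponential convergence from $\omega_{\mathsf{i}}^{(N)}$ to $\beta_N$, and the identification of the spatial decay rate of the finite-chain interface mode with the bulk complex quasimomentum — the latter is only verified numerically in the paper (Figure \ref{Fig: Eigenmode loclization}) and your sketch of the away-from-the-defect Bloch-mode argument is the right way to make it rigorous.
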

\begin{proof}
    The result follows directly from solving \eqref{eq: bandGap3} for $\beta$.
\end{proof}

\begin{figure}[h]
    \centering
    \subfloat[][Band structure for a dimer in the unit cell with the resonant frequencies. The gap frequency has a complex quasimomentum $\beta = 0.1154$.]{{\includegraphics[width=8cm]{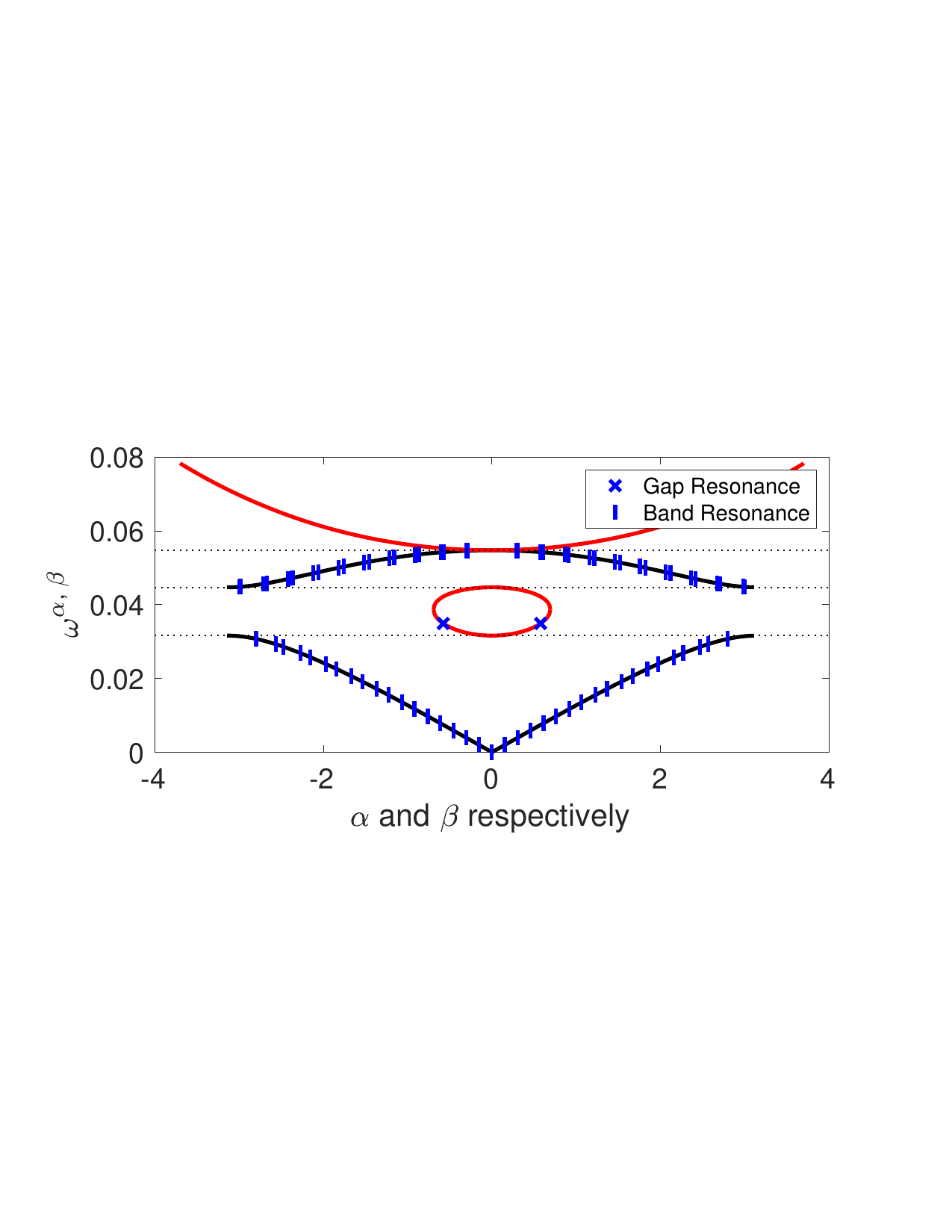} }}\\
    \subfloat[][Eigenmode (blue) associated to the gap resonance, contained in the decay envelope (red) predicted by the complex quasimomentum.]{{\includegraphics[width=6.67cm]{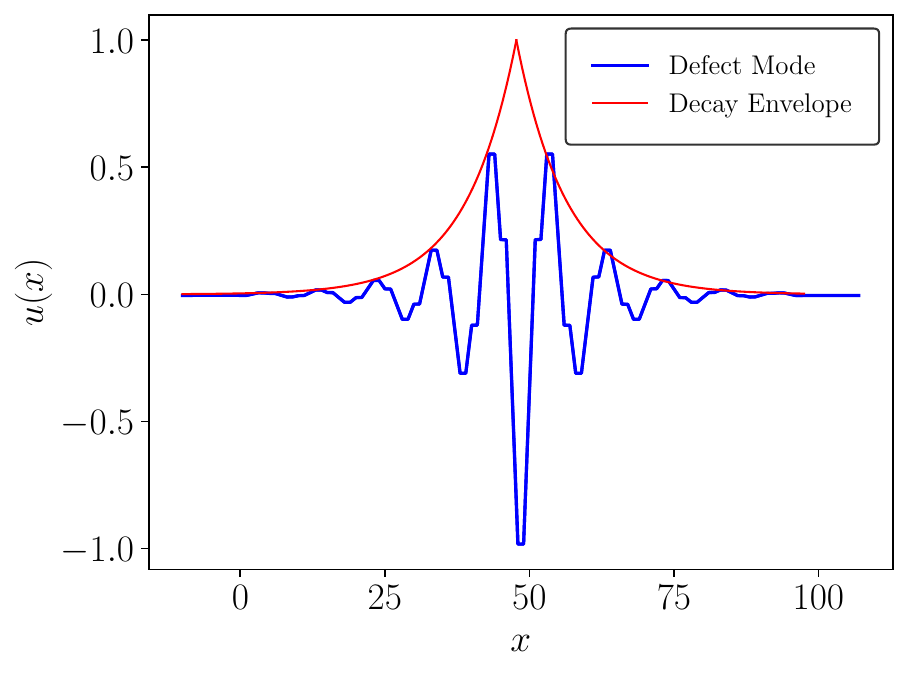} }}\quad
    \subfloat[][Same figure as (B) but with a logarithmic $y$-scale. The exponential decay is correctly predicted by the complex quasimomentum.]{{\includegraphics[width=6.67cm]{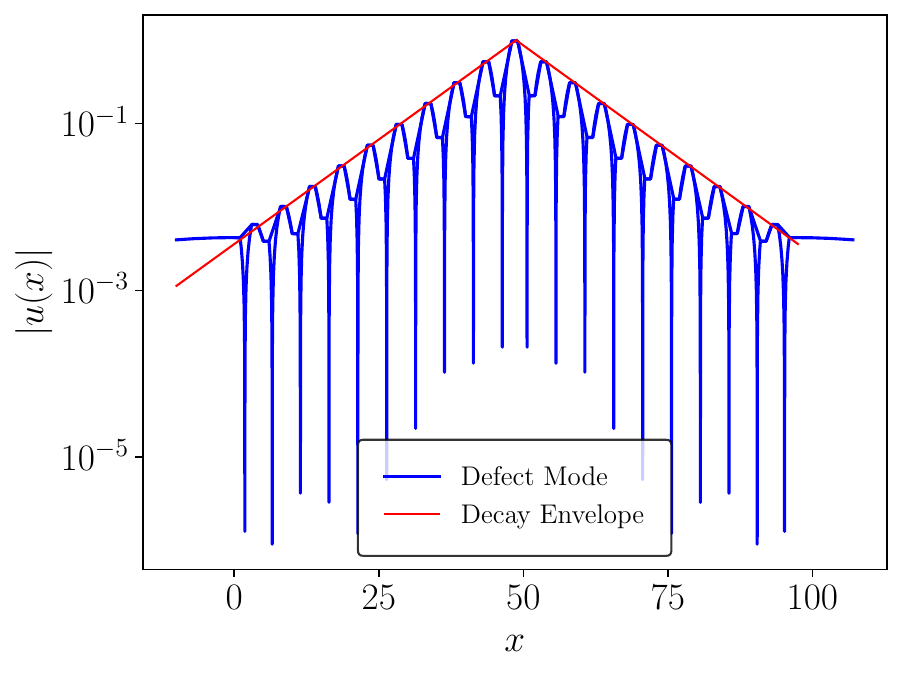} }}
    \caption{The complex band structure accurately predicts the decay rate of localised modes. Here, we consider the discrete band structure of a finite SSH chain (A). The structure has an interface mode (B), which is exponentially localized and whose decay rate is governed by the complex band structure (C). Here, the computations are  performed for $41$ dimers, $s_1 = 1, s_2 = 2$, and $\delta = 0.001$.}
   \label{Fig: Eigenmode loclization}
\end{figure}
Figure \ref{Fig: Eigenmode loclization} shows that the complex quasimomentum associated with the defect eigenfrequency accurately predicts the decay length of the defect mode.

\subsection{General frequency regime}\label{sec:general}
We now consider a general (not necessarily subwavelength) frequency regime. We begin with a general symmetry result, which shows that the gap functions pin $\alpha$ to the centre or edge of the Brillouin zone.
\begin{theorem}
    \label{Thm: alpha in the band gap}
    Let $\omega(\alpha,\beta)$ be a frequency inside the band gap $\beta \neq 0$. Then $\alpha$ is either equal to $0$ or fixed at the edge of the Brillouin zone, i.e., $\alpha \in \{0, \pm\frac{\pi}{L}\}$.
\end{theorem}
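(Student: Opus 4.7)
The plan is to leverage the transfer matrix formalism that structures the non-subwavelength part of Section \ref{sec:general}, and use the fact that its trace must be real to extract the constraint on $\alpha$. The key observation is that the complex Floquet-Bloch condition is equivalent to saying that $e^{\mathrm{i}(\alpha+\mathrm{i}\beta)L}$ is an eigenvalue of a real $2\times 2$ matrix with unit determinant, which forces its complex conjugate to also be an eigenvalue, and the trace identity then yields the imaginary-part equation $\sin(\alpha L)\sinh(\beta L)=0$.

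First, I would introduce the transfer matrix $T=T(\omega)$ defined as the linear map sending $(u(x_0),u'(x_0))^\top$ to $(u(x_0+L),u'(x_0+L))^\top$, obtained by propagating any solution of \eqref{eq: wave equation} across one unit cell using the Helmholtz equation inside each medium together with the transmission conditions at the endpoints $x_i^{\iL},x_i^{\iR}$. Since for real frequency $\omega$ all ODE coefficients are real, and the contrast $\delta$ appearing in the jump conditions is real, the entries of $T$ are real. Next, I would verify that $\det T=1$ by tracking the Wronskian $W(u,v)=uv'-u'v$ of two real linearly independent solutions: $W$ is constant within each homogeneous piece, is multiplied by $\delta$ when passing into a resonator at each $x_i^{\iL}$, and is multiplied by $1/\delta$ when exiting at each $x_i^{\iR}$, so the net multiplicative change over one unit cell equals $1$. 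Hence $T\in SL(2,\mathbb{R})$.

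The Bloch condition $u(x+L)=e^{\mathrm{i}(\alpha+\mathrm{i}\beta)L}u(x)$ is equivalent to $(u(x_0),u'(x_0))^\top$ being an eigenvector of $T$ with eigenvalue $\lambda_+=e^{\mathrm{i}(\alpha+\mathrm{i}\beta)L}$. Because $\det T=1$, the second eigenvalue is $\lambda_-=\lambda_+^{-1}=e^{-\mathrm{i}(\alpha+\mathrm{i}\beta)L}$, and since $T$ is real we must have $\operatorname{tr}T=\lambda_++\lambda_-\in\mathbb{R}$. This gives the identity
\begin{equation}
2\cos\bigl((\alpha+\mathrm{i}\beta)L\bigr)=2\cos(\alpha L)\cosh(\beta L)-2\mathrm{i}\sin(\alpha L)\sinh(\beta L)=\operatorname{tr}T\in\mathbb{R}.
\end{equation}
Setting the imaginary part to zero yields $\sin(\alpha L)\sinh(\beta L)=0$; under the hypothesis $\beta\neq 0$ we have $\sinh(\beta L)\neq 0$, forcing $\sin(\alpha L)=0$, i.e.\ $\alpha L\in\pi\mathbb{Z}$. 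Reducing modulo $2\pi/L$ in the Brillouin zone $Y^*$ leaves exactly $\alpha\in\{0,\pm\pi/L\}$.

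The most delicate step is establishing $\det T=1$ in the presence of the contrast $\delta$, since the naive expectation that the Wronskian is globally preserved fails pointwise; the saving feature is that each unit cell crosses an equal number of $\iL$- and $\iR$-type interfaces, so the factors $\delta$ and $1/\delta$ exactly cancel. Apart from this bookkeeping, the argument is purely algebraic and does not depend on the subwavelength assumption, which is why it yields the symmetry conclusion in the full (not necessarily subwavelength) regime.
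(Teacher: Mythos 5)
Your proof is correct, but it takes a genuinely different route from the paper's. The paper argues by symmetry: complex conjugation of a solution produces a solution with quasimomentum $(-\alpha,\beta)$, and an appeal to the nondegeneracy of the one-dimensional band functions (at most two quasimomentum pairs per frequency, justified via monotonicity) forces $\alpha=-\alpha$ in $Y^*$, hence $\alpha\in\{0,\pm\tfrac{\pi}{L}\}$. You instead work with the monodromy matrix in the value--derivative basis $\bigl(u(x_0),u'(x_0)\bigr)^\top$, which is real for real $\omega$, and establish $\det T=1$ by tracking the Wronskian through the $\delta$- and $\delta^{-1}$-jumps at the $\iL$- and $\iR$-type interfaces; the reality of $\operatorname{tr}T=2\cos\bigl((\alpha+\i\beta)L\bigr)$ then yields $\sin(\alpha L)\sinh(\beta L)=0$ directly. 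Your argument is self-contained and avoids the nondegeneracy claim entirely, and it gives the quantitative dispersion relation $\operatorname{tr}T=2\cos\bigl((\alpha+\i\beta)L\bigr)$ as a by-product; the Wronskian bookkeeping for $\det T=1$, which you rightly flag as the delicate step, is handled correctly since each unit cell crosses equally many entering and exiting interfaces. It is worth noting that your approach is essentially a real-basis variant of the machinery the paper develops \emph{after} this theorem (Lemmas \ref{lemma: transfer matrix structure} and \ref{lemma: eigenvalues transfer} together with the modified transfer matrix $T(k)$ of \eqref{eq: modifier transfer eigenvalue problem}): an eigenvalue of $T(k)$ off the unit circle forces both eigenvalues to be real, whence $e^{\i\alpha L}=\pm1$. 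The paper's symmetry proof is shorter but leans on an informally justified counting argument; yours is longer but fully rigorous and purely algebraic.
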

\begin{proof}
    We argue by symmetry. Note that when $u(x)$ is a solution to the scattering problem $(\ref{eq: wave equation})$ for some $\alpha$ and $\beta$, then $\overline{u}(-x)$ and $\overline{u}(x)$ are also solutions, and must therefore satisfy the Floquet condition
    \begin{align}
        \overline{u}(-x-L) &= \overline{e^{-\i (\alpha + \i\beta)L}}\overline{u}(-x) \\
        &= e^{\i (\alpha - \i\beta)L} \overline{u}(-x),
    \end{align}
    as well as
    \begin{align}
        \overline{u}(x + L) &= \overline{e^{\i (\alpha + \i\beta)L}}\overline{u}(x)\\
        &=e^{\i (-\alpha + \i\beta)L}\overline{u}(x).
    \end{align}
    In the one-dimensional case, the band functions are monotone and therefore nondegenerate. In other words, for each $\omega$ there exist at most 2 pairs $(\alpha_1, \beta_1)$ and $(\alpha_2, \beta_2)$. 
    Let $\omega$ be a band gap frequency, then $\beta \neq 0$. If $(\omega, \alpha, \beta)$ is a solution, then $(\omega, \alpha, -\beta)$ and $(\omega, -\alpha, \beta)$ must also be solutions. As a consequence of the nondegeneracy, it must be that $-\alpha = \alpha$. Because $\alpha$ is periodic with respect to the Brillouin zone, it follows that either $\alpha = 0$ or $\alpha = \pm \frac{\pi}{L}$, which corresponds to the edge of the Brillouin zone.
\end{proof}

\begin{remark}
    The same argument from Corollary \ref{Thm: alpha in the band gap} can be repeated for $\beta$ inside the bulk bands. The nondegeneracy condition now gives  $\beta = - \beta$, so that $\beta = 0$. This agrees with the classical Floquet-Bloch theory: inside the bulk bands, the modes propagate without decay.
\end{remark}

In the case of a one-dimensional resonator chain, we can explicitly derive the solution to $(\ref{eq: wave equation})$. The transfer matrix formalism (see, for example, \cite{Transfer_Matrix_Griffiths} for an overview) allows us to analyse how wave amplitudes of the general solution are transformed once they pass through a single resonator. We emphasise that this formalism is already well established; nevertheless, for the convenience of the reader, we recall the main results and proofs on the structure of the transfer matrix. The general solution to the one dimensional scattering problem $(\ref{eq: wave equation})$, in the region exterior to the resonators, can be written
\begin{equation}\label{eq:uPW}
    u(x) = \begin{cases}
        Ae^{\i kx} + Be^{-\i kx}, & \text{ if } x < x_1^\iL,  \\
        Ce^{\i kx} + De^{-\i kx}, & \text{ if } x > x_N^\iR,
    \end{cases}
\end{equation}
where the wave number is given by $ k := \frac{\omega}{v}$.

Due to the linearity of the Helmholtz equation, we have a linear relation between the amplitudes of left- and right-going waves. The transfer matrix $M$ links the amplitudes $(A, B)$ to the left of the resonator to the amplitudes $(C, D)$ to the right via
\begin{equation}\label{eq:M}
    M\begin{pmatrix}
        A \\
        B
    \end{pmatrix} = \begin{pmatrix}
        C \\ D
    \end{pmatrix},\quad M \in \R^{2 \times 2},
\end{equation}
where
\begin{equation}\label{eq: def of transfer matrix}
    M := \begin{pmatrix}
        M_{11} & M_{12} \\ M_{21} & M_{22}
    \end{pmatrix}.
\end{equation}
Passing over $i$ unit cells then corresponds to the transfer matrix raised to the $i$-th power. 

\begin{lemma}\label{lemma: transfer matrix structure}
    Let $M \in \R^{2 \times 2}$ be the transfer matrix, $\det(M) = 1$ and $M$ has the form
    \begin{equation}\label{eq:Ma}
        M = \begin{pmatrix}
            M_{11} & M_{12} \\
            M_{12}^* & M_{11}^*
        \end{pmatrix}.
    \end{equation}
\end{lemma}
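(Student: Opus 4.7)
The plan is to establish the two conclusions independently: the structural identity \eqref{eq:Ma} will follow from the complex-conjugation symmetry of \eqref{eq: wave equation}, while $\det M=1$ will follow from conservation of the Wronskian of two linearly independent solutions through the resonator array.

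For the structural identity, I would observe that whenever $u$ solves \eqref{eq: wave equation} with real coefficients and real $\omega$, so does $\bar u$. Complex-conjugating the plane-wave expansion \eqref{eq:uPW} gives, for $x<x_1^{\iL}$, $\bar u(x)=\bar B e^{\i kx}+\bar A e^{-\i kx}$, so the left-amplitude vector of $\bar u$ is $(\bar B,\bar A)^T$ and, analogously, its right-amplitude vector is $(\bar D,\bar C)^T$. Applying \eqref{eq:M} to both $u$ and $\bar u$ and using that $(A,B)^T$ is arbitrary, one obtains the single matrix identity $\sigma\bar M\sigma=M$, where $\sigma=\bigl(\begin{smallmatrix}0&1\\1&0\end{smallmatrix}\bigr)$. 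A componentwise comparison is exactly $M_{22}=M_{11}^*$ and $M_{21}=M_{12}^*$.

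For the determinant, I would pick two linearly independent solutions $u_1,u_2$ with left- and right-amplitude vectors $(A_j,B_j)^T$ and $(C_j,D_j)^T=M(A_j,B_j)^T$. On each interval of smoothness of the coefficients, Abel's identity, applicable because every Helmholtz equation in \eqref{eq: wave equation} has no first-derivative term, yields constancy of $W(u_1,u_2)=u_1 u_2'-u_1' u_2$. A direct check using the transmission conditions shows that $W(u_1,u_2)$ is multiplied by $\delta$ on entering any resonator $D_i$ and by $\delta^{-1}$ on leaving it, so these two jumps cancel per resonator and $W(u_1,u_2)$ takes the same value on the far left and the far right. Explicit evaluation on the plane-wave form yields $W(u_1,u_2)=-2\i k\,\det\!\bigl(\begin{smallmatrix}A_1&B_1\\A_2&B_2\end{smallmatrix}\bigr)$ on the left, and the analogous expression with $(C_j,D_j)$ on the right. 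Combined with $\bigl(\begin{smallmatrix}C_1&D_1\\C_2&D_2\end{smallmatrix}\bigr)=\bigl(\begin{smallmatrix}A_1&B_1\\A_2&B_2\end{smallmatrix}\bigr)M^T$ coming from \eqref{eq:M}, and a choice of $u_1,u_2$ with nonzero left determinant, this forces $\det M=1$.

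The main obstacle I anticipate is the careful book-keeping of the $\delta$-jumps across the $2N$ resonator boundaries: on any single boundary the elementary transfer matrix has determinant $\delta v_i/v$ or its reciprocal, so the unit-determinant property is not visible from any single jump and relies on the paired form of the transmission conditions at the two ends of each resonator. Once this pairwise cancellation is recorded, the remainder of both arguments is a routine calculation.
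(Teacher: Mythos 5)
Your argument for the structural identity \eqref{eq:Ma} is the same as the paper's: time-reversal symmetry sends a solution $u$ to $\bar u$ with swapped, conjugated amplitude pairs, giving $M(\bar B,\bar A)^T=(\bar D,\bar C)^T$ and hence $\sigma\bar M\sigma=M$, which is exactly the componentwise comparison the paper performs. For the determinant, the paper simply asserts that $\det M=1$ is ``a direct consequence of the conservation of energy'' (i.e.\ the conserved flux $\mathrm{Im}(\bar u u')$, which together with \eqref{eq:Ma} yields $|M_{11}|^2-|M_{12}|^2=1$), whereas you supply a genuine proof via constancy of the Wronskian: Abel's identity on each smooth piece, the cancellation of the $\delta$ and $\delta^{-1}$ jumps at the two faces of each resonator, and the identification $W(u_1,u_2)=-2\i k\det\bigl(\begin{smallmatrix}A_1&B_1\\A_2&B_2\end{smallmatrix}\bigr)$ on either side. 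Your bookkeeping of the transmission conditions is correct (entering multiplies $W$ by $\delta$, leaving by $\delta^{-1}$), and your route has the advantage of not presupposing the structure \eqref{eq:Ma} and of working verbatim for complex data, at the cost of being longer than the paper's one-line energy argument. Both arguments are sound.
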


\begin{proof}
    The determinant of $M$ being equal to $1$ is a direct  consequence of the conservation of energy. To show \eqref{eq:Ma}, we argue by symmetry. Let $u(x)$ be a solution to $(\ref{eq: wave equation})$, then by time-reversal symmetry $\overline{u}(x)$ is also a solution. Therefore, since the direction of propagation of the modes is reversed,
    \begin{equation}\label{eq:M*}
        M\begin{pmatrix}
            B^* \\ A^*
        \end{pmatrix} = \begin{pmatrix}
            D^* \\ C^*
        \end{pmatrix},
    \end{equation}
    which is equivalent to
    \begin{equation}
         \begin{pmatrix}
            M_{22}^* & M_{21}^* \\
            M_{12}^* & M_{11}^*
        \end{pmatrix} \begin{pmatrix}
            A \\ B
        \end{pmatrix} =\begin{pmatrix}
            C \\ D
        \end{pmatrix}.
    \end{equation}
    Comparing this to \eqref{eq: def of transfer matrix} we have $M_{11} = M_{22}^*$ as well as $M_{21} = M_{12}^*$ and the assertion follows.
\end{proof}

\begin{lemma}\label{lemma: eigenvalues transfer}
The two eigenvalues $\lambda_1$ and $\lambda_2$ of the transfer matrix $M$ are either real and inverses of each other, or nonreal and complex conjugates of each other. 
\end{lemma}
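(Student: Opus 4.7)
The plan is to deduce everything from the two structural facts established in the previous lemma, namely $\det(M)=1$ and the self-adjoint-off-diagonal form $M_{22}=M_{11}^*$, $M_{21}=M_{12}^*$. First I would write down the characteristic polynomial of $M$:
\begin{equation*}
    p(\lambda) = \lambda^2 - \tr(M)\lambda + \det(M).
\end{equation*}
The key observation is that both coefficients are real: $\det(M)=1$ by the previous lemma, while
\begin{equation*}
    \tr(M) = M_{11} + M_{22} = M_{11} + M_{11}^* = 2\,\Re(M_{11}) \in \mathbb{R}.
\end{equation*}

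Next I would split into cases according to the sign of the discriminant $\Delta = \tr(M)^2 - 4$. If $\Delta \geq 0$, the roots $\lambda_1,\lambda_2$ of $p$ are real, and Vieta's formula gives $\lambda_1 \lambda_2 = \det(M) = 1$, so they are real inverses of one another. If $\Delta < 0$, the polynomial $p$ has no real root and, since its coefficients are real, its two roots form a complex-conjugate pair $\lambda_2 = \overline{\lambda_1}$. Combining this with $\lambda_1 \lambda_2 = 1$ yields $|\lambda_1|^2 = 1$, so the conjugates also lie on the unit circle and are again inverses (in this case via $\overline{\lambda_1} = 1/\lambda_1$).

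I do not expect any real obstacle here: the whole statement is essentially a consequence of the reality of $\tr(M)$ and $\det(M)$ together with the fact that a real quadratic polynomial either has two real roots or a pair of complex-conjugate roots. The only thing worth stating explicitly is that the reality of $\tr(M)$ is inherited from the specific entrywise symmetry \eqref{eq:Ma} proved in the previous lemma, rather than from an unrelated reality assumption on $M$.
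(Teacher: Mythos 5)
Your proof is correct and rests on exactly the same two ingredients as the paper's: the reality of $\tr(M)$ (from the entrywise symmetry $M_{22}=M_{11}^*$) and $\det(M)=1$. The paper phrases the case split by writing $\lambda_1=x+\i y$, $\lambda_2=z-\i y$ and expanding the product, whereas you use the discriminant of the real characteristic polynomial; these are the same argument in different clothing, so no further comment is needed.
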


\begin{proof}
    Following Lemma \ref{lemma: transfer matrix structure} the trace of the transfer matrix is real, so we may define the eigenvalues as $\lambda_1 = x + \i y $ and $\lambda_2 = z - \i y$.
    Lemma \ref{lemma: transfer matrix structure} implies that the product of the eigenvalues $\lambda_1 \lambda_2 = 1$:
    \begin{align}
        xz + y^2+\i y(z-x) &= 1.
    \end{align}
Consequently, $y = 0$ or $x= z$. If $y=0$, the eigenvalues are real, and we know from \Cref{lemma: transfer matrix structure} that
\begin{equation}
    \lambda_1 = \frac{1}{\lambda_2}.
\end{equation}
If $x=z$, both eigenvalues are complex conjugates of each other, and again by \Cref{lemma: transfer matrix structure}, they lie on the unit circle:
\begin{align}
    \lambda_1 = e^{\i \varphi}, \quad \text{and} \quad  \lambda_2 = e^{-\i \varphi},
\end{align}
for some $\varphi$.
\end{proof}

Let $L$ be the length of the unit cell. At the edge of the unit cell, the complex Floquet condition corresponds to
\begin{align}
    u( x + L) = e^{\i (\alpha + \i\beta)L}u(x), \quad \text{and} \quad 
    u'( x + L) = e^{\i (\alpha + \i\beta)L}u'(x),
\end{align}
which is equivalent to the matrix equation
\begin{align} \label{eq: floquet_transfer}
    \begin{pmatrix}
        C \\ D
    \end{pmatrix}
    = e^{\i (\alpha + \i\beta)L} \begin{pmatrix}
        e^{-\i kL} & 0 \\
        0 & e^{\i kL}
    \end{pmatrix}\begin{pmatrix}
        A \\ B
    \end{pmatrix}.
\end{align}
This gives us a second equation which links the amplitudes at the beginning and the end of the unit cell. Rearranging terms leads to the eigenvalue problem,
\begin{equation}\label{eq: modifier transfer eigenvalue problem}
    \underbrace{ \begin{pmatrix}
        e^{\i kL} & 0 \\
        0 & e^{-\i kL}
    \end{pmatrix}M}_{ =: T(k)}\begin{pmatrix}
        A \\ B
    \end{pmatrix} = e^{\i (\alpha + \i\beta)L}\begin{pmatrix}
        A \\ B
    \end{pmatrix}.
\end{equation}
We will call $T(k)$ the modified transfer matrix. It is not hard to see that the properties from Lemma \ref{lemma: transfer matrix structure} and therefore Lemma \ref{lemma: eigenvalues transfer} are conserved.

\begin{theorem}\label{thm: master theorem}
    Let $k$ be an arbitrary frequency, and let $\lambda_1(k)$ and $\lambda_2(k)$ be the eigenvalues of the modified transfer matrix $T(k)$ for some $L>0$. Then the eigenmode $u_i$, for $i=1,2,$ propagates away from the origin as
    \begin{equation}
        u_i(x + L) = e^{\i (\alpha(k) + \i\beta(k)) L}u_i(x), \quad L \in \R,
    \end{equation}
    with
   \begin{equation}
    \begin{cases}
        \beta(k) = \frac{-1}{L}\log(\lvert \lambda_i(k) \rvert ), \\
        \alpha(k) = \frac{1}{L}\arg( \lambda_i(k)).
    \end{cases}
\end{equation}
\end{theorem}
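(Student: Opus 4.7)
The plan is to read off $\alpha(k)$ and $\beta(k)$ directly from the eigenvalue equation \eqref{eq: modifier transfer eigenvalue problem} defining $T(k)$. Recall how that equation was set up: starting from the plane-wave representation \eqref{eq:uPW} of $u$ outside the resonators, the transfer matrix $M$ in \eqref{eq:M} maps the amplitudes $(A,B)$ on the left of the unit cell to $(C,D)$ on the right, while the Floquet condition $u(x+L)=e^{\i(\alpha+\i\beta)L}u(x)$ forces the linear relation \eqref{eq: floquet_transfer}. Combining those two relations is precisely what produces $T(k)$ and the eigenvalue equation
\[
    T(k)\begin{pmatrix}A\\B\end{pmatrix}=e^{\i(\alpha+\i\beta)L}\begin{pmatrix}A\\B\end{pmatrix}.
\]
So the existence of a Bloch mode with quasimomentum $\alpha+\i\beta$ at frequency $k$ is equivalent to $e^{\i(\alpha+\i\beta)L}$ being an eigenvalue of $T(k)$.

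Given this reformulation, the plan is the following: take any eigenvalue $\lambda_i(k)$ of $T(k)$ with a corresponding eigenvector $(A,B)^\top$, and build the associated eigenmode $u_i$ from \eqref{eq:uPW} (extended into the resonators by solving the transmission problem, which is already implicit in the construction of $M$). By construction $u_i$ satisfies the interface conditions of \eqref{eq: wave equation}, and the eigenvalue relation above, rewritten, gives
\[
    u_i(x+L)=\lambda_i(k)\,u_i(x).
\]
It then suffices to identify $\lambda_i(k)$ with the exponential factor $e^{\i(\alpha(k)+\i\beta(k))L}$ by matching modulus and argument.

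Writing $\lambda_i(k)=|\lambda_i(k)|\,e^{\i\arg\lambda_i(k)}$ and comparing with $e^{\i\alpha L}\cdot e^{-\beta L}$, one obtains
\[
    |\lambda_i(k)|=e^{-\beta(k)L},\qquad \arg\lambda_i(k)=\alpha(k)L\pmod{2\pi},
\]
which, after taking logarithms, yields the claimed formulas $\beta(k)=-\tfrac{1}{L}\log|\lambda_i(k)|$ and $\alpha(k)=\tfrac{1}{L}\arg\lambda_i(k)$. The consistency of this identification with the real-versus-decaying dichotomy of the band structure is ensured by Lemma \ref{lemma: eigenvalues transfer}: if $\lambda_i(k)$ lies on the unit circle, then $\beta(k)=0$ and one recovers a propagating Bloch mode, while if the eigenvalues are real inverses, then $\arg\lambda_i\in\{0,\pi\}$, which, using $L=\pi/\alpha$ at the Brillouin zone edge, matches \Cref{Thm: alpha in the band gap}, and $\beta(k)\neq 0$ gives a decaying gap mode.

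The main subtlety, and the only part that requires a little care, is the $2\pi$-ambiguity of the argument and the branch of the logarithm. This is not a real obstacle: since $\alpha$ lives in the Brillouin zone $Y^*=\R/(2\pi/L)\Z$, the formula $\alpha(k)=\tfrac{1}{L}\arg\lambda_i(k)$ is well-defined modulo $2\pi/L$, which is exactly the right equivalence relation; and $\beta(k)$, being tied to a modulus, is unambiguous. No further analysis is required.
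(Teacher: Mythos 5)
Your proposal is correct and follows essentially the same route as the paper: identify the eigenvalue $\lambda_i(k)$ of $T(k)$ with the Floquet multiplier $e^{\i(\alpha+\i\beta)L}$ and extract $\alpha$ and $\beta$ by comparing modulus and argument (equivalently, real and imaginary parts of the complex logarithm). The extra remarks on the construction of $T(k)$, the branch ambiguity, and consistency with Lemma \ref{lemma: eigenvalues transfer} are sound additions but do not change the argument.
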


\begin{proof}
    Let $\lambda_1(k)$ and $\lambda_2(k)$ be the eigenvalues of $T(k)$ introduced in $(\ref{eq: modifier transfer eigenvalue problem})$. We may write
\begin{equation}
    \lambda_i(k) = e^{\i (\alpha + \i\beta)L}.
\end{equation}
Taking the complex logarithm yields
\begin{align}
    \i L \alpha  - L\beta &= \log(\lambda_i(k)) \\
    &= \log(\lvert \lambda_i(k) \rvert ) + \i \arg(\lambda_i(k)).
\end{align}
Comparing their respective real and imaginary parts yields the desired explicit formulas for $\alpha$ and $\beta$.
\end{proof}

We proceed by explicitly computing the transfer matrix associated to a single resonator (i.e., $N=1$) of length $2a$ centered inside a unit cell of length $L$ (although the method immediately generalises to multiple resonators). In this case, the solution is given by
\begin{equation}\label{eq:uPW2}
    u(x) = \begin{cases}
        Ae^{\i kx} + Be^{-\i kx}, & \text{ if } x <-a,  \\
        Fe^{\i k'x} + Ge^{-\i k'x}, & \text{ if } -a\leq x \leq a, \\
        Ce^{\i kx} + De^{-\i kx}, & \text{ if } x > a,
    \end{cases}
\end{equation}
where the wave number $k'$ is given by $k' := \frac{\omega}{v_1}$. We also introduce $n$ such that $k' = n k$, with frequency coupling constant $n = v_1/v$ specific to the system. When traversing the first boundary in $x = -a$ we obtain,
\begin{equation}
\begin{cases}
    Ae^{-\i ka} + Be^{\i ka} = Fe^{-\i k'a} + Ge^{\i k'a}, & \text{ continuity of field at } -a, \\
    \delta \i k(Ae^{-\i ka}-Be^{\i ka}) = \i k'(Fe^{-\i k'a}-Ge^{\i k'a}), & \text{ continuity of flux at } -a,
\end{cases}
\end{equation}
which is equivalent to the matrix formulation,
\begin{equation}
    \begin{pmatrix}
        1 &1 \\ \delta k & -\delta k
    \end{pmatrix}\begin{pmatrix}
        Ae^{-\i ka} \\ Be^{\i ka}
    \end{pmatrix} = \begin{pmatrix}
        1 &1 \\ k' & -k'
    \end{pmatrix}\begin{pmatrix}
        Fe^{-\i k'a} \\ Ge^{\i k'a}
    \end{pmatrix}.
\end{equation}
By rearranging the matrices we may write the scattered wave functions as a function of the incident waves,
\begin{equation}\label{eq: AB to FG}
    \begin{pmatrix}
        F \\ G
    \end{pmatrix} = \frac{1}{2} \begin{pmatrix}
        e^{\i k'a} &0 \\  0 & e^{-\i k'a}     \end{pmatrix}\begin{pmatrix}
        1 + \frac{k \delta}{k'} & 1-\frac{k \delta}{k'}\\
        1-\frac{k \delta}{k'}& 1+\frac{k \delta}{k'}
    \end{pmatrix}\begin{pmatrix}
        e^{-\i ka} & 0 \\ 0& e^{\i ka}
    \end{pmatrix}\begin{pmatrix}
        A \\ B
    \end{pmatrix} .
\end{equation}
This expression allows us to express the amplitudes of $F$ and $G$ as functions of $A$ and $B$.
Proceeding in the same way, taking into account the boundary conditions at $x = a$, we find that
\begin{equation}\label{eq: FG to CD}
    \begin{pmatrix}
        C \\ D    \end{pmatrix} = \frac{1}{2} \begin{pmatrix}
        e^{-\i ka} & 0 \\ 0 & e^{\i ka}
    \end{pmatrix}\begin{pmatrix}
        1 + \frac{k'}{k\delta} & 1- \frac{k'}{k\delta}\\
        1-\frac{k'}{k\delta} & 1 + \frac{k'}{k\delta}
    \end{pmatrix} \begin{pmatrix}
        e^{\i k'a} & 0 \\
        0 & e^{-\i k'a} 
    \end{pmatrix}\begin{pmatrix}
        F \\ G
    \end{pmatrix}.
\end{equation}
As a consequence, equations \eqref{eq: AB to FG} and \eqref{eq: FG to CD}  allow us to write the right amplitudes waves as a function of the left amplitudes,
{\begin{align}
    \begin{pmatrix}
        C \\ D
    \end{pmatrix}
    &=\underbrace{\begin{pmatrix}
\frac{e^{-2 \i a k} \left(2 \delta n \cos(2 a k n) + \i (\delta^2 + n^2) \sin(2 a k n)\right)}{2 \delta n} & -\frac{\i (\delta - n) (\delta + n) \sin(2 a k n)}{2 \delta n} \\
\frac{\i (\delta - n) (\delta + n) \sin(2 a k n)}{2 \delta n} & \frac{e^{2 \i a k} \left(2 \delta n \cos(2 a k n) - \i (\delta^2 + n^2) \sin(2 a k n)\right)}{2 \delta n}
\end{pmatrix}}_{= M}\begin{pmatrix}
        A \\ B    \end{pmatrix}. \label{eq: transfer matrix square potential}
\end{align}}
We have effectively linked the wave amplitudes before the resonator to those after the resonator via the transfer matrix.
Following \eqref{eq: modifier transfer eigenvalue problem}, the resulting eigenvalue problem is
{\small \begin{multline}\label{eq: transfer_Matrix_eigenvalue_problem}
    \overbrace{\begin{pmatrix}
\frac{e^{-\i k (2 a - L)} \left(2 \delta n \cos(2 a k n) + \i (\delta^2 + n^2) \sin(2 a k n)\right)}{2 \delta n} & -\frac{\i e^{-\i k L} (\delta - n) (\delta + n) \sin(2 a k n)}{2 \delta n} \\
\frac{\i e^{\i k L} (\delta - n) (\delta + n) \sin(2 a k n)}{2 \delta n} & \frac{e^{-\i k (-2 a + L)} \left(2 \delta n \cos(2 a k n) - \i (\delta^2 + n^2) \sin(2 a k n)\right)}{2 \delta n}
\end{pmatrix}
}^{= T(k)}\begin{pmatrix}
    A \\ B
\end{pmatrix}\qquad\\ \hspace{50mm}= e^{\i (\alpha + \i\beta)L} \begin{pmatrix}
    A \\ B
\end{pmatrix}.
\end{multline}}\\%
While the eigenvalues of the matrix $T(k)$ could be computed explicitly, we refrain from doing so due to the resulting lengthy expressions. Instead, we illustrate the results numerically.

Figure \ref{Fig: General Regime} (A)  depicts Theorem \ref{Thm: alpha in the band gap} and one observes that for band gap frequencies, $\alpha$ is either $0$ or at the edge of the Brillouin Zone. In other words, $\alpha$ is constant for band gap frequencies. Moreover, $\lvert \beta \rvert$ is bounded, and the maximal decay occurs in the middle of the band gap.

Figure \ref{Fig: General Regime} (B) illustrates that asymptotically, we may recover the subwavelength regime from the general regime by letting $\delta \to 0$. As seen, the width of the first gap increases as $\delta \to 0$, while the subwavelength frequency regime is approximated by \Cref{Fig: spectrum_dimer_system} (A). 

\begin{figure}[tbh]
    \centering
    \subfloat[][Computation performed for $n_0 = 1.8, a = 0.2, b = 0.5$ and $\delta = 1$. ]{{\includegraphics[height=5.2cm]{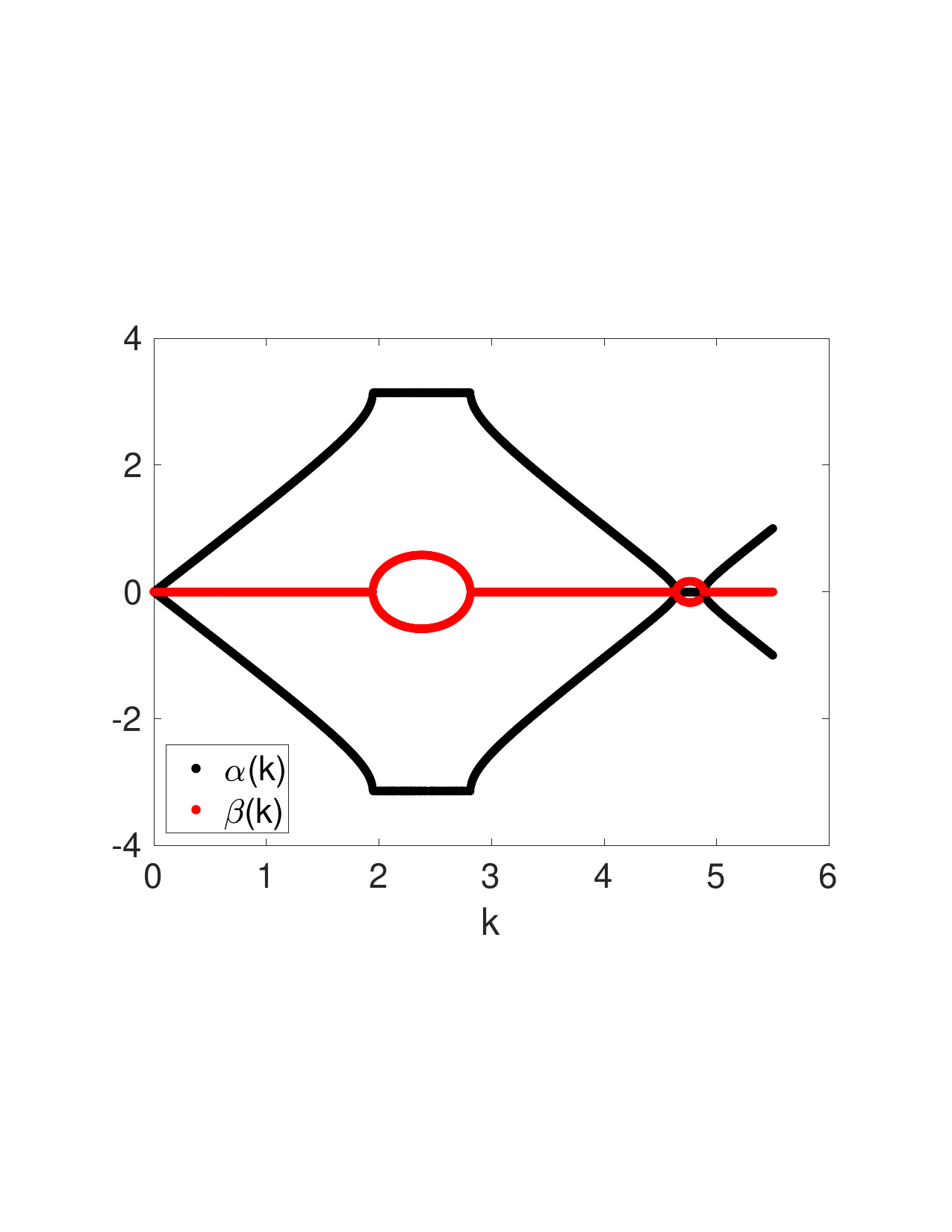} }}
    \qquad
    \subfloat[][Computation performed for $n_0 = 1.8, a = 0.2, b = 0.5$ and $\delta = 0.05$.]{{\includegraphics[height=5.2cm]{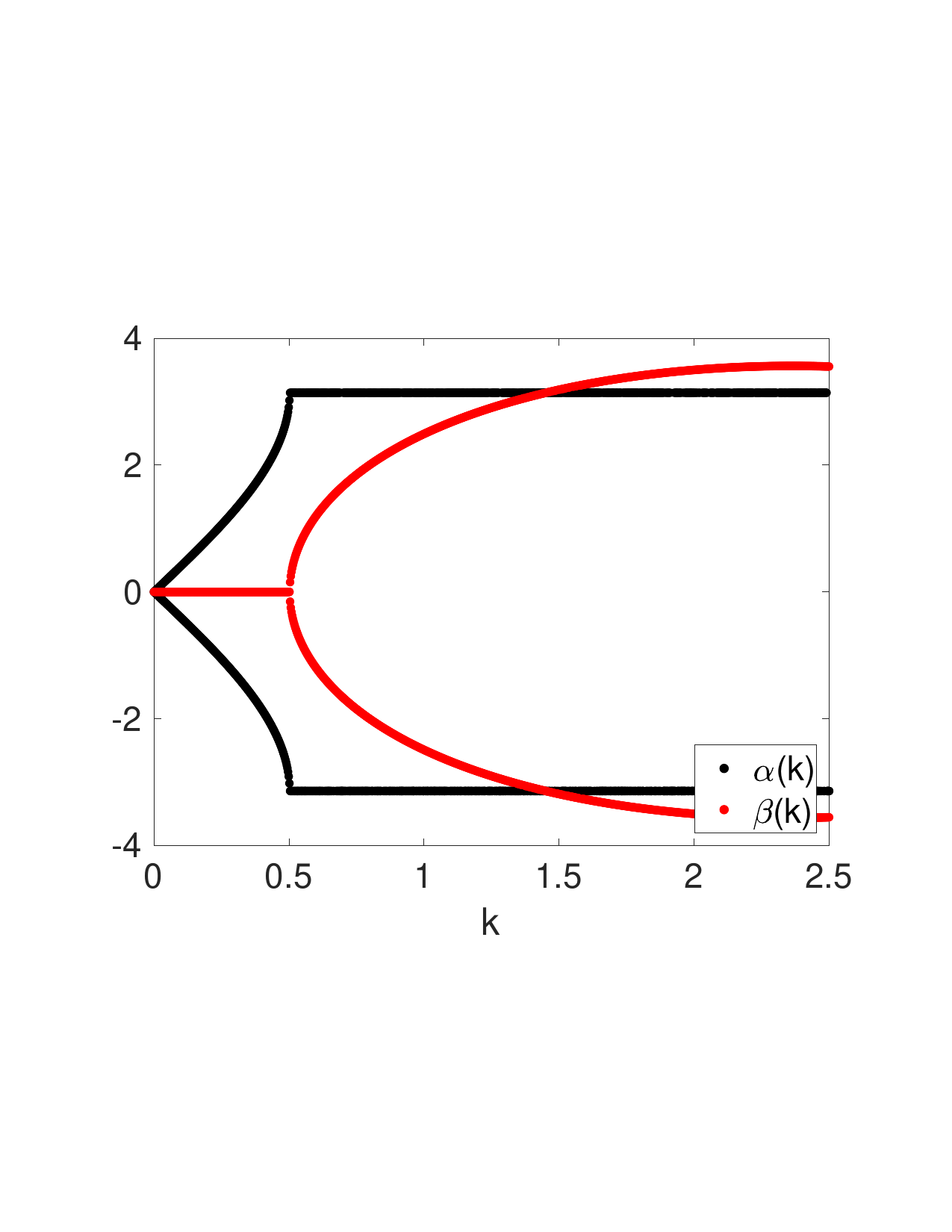} }}%
    \caption{Plot for $\alpha(k)$ and $\beta(k)$, following Theorem \ref{thm: master theorem}. In Figure (B) we illustrate that the subwavelength regime may be recovered from the General Regime by making the contrast small enough.}
   \label{Fig: General Regime}
\end{figure}


\section{Two-dimensional crystals}\label{sec: Two dimensional resonator}

In this section, we will consider a two-dimensional periodic structure of resonators. In the two-dimensional case, the transfer matrix approach of \Cref{sec:general} is no longer applicable, and we instead develop quasiperiodic layer-potential techniques to find evanescent solutions to the scattering problem and characterise the band gap functions.

\subsection{Setting and problem formulation} We consider a two-dimensional crystal of subwavelength resonators, repeated periodically in a lattice $\Lambda$. We let $l_1, l_2\in \R^2$ denote lattice vectors generating the lattice:
$$\Lambda := \left\{ m_1 l_1+m_{2} l_{2} ~|~ m_i \in \Z \right\}. $$
Denote by $Y \subset \R^{2}$ a fundamental domain of the given lattice. Explicitly, we take
$$ Y := \left\{ c_1 l_1+c_{2} l_{2} ~|~ 0 \le c_1,c_{2} \le 1 \right\}. $$
The dual lattice of $\Lambda$, denoted $\Lambda^*$, is generated by $\alpha_1,\alpha_{2}$ satisfying $ \alpha_i\cdot l_j = 2\pi \delta_{ij}$. The (real) Brillouin zone $Y^*$ is now defined as $Y^*:= \R^{2} / \Lambda^*$. We remark that $Y^*$  has the topology of a two-dimensional torus. For band gap modes, the complex Brillouin zone is given by $Y^*+ \iu\R^2$.

In two dimensions, the resonators $D_1,D_2,\dots,D_N\subset Y$ are disjoint, connected sets with boundaries in $C^{1,s}$ for some $0<s<1$. As before, we let $N$  denote the number of resonators inside $Y$, and let $D = \bigcup_{i=1}^N D_i$. An example of the setting, with a single resonator in a square lattice, is sketched in \Cref{fig:square_lattice}.
 
\begin{figure}[tbh]
	\centering
	\includegraphics[]{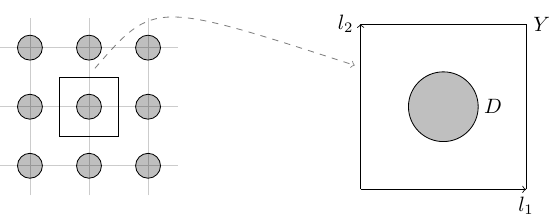}
	\caption{Illustration of the crystal in the case of a square lattice with a single resonator inside the unit cell ($N=1$).} \label{fig:square_lattice}
\end{figure}

For $\alpha \in Y^*$ in the real Brillouin zone and $\beta\in \R^2$, we consider the following two-dimensional Helmholtz problem

\begin{equation}\label{eq:u_pde}
    \begin{cases}
		\ds\Delta v + k^2u = 0, & \text{in } Y \setminus \overline{D}, \\
		\ds\Delta v + k_i^2u = 0,  & \text{in } D_i, \\
		\ds u\rvert_+ -u\rvert_- =0, &\text{on }\partial D,\\
		\ds\frac{\partial u}{\partial \nu}\bigg|_{-} - \delta \frac{\partial u}{\partial \nu} \bigg|_{+} = 0, & \text{on } \partial D, \\
        \ds u(x + \ell) = e^{\i (\alpha+\i \beta) \cdot \ell}u(x), &\text{ for all }\ell \in \Lambda,
    \end{cases}
\end{equation}
where $\nu$ stands for the outward-facing normal derivative. Here, $|_\pm$ denotes the limit from outside or inside $D$, respectively. 

\subsection{The band gap  Green's function} For a real quasimomentum $\alpha$, the $\alpha$-quasiperiodic Green's function $G^{\alpha,k}$ satisfies
\begin{equation}
    \Delta G^{\alpha,k}(x) + k^2G^{\alpha,k}(x) = \sum_{m \in \Lambda}\delta(x-m)e^{\i  \alpha \cdot m},
\end{equation}
and is given by
\begin{equation}\label{eq: real Green's function}
    G^{\alpha,k}(x) = \frac{1}{\lvert Y \rvert} \sum_{q \in \Lambda^*}\frac{e^{\i (\alpha + q)\cdot x}}{k^2-\lvert \alpha + q\rvert^2}.
\end{equation}
The solution $u(x)$ satisfies the complex Floquet condition $u(x +\ell) = e^{\i  \alpha\cdot \ell}e^{-\beta \cdot \ell}u(x)$.
Since $\beta \in \R^2$ is a vector, it defines not only the rate but also the direction, of decay. By a change of function, we may find a solution that is no longer decaying:
\begin{equation}\label{eq: exponential decay 2D}
    v(x) := e^{\beta\cdot x}u(x).
\end{equation}
Substituting into \eqref{eq:u_pde}, we find that $v$ satisfies
\begin{equation}\label{eq:v_pde}
    \begin{cases}
		\ds\Delta v - 2\beta\cdot \nabla v + (k^2+|\beta|^2)v = 0, & \text{in } Y \setminus \overline{D}, \\
		\ds\Delta v - 2\beta\cdot \nabla v + (k_i^2+|\beta|^2)v = 0, & \text{in } D_i, \\
		\ds v\rvert_+ -v\rvert_- =0, &\text{on }\partial D,\\
		\ds\frac{\partial v}{\partial \nu}\bigg|_{-} - (\beta\cdot \nu) v - \delta \left(\frac{\partial v}{\partial \nu} \bigg|_{+} - (\beta\cdot \nu) v \right) = 0, & \text{on } \partial D, \\
        \ds v(x + \ell) = e^{\i \alpha \cdot \ell}v(x), &\text{ for all }\ell \in \Lambda.
    \end{cases}
\end{equation}
We emphasise that $v$ now satisfies the real Floquet-Bloch condition given by the last line of \eqref{eq:v_pde}. The band gap  Green's function $\Tilde{G}$ can be defined in a similar fashion:
\begin{equation}
    \Tilde{G}^{\alpha,\beta,k}(x) = e^{\beta \cdot x}G^{\alpha,k}(x).
\end{equation}
As a consequence, the band gap  Green's function satisfies the following equation
\begin{equation}\label{eq: formula BAndgap greens function}
    \Delta \Tilde{G}^{\alpha,\beta,k}(x) -2 \beta\cdot \nabla \Tilde{G}^{\alpha,\beta,k}(x) + (k^2 + \lvert \beta \rvert^2) \Tilde{G}^{\alpha,\beta,k}(x) = \sum_{m \in \Lambda} e^{\i  \alpha \cdot m} \delta(x - m),
\end{equation}
and is  given by 
\begin{equation}\label{eq: Green's function}
    \Tilde{G}^{\alpha,\beta,k}(x) = \frac{1}{\lvert Y \rvert} \sum_{q \in \Lambda^*} \frac{e^{\i(\alpha + q) \cdot x}}{k^2 + \lvert \beta \rvert^2 -2\i \beta \cdot (\alpha + q) - \lvert \alpha + q\rvert^2}.
\end{equation}
The Green's function has singularities, which may arise for certain parameter values $\beta$. The series defining $\Tilde{G}^{\alpha, \beta}(x)$ breaks down if
\begin{equation}
    k^2 + \lvert \beta \rvert^2 -2\i \beta \cdot (\alpha + q) - \lvert \alpha + q\rvert^2= 0,
\end{equation}
for some $q \in \Lambda^*$. Analogously to the real Brillouin zone, we call such a singularity a \emph{Rayleigh} singularity. Rayleigh singularities occur precisely when
\begin{equation}\label{eq:Rayleigh}
    \lvert \alpha + q \rvert^2 = k^2 + \lvert \beta \rvert^2 \quad \text{and}\quad     ( \alpha  + q) \perp \beta.
\end{equation}

\begin{remark}
    This is a fundamental difference between the one-dimensional and the two- and three-dimensional lattices. In one dimension we will never have $(\alpha + q) \perp \beta$ and therefore we will never have a Rayleigh singularity if $\beta \neq 0$.
\end{remark}

\begin{remark}
    It is possible to rewrite the Green's function $(\ref{eq: Green's function})$ by taking the Fourier transform of the expression $(\ref{eq: formula BAndgap greens function})$ and then apply the Poisson summation formula. This results in
\begin{equation}\label{eq: Bandgap Green's function}
    \Tilde{G}^{\alpha,\beta,k}(x) = \sum_{m \in \Lambda}\Tilde{G}^{\alpha,\beta,k}_0(x)(x-m)e^{\i  \alpha \cdot (x-m)},
\end{equation}
where $\Tilde{G}^{\alpha,\beta,k}_0$ is the free-space Green's function given by
\begin{equation}\label{eq: Greens function kernel}
    \Tilde{G}^{\alpha,\beta,k}_0(x) = \frac{1}{(2\pi)^2}\int_{\R^2} \frac{e^{\i  \alpha \cdot x}}{k^2 + \lvert \beta \rvert^2 - 2\i \alpha\cdot \beta - \lvert \alpha \rvert^2} \mathrm{d} \alpha.
\end{equation}
\end{remark}

\subsection{Computing the complex Green's function}
For $\beta \neq 0$ the complex Green's function is given by
\begin{equation}\label{eq: lattice sum complex k}
    \Tilde{G}^{\alpha, \beta,k}(x) = \frac{1}{\lvert Y \rvert} \sum_{q \in \Lambda^*} \frac{e^{\i (\alpha + q) \cdot x}}{(k^2 + \lvert \beta \rvert^2)  - 2 \i \beta \cdot (\alpha + q) - \lvert \alpha + q\rvert^2}.
\end{equation}
For a real quasimomentum, i.e., $\beta = 0$, the Green's function coincides with the ``classical'' quasiperiodic Green's function given by $(\ref{eq: real Green's function})$. The evaluation of $\Tilde{G}^{\alpha, 0,k}(x)$ is a well-known problem, and rapid convergence rates can be achieved through Ewald summation or lattice summation methods (see, for example, \cite{toukmaji1996ewald,linton2010lattice} for comprehensive reviews). Hence, instead of computing the lattice sum $(\ref{eq: lattice sum complex k})$ directly, we compute the difference $\Tilde{G}^{\alpha, \beta,k}_{R}$:
\begin{align}
    \Tilde{G}^{\alpha, \beta,k}_{R}(x) &:= \Tilde{G}^{\alpha, \beta,k}(x) -\Tilde{G}^{\alpha, 0,k}(x)\\
    &= \frac{1}{\lvert Y \rvert}\sum_{q \in \Lambda^*}e^{\i (\alpha + q)\cdot x}\left(\frac{1}{(k^2 + \lvert \beta \rvert^2) -  2 \i \beta \cdot (\alpha + q) - \lvert \alpha + q\rvert^2}- \frac{1}{k^2 - \lvert \alpha + q \rvert^2 }\right)\\
    &=\frac{1}{\lvert Y \rvert}\sum_{q \in \Lambda^*}e^{\i (\alpha + q)\cdot x} \frac{ 2 \i \beta \cdot (\alpha + q)-\lvert \beta \rvert^2}{\bigl((k^2 + \lvert \beta \rvert^2)- 2 \i \beta \cdot (\alpha + q) -  \lvert \alpha + q \rvert^2\bigr)\left(k^2 - \lvert \alpha + q\rvert^2\right)}.\label{eq: Grenns_lattice sum}
\end{align}
In two dimensions, we observe that the series defining $\Tilde{G}^{\alpha, \beta,k}_{R}(x)$ is absolutely convergent. We may now rewrite the complex Green's function $(\ref{eq: lattice sum complex k})$ as
\begin{equation}
    \Tilde{G}^{\alpha, \beta,k}(x) = \Tilde{G}^{\alpha, 0,k}(x) + \Tilde{G}^{\alpha, \beta,k}_{R}(x).
\end{equation}
In other words, by exploiting existing summation methods for $\Tilde{G}^{\alpha, 0,k}$, we may improve the convergence rate of the lattice sum $\Tilde{G}^{\alpha, \beta,k}(x)$ by instead computing $\Tilde{G}^{\alpha, \beta,k}_{R}$.

\begin{remark}
From $(\ref{eq: Grenns_lattice sum})$ one can also directly derive an asymptotic formula, since $\Tilde{G}^{\alpha, \beta}_{R}(x) = \mathcal{O}(\beta)$ for small $\beta$. It follows that for small $\beta$,
\begin{equation}\label{eq:Gasymp}
    \Tilde{G}^{\alpha, \beta,k}(x) = \Tilde{G}^{\alpha, 0,k}(x) + \mathcal{O}(\beta),
\end{equation}
where explicit formulas for the correction terms are readily available.
\end{remark}

\subsection{Layer potential theory}
The Green's function solves the free Helmholtz equation with complex Floquet-Bloch conditions, but we still need to make sure the transmission boundary conditions on the boundary of the resonator are met. Using layer potential theory, we will rephrase the differential equation as a system of integral equations on the boundary of the resonators.
\begin{definition}[Layer potentials]
    Let $D \subset Y$ be a domain in $\R^2$ with a boundary $\partial D \in C^{1,s}$, with some $0 < s < 1$. Let $\nu$ denote the unit outward normal to $\partial D$. For $\omega > 0$, let $\tilde{\mathcal{S}}^{\alpha,\beta,k}_D$ be the single layer potential and $(\Tilde{\mathcal{K}}^{\alpha,\beta,k}_D)^*$ be the Neumann-Poincaré operator associated to the band gap Green's function $\Tilde{G}^{\alpha,\beta,k}$ defined in $(\ref{eq: Bandgap Green's function})$. For any given density $\phi \in L^2(\partial D),$
    \begin{align}
        \Tilde{\mathcal{S}}_D^{\alpha,\beta,k}[\phi](x) &= \int_{\partial D} \Tilde{G}^{\alpha,\beta,k}(x-y)\phi(y) \mathrm{d} \sigma(y), \quad x \in \R^2, \label{def: single layer potential}\\
       (\Tilde{\mathcal{K}}^{\alpha,\beta,k}_D)^*[\phi](x) &= \int_{\partial D} \frac{\partial \Tilde{G}^{\alpha,\beta,k}(x-y)}{\partial \nu(x)}\phi(y) \mathrm{d}\sigma(y), \quad x \in \partial D.
    \end{align}
\end{definition}
These operators are connected by the following so-called \emph{jump conditions}.
\begin{lemma}\label{lemma: jump conditions}
    Let $\phi \in L^2(\partial D)$ and assume that
    \begin{equation}
        \lvert \alpha + q \rvert^2 \neq \omega^2 + \lvert \beta \rvert^2 \text{ or } (\alpha + q)\cdot \beta \neq 0,
    \end{equation}
    for all $q \in \Lambda^*$. Then we have the jump conditions
    \begin{align}
        \left.\Tilde{\mathcal{S}}^{\alpha,\beta,k}_D[\phi]\right|_+ &= \left.\Tilde{\mathcal{S}}^{\alpha,\beta,k}_D[\phi]\right|_-,\\
        \left.\frac{\partial}{\partial \nu}\right|_\pm \Tilde{\mathcal{S}}^{\alpha,\beta,k}_D[\phi] &= \left( \pm \frac{1}{2}I + (\Tilde{\mathcal{K}}^{\alpha,\beta,k}_D)^*\right)[\phi].
    \end{align}
\end{lemma}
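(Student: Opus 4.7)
The plan is to reduce the jump relations for $\widetilde{\mathcal{S}}^{\alpha,\beta,k}_D$ to the classical jump relations for a Helmholtz-type free-space single layer potential, by isolating the principal singularity of the band gap Green's function $\widetilde{G}^{\alpha,\beta,k}$ near the diagonal. The non-Rayleigh hypothesis ensures that the series in \eqref{eq: Green's function} converges, so that $\widetilde{G}^{\alpha,\beta,k}$ is a well-defined distribution satisfying \eqref{eq: formula BAndgap greens function}, i.e.\ a fundamental solution of the uniformly elliptic operator $L_\beta := \Delta - 2\beta\cdot\nabla + (k^2+|\beta|^2)$ modulo the periodic array of lattice sources.

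I would first produce an explicit free-space fundamental solution for $L_\beta$. The conjugation identity $L_\beta(e^{\beta\cdot x}u) = e^{\beta\cdot x}(\Delta + k^2)u$ shows that $\Phi_\beta(x) := e^{\beta\cdot x}\Gamma_k(x)$ does the job, where $\Gamma_k(x) = -\tfrac{\i}{4}H^{(1)}_0(k|x|)$ is the classical Helmholtz fundamental solution. Since $\Gamma_k(x) = \tfrac{1}{2\pi}\log|x| + (\text{smooth})$ and $e^{\beta\cdot x}=1+O(|x|)$ near the origin, $\Phi_\beta$ has the same $\tfrac{1}{2\pi}\log|x|$ leading singularity. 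This matches the expected leading singularity of $\widetilde{G}^{\alpha,\beta,k}$ at $x=0$, which is the only lattice point in the interior of $Y$.

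Next I would set $R(x) := \widetilde{G}^{\alpha,\beta,k}(x) - \Phi_\beta(x)$ and argue that $R \in C^\infty$ on an open neighbourhood of $\partial D$. Indeed $L_\beta R = \sum_{m \in \Lambda\setminus\{0\}} e^{\i\alpha\cdot m}\delta(x-m)$ is supported strictly outside a neighbourhood of $\partial D$, since $\partial D \subset Y$ avoids the non-zero lattice points. Elliptic regularity for $L_\beta$ then gives smoothness of $R$ there, and the integral operator on $\partial D$ with kernel $R(x-y)$ is therefore smoothing: it maps $L^2(\partial D)$ continuously into functions that are $C^\infty$ across $\partial D$, contributing no jump to either the layer potential or its normal derivative. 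The problem therefore reduces to the jump relations for the single layer operator $\mathcal{S}^{\Phi_\beta}_D[\phi](x) := \int_{\partial D}\Phi_\beta(x-y)\phi(y)\,\dd\sigma(y)$.

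Finally, the jump relations for $\mathcal{S}^{\Phi_\beta}_D$ reduce to the classical Helmholtz case through the factorisation $\Phi_\beta(x-y) = e^{\beta\cdot x}\Gamma_k(x-y)e^{-\beta\cdot y}$, which gives $\mathcal{S}^{\Phi_\beta}_D[\phi](x) = e^{\beta\cdot x}\mathcal{S}^{H,k}_D[e^{-\beta\cdot(\cdot)}\phi(\cdot)](x)$ with $\mathcal{S}^{H,k}_D$ the classical Helmholtz single layer potential. Continuity across $\partial D$ is immediate, while the standard jump formulae (Ammari--Kang) applied after accounting for the smooth pre- and post-factors $e^{\pm\beta\cdot x}$ produce the $\pm\tfrac{1}{2}I$ term together with the Neumann--Poincaré operator associated to $\Phi_\beta$. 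Reassembling the smoothing contribution of $R$ identifies the combined operator with $(\widetilde{\mathcal{K}}^{\alpha,\beta,k}_D)^*$ as defined in the excerpt. The main obstacle will be rigorously justifying the smoothness of $R$: although \eqref{eq: Green's function} converges only conditionally in two dimensions, this is cleanest to establish via the PDE satisfied by $R$ and interior elliptic regularity, rather than by termwise differentiation of the defining series.
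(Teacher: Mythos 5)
Your proof is correct, but it takes a genuinely different route from the paper's. The paper's proof is a two-line reduction: it writes $\Tilde{G}^{\alpha,\beta,k}=\Tilde{G}^{\alpha,0,k}+\Tilde{G}^{\alpha,\beta,k}_{R}$, observes that the difference kernel \eqref{eq: Grenns_lattice sum} is regular around $x=0$, and concludes that $\Tilde{\mathcal{S}}^{\alpha,\beta,k}_D$ inherits the standard jump conditions from the real-quasimomentum quasiperiodic single layer potential $\Tilde{\mathcal{S}}^{\alpha,0,k}_D$. You instead subtract the free-space fundamental solution $\Phi_\beta(x)=e^{\beta\cdot x}\Gamma_k(x)$ of the conjugated operator $L_\beta$, show the remainder $R$ is smooth near $\partial D$ by interior elliptic regularity (its sources sit at the nonzero lattice points), and reduce to the classical Helmholtz jump formulae via the factorisation $\Phi_\beta(x-y)=e^{\beta\cdot x}\Gamma_k(x-y)e^{-\beta\cdot y}$, checking that the exponential pre- and post-factors cancel in the $\pm\tfrac{1}{2}I$ term. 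What each buys: the paper's argument is essentially instantaneous given the established theory of quasiperiodic layer potentials, but it quietly needs the \emph{gradient} of the remainder kernel to be controlled near the diagonal for the normal-derivative jump, and termwise differentiation of \eqref{eq: Grenns_lattice sum} produces terms decaying only like $\lvert q\rvert^{-2}$, which is not absolutely summable over a two-dimensional dual lattice — so that step is more delicate than the paper's phrasing suggests. Your elliptic-regularity argument for $R$ sidesteps exactly this issue and is self-contained (it does not presuppose the $\beta=0$ quasiperiodic jump relations, only the free-space ones), at the cost of being longer; your closing remark identifying this as the main technical obstacle is well placed.
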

\begin{proof}
From \eqref{eq: Grenns_lattice sum}, we know that $\Tilde{G}^{\alpha, \beta,k}_{R}(x)$ is regular around $x=0$, and therefore, $\Tilde{\mathcal{S}}^{\alpha, \beta,k}_D$ satisfies the same jump conditions as $\Tilde{\mathcal{S}}^{\alpha, 0,k}_D$.
\end{proof}
We may represent the solution to the scattering problem \eqref{eq:v_pde} in terms of the single-layer potential as
\begin{equation}\label{eq:intrep}
    v(x) = \begin{cases}
        \Tilde{\mathcal{S}}_D^{\alpha,\beta,k}[\phi](x), \quad&\text{in } Y \setminus \overline{D},\\
         \Tilde{\mathcal{S}}_{D_i}^{\alpha,\beta, k_i}[\psi](x), \quad&\text{in } D_i,
    \end{cases}
\end{equation}
for some unknown densities $\phi\in L^2(\partial D)$ and $\psi_i \in L^2(\partial D_i)$. 
For brevity, we define
\begin{equation}
    \left( \mathcal{N}_D^{\alpha,\beta,k}\right)^* := \left( \Tilde{\mathcal{K}}_D^{\alpha,\beta,k}\right)^*- (\beta\cdot \nu)\Tilde{\mathcal{S}}^{\alpha,\beta,k}_D
\end{equation}
and let $\mathcal{N}_D^{\alpha,\beta,k}$ denote the $L^2$-adjoint of $\left(\mathcal{N}^{\alpha,\beta,k}_D\right)^*$. The rationale for introducing $\mathcal{N}$ is the following identity, which follows from Lemma \ref{lemma: jump conditions} and the product rule:
\begin{equation} \label{eq: SLP product rule}
    \left.\frac{\partial }{\partial \nu}\right|_\pm e^{-\beta \cdot x}\Tilde{\mathcal{S}}^{\alpha,\beta,k}_D[\phi] = e^{-\beta \cdot x}\left( \pm \frac{1}{2}I + \left( \mathcal{N}_D^{\alpha,\beta,k}\right)^*\right)[\phi].
\end{equation}
Based on \eqref{eq:intrep} and \Cref{lemma: jump conditions}, we find that \eqref{eq:v_pde} is equivalent to the system of integral equations
\begin{equation}\label{eq:intA}\A^{\alpha,\beta}(\omega,\delta)\begin{pmatrix} \psi\\ \phi\end{pmatrix} = 0\end{equation}
where the operator $\A^{\alpha,\beta}$ is given by
	\begin{equation}\A^{\alpha,\beta}(\omega,\delta) = \begin{pmatrix}
		\Tilde{\mathcal{S}}_D^{\alpha,\beta,k_i} & - \Tilde{\mathcal{S}}_D^{\alpha,\beta,k} \\ 
		-\frac{1}{2}I + \left( \mathcal{N}_D^{\alpha,\beta,k_i}\right)^*  & -\delta \left(\frac{1}{2}I + \left( \mathcal{N}_D^{\alpha,\beta,k}\right)^* \right)
	\end{pmatrix}.\end{equation}
The problem of finding the complex band structure $\omega = \omega(\alpha,\beta)$ has now been rephrased as the nonlinear eigenvalue problem \eqref{eq:intA}.
 
For general $\beta \neq 0$, the single-layer potential $\Tilde{\mathcal{S}}_D^{\alpha,\beta,k}$ might not be invertible. Nevertheless, for small $\beta$, invertibility follows from the case $\beta =0$.
\begin{lemma}
    For all $q \in \Lambda^*$, assume that 
    \begin{equation}
        \lvert \alpha + q \rvert \neq \lvert \beta \rvert \text{ or } (\alpha + q)\cdot \beta \neq 0.
    \end{equation}
    If $\alpha \neq 0$, then for small enough $\lvert \beta \rvert$, $\Tilde{\mathcal{S}}_D^{\alpha,\beta,0}: L^2(\partial D) \to H^1(\partial D)$ is invertible.
\end{lemma}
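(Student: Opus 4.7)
The plan is to treat the map $\Tilde{\mathcal{S}}_D^{\alpha,\beta,0}$ as a small perturbation of $\Tilde{\mathcal{S}}_D^{\alpha,0,0}$ and invoke the openness of the set of invertible operators in $\mathcal{L}(L^2(\partial D), H^1(\partial D))$. The starting point is the decomposition
\begin{equation*}
    \Tilde{G}^{\alpha,\beta,0}(x) = \Tilde{G}^{\alpha,0,0}(x) + \Tilde{G}^{\alpha,\beta,0}_R(x),
\end{equation*}
with $\Tilde{G}^{\alpha,\beta,0}_R$ given by the absolutely convergent lattice sum \eqref{eq: Grenns_lattice sum} (specialised to $k=0$).

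First, I would show that $\Tilde{G}^{\alpha,\beta,0}_R$ is a smooth function of $x$ on a neighbourhood of $\overline{D-D}$ with
\begin{equation*}
    \bigl\|\Tilde{G}^{\alpha,\beta,0}_R\bigr\|_{C^m} = \mathcal{O}(\lvert \beta\rvert)\qquad (\lvert\beta\rvert\to 0),
\end{equation*}
for any $m\in\N$. Indeed, the numerator in \eqref{eq: Grenns_lattice sum} is $2\i \beta\cdot(\alpha+q)-\lvert\beta\rvert^2$, which is linear in $\beta$ to leading order, while the denominator remains uniformly bounded away from zero in $q$ thanks to the standing non-resonance assumption $\lvert\alpha+q\rvert\neq\lvert\beta\rvert$ or $(\alpha+q)\cdot\beta\neq 0$. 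The resulting series, together with all term-by-term $x$-derivatives, converges absolutely and uniformly on compact sets of $\R^2\setminus\Lambda$, giving the desired smooth $\mathcal{O}(\lvert\beta\rvert)$ bound.

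Second, this smoothness allows one to split
\begin{equation*}
    \Tilde{\mathcal{S}}_D^{\alpha,\beta,0} = \Tilde{\mathcal{S}}_D^{\alpha,0,0} + \mathcal{R}^{\alpha,\beta},\qquad \mathcal{R}^{\alpha,\beta}[\phi](x) := \int_{\partial D}\Tilde{G}^{\alpha,\beta,0}_R(x-y)\phi(y)\,\mathrm{d}\sigma(y).
\end{equation*}
Because $\Tilde{G}^{\alpha,\beta,0}_R$ has no singularity on $\partial D-\partial D$, the remainder $\mathcal{R}^{\alpha,\beta}$ is a smoothing integral operator whose $\mathcal{L}(L^2(\partial D),H^1(\partial D))$-norm is controlled by the $C^1$-norm of its kernel and is therefore $\mathcal{O}(\lvert\beta\rvert)$.

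Third, I would invoke the known fact that $\Tilde{\mathcal{S}}_D^{\alpha,0,0}\colon L^2(\partial D)\to H^1(\partial D)$ is invertible whenever $\alpha\neq 0$; this is the classical result on the static quasiperiodic single-layer potential used throughout \cite{ammari2023functional}. Since the set of invertible operators in $\mathcal{L}(L^2(\partial D),H^1(\partial D))$ is open, a standard Neumann-series argument yields
\begin{equation*}
    \bigl(\Tilde{\mathcal{S}}_D^{\alpha,\beta,0}\bigr)^{-1} = \sum_{n\geq 0}\Bigl(-\bigl(\Tilde{\mathcal{S}}_D^{\alpha,0,0}\bigr)^{-1}\mathcal{R}^{\alpha,\beta}\Bigr)^{n}\bigl(\Tilde{\mathcal{S}}_D^{\alpha,0,0}\bigr)^{-1}
\end{equation*}
whenever $\|(\Tilde{\mathcal{S}}_D^{\alpha,0,0})^{-1}\|\cdot\|\mathcal{R}^{\alpha,\beta}\|<1$, which holds for all sufficiently small $\lvert\beta\rvert$.

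The main technical obstacle is the first step: one has to extract an $\mathcal{O}(\lvert\beta\rvert)$ bound on the remainder in a norm strong enough to yield operator convergence into $H^1(\partial D)$, and not merely pointwise or $L^2$ smallness. This requires controlling the lattice tail uniformly in $x$ together with at least one $x$-derivative; in two dimensions, the $\lvert\alpha+q\rvert^{-4}$-decay of the summands in \eqref{eq: Grenns_lattice sum} after differentiation makes this bound routine, but it is the place where the non-resonance hypothesis and the precise structure of the denominator are essential.
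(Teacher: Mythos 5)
Your proposal is correct and takes essentially the same route as the paper: the paper likewise uses the expansion \eqref{eq:Gasymp} to write $\Tilde{\mathcal{S}}_D^{\alpha,\beta,0} = \mathcal{S}_D^{\alpha,0} + \mathcal{O}(\lvert\beta\rvert)$ in operator norm and then concludes from the invertibility of $\mathcal{S}_D^{\alpha,0}$ together with the openness of the set of invertible operators. Your write-up merely fills in the kernel-smoothness estimate and the Neumann-series step that the paper's two-line proof leaves implicit.
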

\begin{proof}
For small $\beta$, it follows from \eqref{eq:Gasymp} that 
\begin{equation}
\Tilde{\mathcal{S}}_D^{\alpha,\beta,0} = {\mathcal{S}}_D^{\alpha,0} + \O(|\beta|),
\end{equation}
with respect to the operator norm. Since $\mathcal{S}_D^{\alpha,0}$ is invertible \cite{ammari.fitzpatrick.ea2018Mathematical}, it follows that $\Tilde{\mathcal{S}}_D^{\alpha,\beta,0}$ is invertible for small enough $\beta$.
\end{proof}
As we will see, singularities of $\Tilde{\mathcal{S}}_D^{\alpha,\beta,0}$ will play an important role for the behaviour of the subwavelength gap functions.

The next result shows that, as a result of the Neumann eigenvalue $k=0$ of $-\Delta$ inside $D$, the operator $- \frac{1}{2}I + \left(\mathcal{N}^{\alpha,\beta,0}_D\right)^*$ has a nontrivial kernel of dimension $N$.
\begin{lemma}\label{lem:kernel}
    Assume that $D$ has $N$ connected components and that 
    \begin{equation}
        \lvert \alpha + q \rvert \neq \lvert \beta \rvert \text{ or } (\alpha + q)\cdot \beta \neq 0,
    \end{equation}
    for all $q \in \Lambda^*$. Then, provided that $\Tilde{\mathcal{S}}^{\alpha,\beta,0}_D$ is invertible,
    \begin{equation}
        \operatorname{ker}\left(- \frac{1}{2}I + \left(\mathcal{N}^{\alpha,\beta,0}_D\right)^*\right) \text{ and } \operatorname{ker}\left(- \frac{1}{2}I + \mathcal{N}^{\alpha,\beta,0}_D \right)    
    \end{equation}
    are $N$-dimensional and spanned, respectively, by $\psi_i$ and $\phi_i$ given by 
    \begin{equation}
        \psi_i = \left(\Tilde{\mathcal{S}}^{\alpha,\beta,0}_D\right)^{-1}[e^{\beta \cdot x} \chi_{\partial D_i}], \quad \phi_i = e^{-\beta \cdot x}\chi_{\partial D_i}, \quad i = 1, \dots, N.
    \end{equation}
\end{lemma}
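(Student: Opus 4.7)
The strategy is to reduce the claim to the known $\beta=0$ case---i.e., to the classical description of the kernels of $-\tfrac12 I+\mathcal K_D^{\alpha,0}$ and its adjoint---by conjugating with the multiplication operator $M_{\gamma}\colon\phi(x)\mapsto e^{\gamma\cdot x}\phi(x)$, which is a self-adjoint isomorphism of $L^{2}(\partial D)$ since $e^{\gamma\cdot x}$ is real and nowhere vanishing.

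The first step is to exploit the factorisation $\widetilde G^{\alpha,\beta,0}(x-y)=e^{\beta\cdot(x-y)}G^{\alpha,0}(x-y)$ to derive the two conjugation identities
\begin{equation*}
\widetilde{\mathcal S}_D^{\alpha,\beta,0}=M_{\beta}\,\mathcal S_D^{\alpha,0}\,M_{-\beta},\qquad \bigl(\mathcal N_D^{\alpha,\beta,0}\bigr)^{\!*}=M_{\beta}\,\bigl(\mathcal K_D^{\alpha,0}\bigr)^{\!*}\,M_{-\beta}.
\end{equation*}
The first is immediate; for the second, the product rule gives
\begin{equation*}
\partial_{\nu(x)}\widetilde G^{\alpha,\beta,0}(x-y)=\bigl(\beta\cdot\nu(x)\bigr)\widetilde G^{\alpha,\beta,0}(x-y)+e^{\beta\cdot(x-y)}\partial_{\nu(x)}G^{\alpha,0}(x-y),
\end{equation*}
and the $(\beta\cdot\nu)$-term is precisely cancelled by the subtraction of $(\beta\cdot\nu)\widetilde{\mathcal S}^{\alpha,\beta,0}_D$ in the definition of $(\mathcal N_D^{\alpha,\beta,0})^{*}$---this is exactly the motivation behind \eqref{eq: SLP product rule}. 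Taking $L^{2}$-adjoints and using $M_{\gamma}^{*}=M_{\gamma}$ yields the dual identity $\mathcal N_D^{\alpha,\beta,0}=M_{-\beta}\,\mathcal K_D^{\alpha,0}\,M_{\beta}$.

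I then invoke the classical quasiperiodic Neumann--Poincaré theorem from \cite{ammari.fitzpatrick.ea2018Mathematical}: whenever $\mathcal S_D^{\alpha,0}$ is invertible, the kernels of $-\tfrac12 I+(\mathcal K_D^{\alpha,0})^{*}$ and $-\tfrac12 I+\mathcal K_D^{\alpha,0}$ are both $N$-dimensional, spanned respectively by $(\mathcal S_D^{\alpha,0})^{-1}[\chi_{\partial D_i}]$ and $\chi_{\partial D_i}$. Via the first conjugation identity, the hypothesised invertibility of $\widetilde{\mathcal S}_D^{\alpha,\beta,0}$ is equivalent to invertibility of $\mathcal S_D^{\alpha,0}$, so this result applies. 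Since $M_{\pm\beta}$ are bijections, conjugation preserves dimensions and transfers bases directly:
\begin{equation*}
\ker\!\bigl(-\tfrac12 I+(\mathcal N_D^{\alpha,\beta,0})^{*}\bigr)=M_{\beta}\,\mathrm{span}\bigl\{(\mathcal S_D^{\alpha,0})^{-1}[\chi_{\partial D_i}]\bigr\}=\mathrm{span}\bigl\{(\widetilde{\mathcal S}_D^{\alpha,\beta,0})^{-1}[e^{\beta\cdot x}\chi_{\partial D_i}]\bigr\},
\end{equation*}
the last equality reusing $\widetilde{\mathcal S}_D^{\alpha,\beta,0}=M_{\beta}\mathcal S_D^{\alpha,0}M_{-\beta}$. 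The same argument applied to $\mathcal N_D^{\alpha,\beta,0}=M_{-\beta}\mathcal K_D^{\alpha,0}M_{\beta}$ produces $\ker(-\tfrac12 I+\mathcal N_D^{\alpha,\beta,0})=\mathrm{span}\{e^{-\beta\cdot x}\chi_{\partial D_i}\}$, as required.

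The only delicate part is the product-rule computation establishing the conjugation identity for $(\mathcal N_D^{\alpha,\beta,0})^{*}$---one must carefully track the sign and which variable the normal derivative acts on, and recognise that the extra $(\beta\cdot\nu)\widetilde{\mathcal S}$-term is exactly annihilated by the definition of $\mathcal N^{*}$. Once this identity is in hand, everything else is a transparent translation of the classical quasiperiodic theorem through the isomorphism $M_{\beta}$.
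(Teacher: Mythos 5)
Your conjugation strategy is the right idea in spirit --- it is the operator-level version of the substitution $v=e^{-\beta\cdot x}u$ that the paper uses --- but it rests on the identity $\Tilde{G}^{\alpha,\beta,0}(x-y)=e^{\beta\cdot(x-y)}G^{\alpha,0}(x-y)$, and that identity is false for the Green's function the lemma is actually about. The operational definition of $\Tilde{G}^{\alpha,\beta,k}$ is the lattice sum \eqref{eq: Green's function}: this is the kernel that is quasiperiodic with \emph{real} quasimomentum $\alpha$ (as needed for the representation \eqref{eq:intrep} to satisfy the Floquet condition in \eqref{eq:v_pde}) and that solves \eqref{eq: formula BAndgap greens function}. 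By contrast, $e^{\beta\cdot x}G^{\alpha,k}(x)$ picks up the factor $e^{(\i\alpha+\beta)\cdot\ell}$ under $x\mapsto x+\ell$ and solves \eqref{eq: formula BAndgap greens function} with right-hand side $\sum_{m}e^{(\i\alpha+\beta)\cdot m}\delta(x-m)$; the two functions differ whenever $\beta\neq 0$. (The displayed ``definition'' $\Tilde{G}^{\alpha,\beta,k}=e^{\beta\cdot x}G^{\alpha,k}$ in the paper, which presumably misled you, is really $e^{\beta\cdot x}G^{\alpha+\i\beta,k}$ with the analytically continued Green's function at \emph{complex} quasimomentum; that is the expression consistent with \eqref{eq: Green's function}.) Consequently your identities $\Tilde{\mathcal{S}}_D^{\alpha,\beta,0}=M_\beta\,\mathcal{S}_D^{\alpha,0}\,M_{-\beta}$ and $(\mathcal{N}_D^{\alpha,\beta,0})^*=M_\beta\,(\mathcal{K}_D^{\alpha,0})^*\,M_{-\beta}$ do not hold: the correct conjugation lands on the layer potentials at quasimomentum $\alpha+\i\beta$, for which the classical theorem you cite from \cite{ammari.fitzpatrick.ea2018Mathematical} (stated for real $\alpha$) is not directly available --- establishing its complex-quasimomentum analogue is precisely the content of the lemma, so the reduction as written is circular. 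A concrete symptom: the hypotheses admit $\alpha=0$, $\beta\neq 0$ (this case occurs at the $\Gamma$ point in the paper's numerics), where $\Tilde{\mathcal{S}}_D^{0,\beta,0}$ is perfectly well defined but $\mathcal{S}_D^{0,0}$ does not exist, the $q=0$ term of $G^{0,0}$ being divergent; no conjugation by a multiplication operator can relate the two.

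The paper instead argues directly. For $\psi_i$ it sets $v=e^{-\beta\cdot x}\Tilde{\mathcal{S}}_D^{\alpha,\beta,0}[\psi_i]$, observes that $\Delta v=0$ in $D$ with $v=\chi_{\partial D_i}$ on $\partial D$, hence $v\equiv\chi_{D_i}$, and reads off $\bigl(-\tfrac12 I+(\mathcal{N}_D^{\alpha,\beta,0})^*\bigr)[\psi_i]=0$ from \eqref{eq: SLP product rule}; the converse inclusion uses uniqueness for the interior Neumann problem. For $\phi_i$ it uses the duality pairing $\bigl\langle\psi,(-\tfrac12 I+\mathcal{N}_D^{\alpha,\beta,0})[\phi_i]\bigr\rangle=\int_{\partial D_i}\partial_\nu v\,\mathrm{d}\sigma=0$ together with a dimension count. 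If you wish to keep the conjugation framework, you must first prove the kernel characterisation for $-\tfrac12 I+(\mathcal{K}_D^{\alpha+\i\beta,0})^*$ at complex quasimomentum, which amounts to redoing exactly this interior-problem argument.
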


\begin{proof}
    We begin with $\psi_i$. Let $u(x) = \Tilde{\mathcal{S}}^{\alpha,\beta,0}_D[\psi_i]$ and let $v(x) = e^{-\beta \cdot x}u(x)$. Then $\Delta v = 0$ in $D$ and $v(x) = \chi_{\partial D_i}$ on $\partial D$, so $v(x) = \chi_{D_i}$ in $D$. From the jump relations (Lemma \ref{lemma: jump conditions}) and the product rule $(\ref{eq: SLP product rule})$, we have $\left(-\frac{1}{2}I + \left(\mathcal{N}^{\alpha,\beta,0}_D\right)^*\right)[\psi_i] =0 $. Conversely, if $\left(-\frac{1}{2}I + \left(\mathcal{N}^{\alpha,\beta,0}_D\right)^*\right)[\psi] =0$ for some $\psi \in L^2(\partial D)$, we have that $v$ defined as $v(x) = e^{-\beta \cdot x} \Tilde{\mathcal{S}}^{\alpha,\beta,0}_D[\psi]$ satisfies the interior Neumann problem on $D$, and hence is constant.
    
    We now turn to $\phi_i$. Take $\psi \in L^2(\partial D)$ and let $v(x) = e^{-\beta \cdot x}\Tilde{\mathcal{S}}^{\alpha,\beta,0}_D[\psi]$, so that $\Delta v = 0$ in $D$. Then we have
    \begin{align}
        \left\langle \psi, \left(-\frac{1}{2}I + \mathcal{N}^{\alpha,\beta,0}_D \right)[\phi_i]\right\rangle &= \int_{\partial D_i} e^{-\beta \cdot x} \left(-\frac{1}{2}I + \left(\mathcal{N}^{\alpha,\beta,0}_D\right)^*\right)[\psi] \mathrm{d} \sigma \\
        &= \int_{\partial D_i} \frac{\partial v}{\partial \nu} \mathrm{d} \sigma\\
        &= 0,
    \end{align}
    so that $\phi_i \in \operatorname{ker}\left(- \frac{1}{2}I + \mathcal{N}^{\alpha,\beta,0}_D \right)$. Since the two kernels have equal dimensions, it follows that $\phi_i$ spans $\operatorname{ker}\left(- \frac{1}{2}I + \mathcal{N}^{\alpha,\beta,0}_D\right)$.
\end{proof}
For the limiting value $\delta = 0$, \Cref{lem:kernel} shows that $\omega=0$ is an eigenvalue of \eqref{eq:intA}. For small but nonzero $\delta$, these eigenvalues are perturbed into subwavelength eigenvalues characterised by the following capacitance theorem for band gap  frequencies.
\begin{theorem}\label{thm: subwavelength resonant frequencies}
    Consider a system of $N$ subwavelength resonators in the unit cell $Y$ and assume
    \begin{equation}
        \lvert \alpha + q \rvert^2 \neq \lvert \beta \rvert^2 \text{ or } (\alpha + q)\cdot \beta \neq 0,
    \end{equation}
    for all $q \in \Lambda^*$ and assume that $\Tilde{\mathcal{S}}^{\alpha,\beta,0}_D$ is invertible. As $\delta \to 0$, there are, up to multiplicity, $N$ subwavelength resonant frequencies $\omega^{\alpha, \beta}_n$, for $n = 1, \dots, N$, which satisfy the asymptotic formula
    \begin{equation}
        \omega^{\alpha, \beta} = \sqrt{\delta\lambda^{\alpha, \beta}_n} + \mathcal{O}(\delta), \quad n = 1,\dots, N,
    \end{equation}
    where $\{\lambda^{\alpha, \beta}_n\}$ are the $N$ eigenvalues of the generalised capacitance matrix $\mathcal{C}^{\alpha, \beta} \in \C^{N \times N}$, given by
    \begin{equation}\label{eq: def Capacitance matrix}
        \mathcal{C}_{ij}^{\alpha,\beta} = - \frac{v_i^2}{\lvert D_i \rvert}\int_{D_i} e^{-\i  \beta \cdot x}\psi_j \mathrm{d} \sigma, \quad \psi_i = (\tilde{\mathcal{S}}^{\alpha,\beta,0}_D)^{-1}[e^{\beta \cdot x} \chi_{\partial D_i}].
    \end{equation}
\end{theorem}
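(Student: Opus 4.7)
The plan is to analyse the characteristic values of the operator-valued map $\A^{\alpha,\beta}(\omega,\delta)$ near the degenerate point $(\omega,\delta)=(0,0)$ via the generalised Rouch\'e theorem of Gohberg and Sigal \cite{gohberg1971operator}. Writing $\tilde G^{\alpha,\beta,k}=e^{\beta\cdot x}G^{\alpha,k}$ and Taylor-expanding $G^{\alpha,k}$ in $k^2$ around $k=0$ term-by-term in \eqref{eq: real Green's function}, the single-layer potential and the associated Neumann--Poincar\'e operator admit norm-convergent expansions
\begin{equation}
\tilde{\mathcal{S}}_D^{\alpha,\beta,k}=\tilde{\mathcal{S}}_D^{\alpha,\beta,0}+k^2\,\tilde{\mathcal{S}}_{D,1}^{\alpha,\beta}+\O(k^4), \qquad (\mathcal{N}_D^{\alpha,\beta,k})^*=(\mathcal{N}_D^{\alpha,\beta,0})^*+k^2(\mathcal{N}_{D,1}^{\alpha,\beta})^*+\O(k^4),
\end{equation}
and analogously with $k$ replaced by $k_i=\omega/v_i$ inside each resonator. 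Hence $\A^{\alpha,\beta}(\omega,\delta)$ is analytic in $(\omega,\delta)$ near $(0,0)$.

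The invertibility hypothesis on $\tilde{\mathcal{S}}_D^{\alpha,\beta,0}$ forces $\psi=\phi$ in the first block-row of $\A^{\alpha,\beta}(0,0)(\psi,\phi)^{T}=0$, and by \Cref{lem:kernel} the second block-row then restricts $\psi$ to the $N$-dimensional span of $\{\psi_i\}_{i=1}^N$. The corresponding left cokernel is spanned by $\{\phi_i\}_{i=1}^N$ with $\phi_i=e^{-\beta\cdot x}\chi_{\partial D_i}$. Applying the generalised Rouch\'e theorem to $\A^{\alpha,\beta}(\cdot,\delta)$ on a small disc around $\omega=0$, the $N$-fold characteristic value at $\omega=0$ perturbs into $N$ characteristic values $\omega_n^{\alpha,\beta}(\delta)\to 0$ as $\delta\to 0$. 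To extract their leading-order behaviour, I make the ansatz $\psi=\sum_j c_j\psi_j+\O(\omega^2+\delta)$ and $\phi=\sum_j c_j\psi_j+\O(\omega^2+\delta)$, plug into $\A^{\alpha,\beta}\bigl(\begin{smallmatrix}\psi\\\phi\end{smallmatrix}\bigr)=0$, and project the second block-row onto the cokernel basis $\{\phi_i\}$. The zeroth-order term vanishes by duality. The $k_j^2$-correction is evaluated by recognising that $v_j:=e^{\beta\cdot x}\tilde{\mathcal{S}}_D^{\alpha,\beta,k_j}[\psi_j]$ solves the transformed Helmholtz equation of \eqref{eq:v_pde} inside $D_i$ and coincides with $\chi_{D_j}$ at $\omega=0$; Green's identity on $D_i$ then yields a diagonal contribution proportional to $-(\omega^2/v_j^2)|D_j|\,\delta_{ij}c_j$. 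Pairing the $\delta$-term on the right-hand side with $\phi_i$ produces, at leading order, $\delta$ times the matrix entry of \eqref{eq: def Capacitance matrix}. Balancing the two contributions gives the reduced $N\times N$ generalised eigenvalue problem $\omega^2\mathbf{c}=\delta\,\mathcal{C}^{\alpha,\beta}\mathbf{c}+\O(\delta^2)$, and taking square roots of its eigenvalues yields the announced asymptotic formula.

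The main obstacle is the careful bookkeeping of the exponential weight $e^{\pm\beta\cdot x}$ as it propagates through the jump relation \eqref{eq: SLP product rule}, the Taylor expansion of the band gap Green's function, and Green's identity applied to the transformed problem \eqref{eq:v_pde}. Because $\A^{\alpha,\beta}$ is no longer self-adjoint for $\beta\neq 0$, the kernel and cokernel are distinct, and it is precisely the asymmetric choice of dual basis $\phi_i=e^{-\beta\cdot x}\chi_{\partial D_i}$ (rather than $\chi_{\partial D_i}$) that produces the exponential weight appearing in the integrand of \eqref{eq: def Capacitance matrix}. Once this weighted reduction is established, the remainder of the argument is strictly parallel to the real-quasimomentum capacitance reduction developed in \cite{ammari2023functional,ammari.fitzpatrick.ea2018Mathematical}.
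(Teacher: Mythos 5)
Your argument is correct and follows essentially the same route as the paper: both rest on the kernel/cokernel identification of \Cref{lem:kernel}, the Gohberg--Sigal perturbation of the $N$-fold characteristic value at $(\omega,\delta)=(0,0)$, and the pairing of the $\delta$-block against the dual basis $\phi_i=e^{-\beta\cdot x}\chi_{\partial D_i}$ --- which is precisely the pole-pencil/capacitance reduction that the paper imports as a black box from \cite[Appendix A]{ammari2023functional} rather than unpacking by hand as you do. The only slip is the exponential weight in your Green's-identity step: it is $e^{-\beta\cdot x}\tilde{\mathcal{S}}_D^{\alpha,\beta,k_j}[\psi_j]$ (not $e^{+\beta\cdot x}\tilde{\mathcal{S}}_D^{\alpha,\beta,k_j}[\psi_j]$) that solves the plain Helmholtz equation and reduces to $\chi_{D_j}$ at $\omega=0$, consistent with the proof of \Cref{lem:kernel} and with the jump identity \eqref{eq: SLP product rule}; with that correction your diagonal term $-(\omega^2/v_j^2)\lvert D_j\rvert\delta_{ij}$ and the resulting reduced eigenvalue problem match the paper's conclusion.
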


\begin{proof}
We follow the abstract capacitance-matrix formulation of \cite[Appendix A]{ammari2023functional}. Observe that 
\begin{equation}
    \mathcal{A}^{\alpha,\beta}(\omega, \delta) = \mathcal{A}^{\alpha,\beta}(\omega, 0) + \mathcal{L}(\omega, \delta),
\end{equation}
where
\begin{equation}
    \mathcal{L}(\omega, \delta) = - \delta \begin{pmatrix}
        0 & 0 \\
        0 & \frac{1}{2}I + (\mathcal{N}^{\alpha,\beta,k}_D)^*
    \end{pmatrix}, \quad \mathcal{L}_0 = - \begin{pmatrix}
        0 & 0 \\ 0 & \frac{1}{2}I + (\mathcal{N}^{\alpha,\beta,0}_D)^*
    \end{pmatrix}.
\end{equation}
Asymptotically, we may expand $\mathcal{L}(\omega, \delta) = \delta \mathcal{L}_0 + \mathcal{O}(\omega \delta)$, for $\omega$ and $\delta$ close to $0$. At $\delta = 0$ and for $\omega$ close to $0$, we have
\begin{equation}\label{eq:polepencil}
	\A(\omega,0)^{-1} = \frac{K}{\omega^2} + \Rc(\omega), \quad \text{ for } \quad K = \sum_{i=1}^N \langle \Phi_i, \cdot\rangle \Psi_i,
\end{equation}
where $\ker(\A(0,0)) = \mathrm{span}\{\Psi_j\}$, $\ker(\A^*(0,0)) = \mathrm{span}\{\Phi_j\}$ and $\Rc$ is holomorphic for $\omega$ in a neighbourhood of $0$.
Here, we have the kernel basis functions
\begin{equation}
    \Psi_i = \begin{pmatrix}
        \psi_i \\ \psi_i
    \end{pmatrix}, \quad \Phi_i = -\frac{v_i^2}{\lvert D_i \rvert}\begin{pmatrix}
        0 \\ \phi_i
    \end{pmatrix},
\end{equation}
for $i = 1,\dots, N$, where
\begin{equation}
    \psi_i = (\tilde{\mathcal{S}}^{\alpha,\beta, 0}_D)^{-1}[e^{\beta \cdot x} \chi_{\partial D_i}], \quad \phi_i = e^{-\beta \cdot x} \chi_{\partial D_i}, \quad i= 1, \dots, N.
\end{equation}
Then the generalised capacitance coefficients are $\mathcal{C}_{ij} = -\langle \Phi_i, \mathcal{L}_0\Psi_j \rangle.$ We then have
\begin{align}
    \mathcal{C}_{ij} &= - \frac{\delta v_i^2}{\lvert D_i \rvert}\int_{D_i} e^{-\beta \cdot x}\left(\frac{1}{2}I + (\mathcal{N}^{\alpha,\beta,0}_D)^*\right)[\psi_j]\mathrm{d} \sigma\\
    &= \frac{\delta v_i^2}{\lvert D_i \rvert}\int_{D_i} e^{-\beta \cdot x}\left(-\frac{1}{2}I + (\mathcal{N}^{\alpha,\beta,0}_D)^*\right)[\psi_j] \mathrm{d} \sigma - \frac{\delta v_i^2}{\lvert D_i \rvert}\int_{D_i} e^{-\beta \cdot x}\psi_j \mathrm{d}  \sigma\\
    &=-\frac{\delta v_i^2}{\lvert D_i \rvert}\int_{D_i} e^{-\beta \cdot x} \psi_j \mathrm{d} \sigma.\label{eq: capacitance evaluation}
\end{align}
The assertion follows by \cite[Appendix A]{ammari2023functional}.
\end{proof}

\subsection{Numerical computations}
In this section, we illustrate our results numerically using the techniques presented in Appendix \ref{Numerical Analysis} in the case where the resonators  are circles of some radius $R$. We consider both a single resonator inside the unit cell and a dimer. In the case of a dimer, we have two subwavelength bulk bands, and we explore how the band gap between them is filled by the gap functions. Throughout, we restrict to the subwavelength frequency regime and compute the complex band structure based on the capacitance formulation in \Cref{thm: subwavelength resonant frequencies}.

\subsubsection{Single resonator}
We consider a single resonator of radius $R=0.05$ in a square lattice with lattice constant $L=1$ (c.f. \Cref{fig:square_lattice}). Figures \ref{Fig: 2D_diagonal} and \ref{Fig: 2D_vertical} illustrate the band functions (black) and gap functions (red) for $\alpha$  along the irreducible Brillouin zone $\Gamma M X \Gamma$  and along two directions of $\beta$.
The plots can be understood as follows. The band functions and gap functions are functions $\omega = \omega(\alpha,\beta)$ along paths in $(\alpha,\beta)$-space such that $\omega(\alpha,\beta)$ is real. For a real quasimomentum, i.e., $\beta = 0$, this reduces to the classical band functions. For complex quasimomenta, i.e., $\beta \neq 0$, we chose beta along a specified direction and, for each $\alpha$, with magnitude given by roots of the equation
\begin{equation}\label{eq:Im}
\mathrm{Im}\bigl(\omega(\alpha,\beta)\bigr) = 0.
\end{equation}
Numerically, we solve the nonlinear equation \eqref{eq:Im} using Muller's method.

\begin{figure}[htb]
    \centering
    \subfloat[][Complex band structure: band functions (black) and gap functions (red).]{{\includegraphics[width=0.7\linewidth]{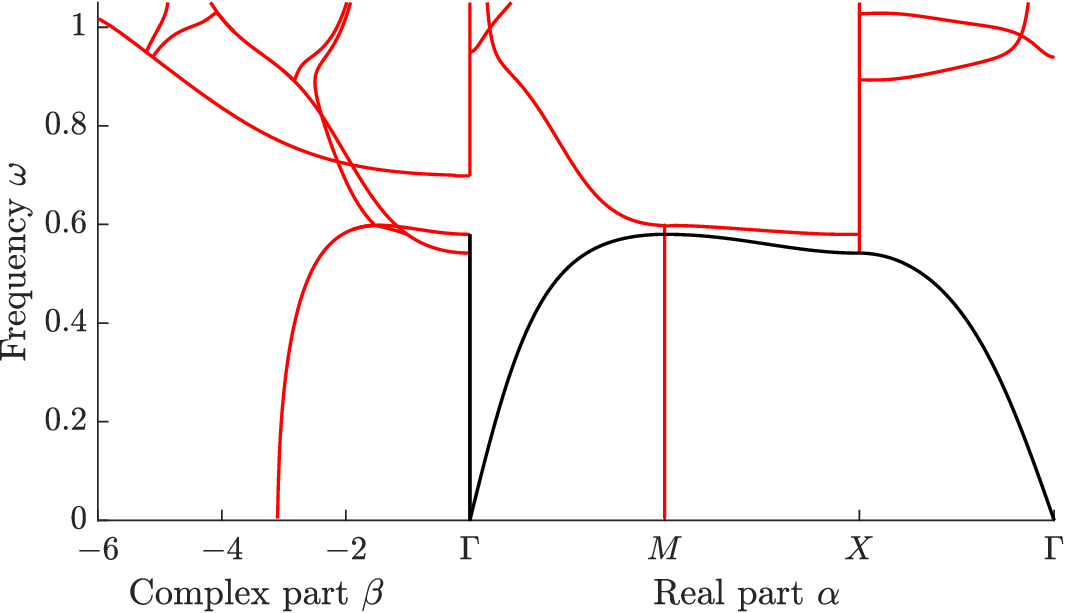}}}\hfill
    \subfloat[][Path for $\alpha$ and $\beta$.]{{\includegraphics[width=0.23\linewidth]{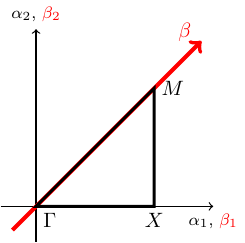} \vspace{11pt}  }}
    \caption{Complex band structure (A) for a square lattice of subwavelength resonators with diagonal evanescent direction (B). The system has a band gap above $\omega \approx 0.6$, and there are multiple branches of the complex band structure, both above and below the band edge. The left-hand side of (A) shows $\omega$ as function of $\beta$ while the right-hand side shows $\omega$ as function of $\alpha$.}
   \label{Fig: 2D_diagonal}
\end{figure}

\begin{figure}[htb]
    \centering
    \subfloat[][Complex band structure: band functions (black) and gap functions (red).]{{\includegraphics[width=0.7\linewidth]{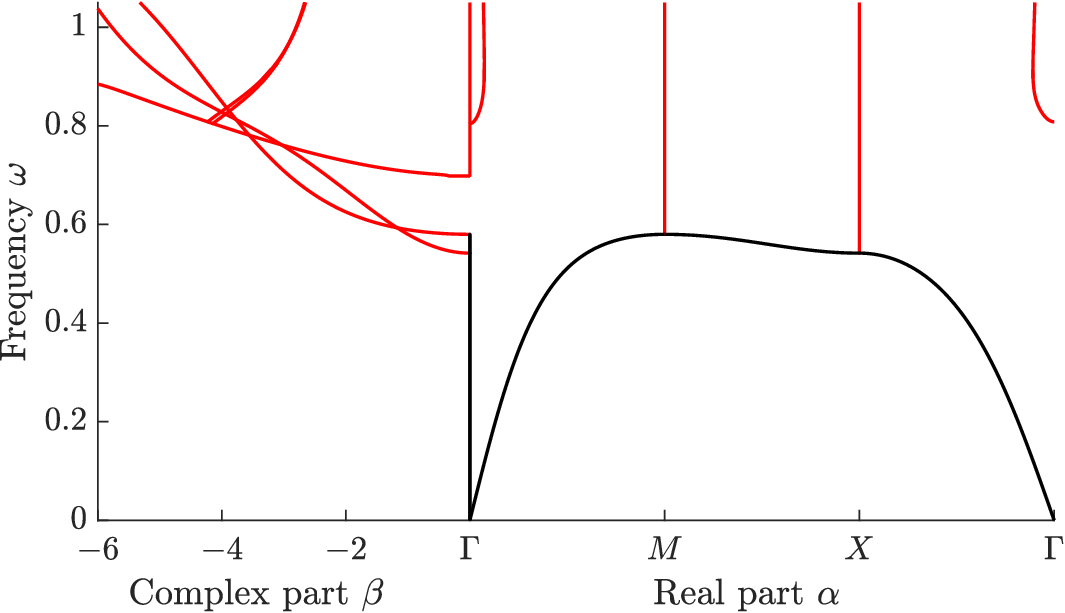} }}\hfill
    \subfloat[][Path for $\alpha$ and $\beta$.]{{\includegraphics[width=0.23\linewidth]{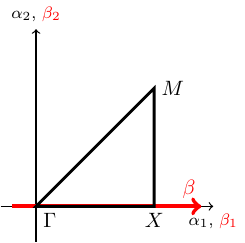} \vspace{11pt} }}
    \caption{We use the same numerical setup as in Figure \ref{Fig: 2D_diagonal}, but now take $\beta$ along a horizontal axis.}
   \label{Fig: 2D_vertical}
\end{figure}

In \Cref{Fig: 2D_diagonal} (A), we show the roots of \eqref{eq:Im} for $\beta$ along the diagonal direction (parallel to $M$). The figure format follows \cite{EvanescentWaves},  where the left-hand side shows $\omega$ as function of the complex part $\beta$, while the right-hand side shows $\omega$ as function of the real part $\beta$. The band structure has a band gap above $\omega\approx 0.6$. Nevertheless, the complex band structure covers the entire frequency range in the sense that any frequency is associated to either a bulk mode or an evanescent mode. 

\Cref{Fig: 2D_vertical} shows the same numerical setup as \Cref{Fig: 2D_diagonal}, but with horizontal evanescent direction $\beta$. Again, the complex band structure covers the entire frequency range. Interestingly, seen in both \Cref{Fig: 2D_diagonal} and  \Cref{Fig: 2D_vertical}, there are multiple branches of the gap functions, even though this structure has only a single resonator in the unit cell and a single band function. Remarkably, there are even complex branches that drop below the bulk band edge and, for $\beta$ along the diagonal, there is a branch at zero frequency $\omega =0$.

For certain values $(\alpha,\beta)$, the single-layer potential might fail to be invertible, and the capacitance matrix is singular close to these points (c.f. equation \eqref{eq: def Capacitance matrix}). By leveraging these singularities, we penetrate deeply inside the band gap while $\beta$ stays relatively small. This is of particular interest as it can be seen as a cap of the decay up to a certain band gap frequency. More concretely, as the frequency $\omega$ diverges at this point, a solution with frequency deep inside the band gap does not decay much faster than a solution with frequency close to the band edge. In the dilute regime (i.e., for small radius  $R$ of the resonators), the singularities appear periodically in $\beta$, and can be understood as perturbations of the Rayleigh singularities \eqref{eq:Rayleigh}. Figure \ref{Fig: Dilute Regime} shows the band gap function $\omega(\alpha,\beta)$ for $\alpha $ fixed at $\alpha = (\pi,\pi)$, and we observe poles of $\omega$ for $\beta$ close to the points given by \eqref{eq:Rayleigh}.
\begin{figure}[htb]
    \centering
    \subfloat[][Surface plot of the capacitance viewed from the top. Singularities under the form of peaks (yellow) and pits (dark blue).]{{\includegraphics[width=6cm]{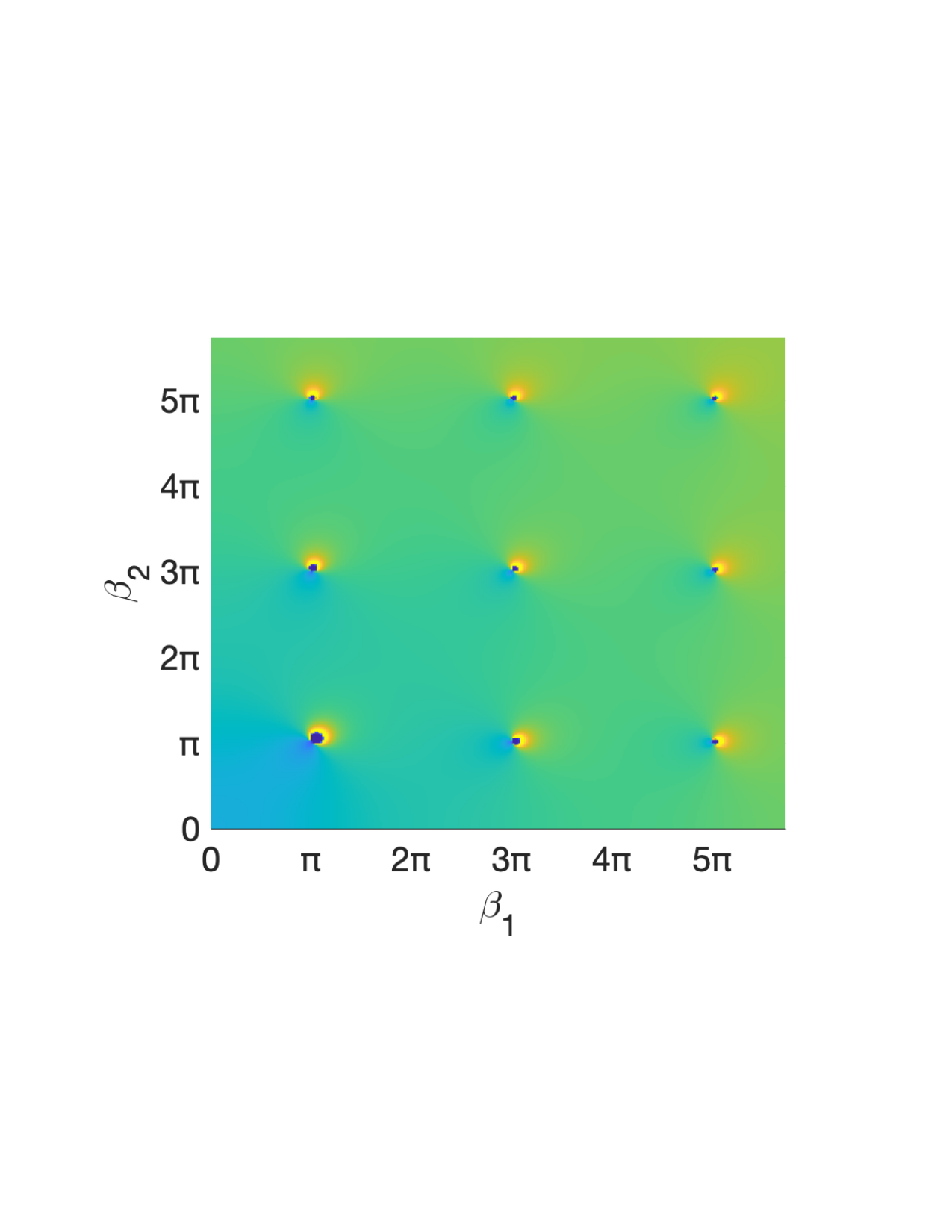} }}
    \qquad
    \subfloat[][Surface plot viewed from the side.]{{\includegraphics[width = 6cm]{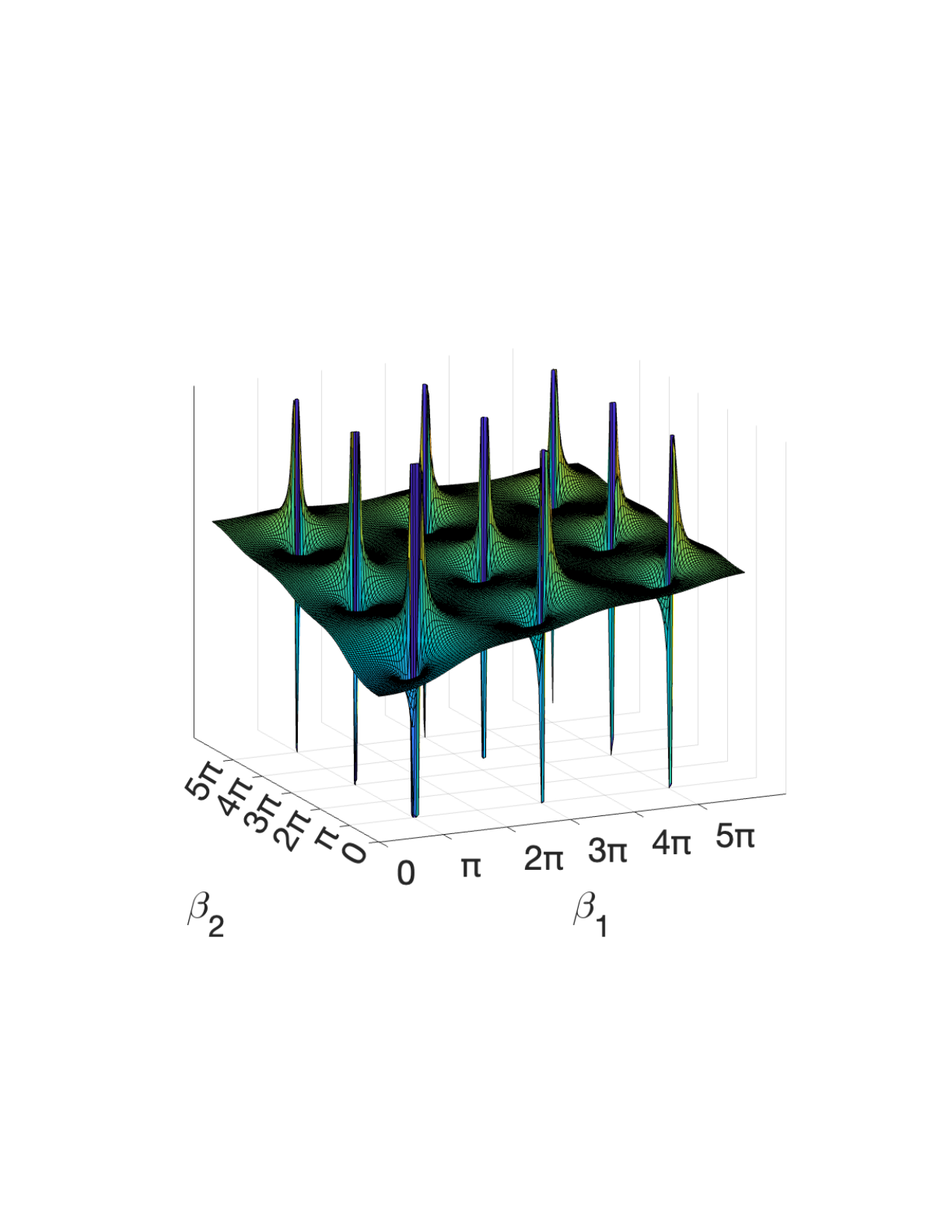} }}
    \caption{In the dilute regime, the singularities of the capacitance are located close to the points given by \eqref{eq:Rayleigh}. Computation performed for small resonators $R = 0.005$.}
   \label{Fig: Dilute Regime}
\end{figure}

\begin{remark}\label{rem: diff 2d and 1d}
There are multiple differences between the one-dimensional and two-dimensional cases. Crucially, the transfer matrix approach of \Cref{sec:general} does not generalise to two dimensions, and there are (in general) more than two real or complex bands associated to each frequency $\omega$. Moreover, there is no equivalent to Theorem \ref{Thm: alpha in the band gap} in two dimensions, and there are bands for which neither $\alpha$ nor $\beta$ is fixed to a high-symmetry point.
\end{remark}

\subsubsection{Dimer case.}
We now turn to the case of two resonators situated in the unit cell (c.f. \Cref{fig: 2D_Dimer} (B)). Again, we take $L=1$ and, for simplicity, assume that they have equal radius $R=0.05$. The  complex  band structure is shown in \Cref{fig: 2D_Dimer} (A) for diagonal $\beta$. There are now two bulk bands separated by a gap. As before, the complex band structure covers the entire frequency regime, and there are branches which fill the band gap region. 

\begin{figure}[htb]
    \centering
    \subfloat[][Complex band structure: band functions (black) and gap functions (red).]{{\includegraphics[width=0.7\linewidth]{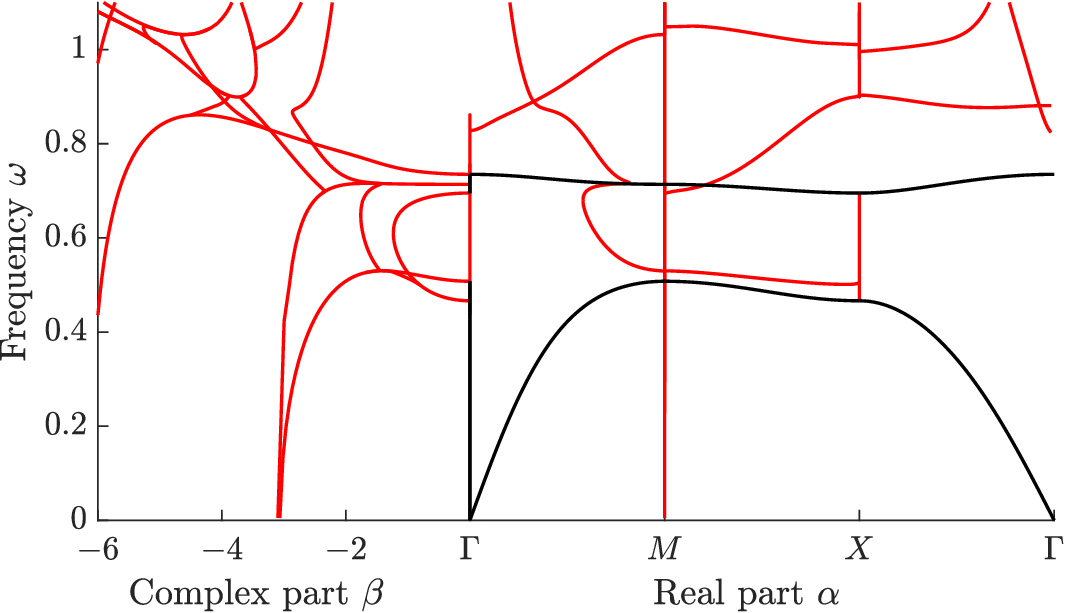}}}\hfill
    \subfloat[][Sketch of the structure.]{{\includegraphics[width=0.25\linewidth]{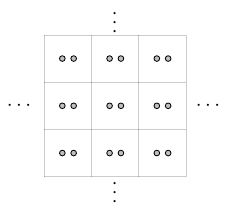} \vspace{20pt}}}
    \caption{Complex band structure (A) for a square lattice of dimers sketched in (B). In this case, there are two bulk bands separated by a gap. As before, the complex band structure covers the entire frequency range. The path for the complex quasimomentum $\beta$ is identical to Figure \ref{Fig: 2D_diagonal} (B).}
   \label{fig: 2D_Dimer}
\end{figure}

\section{Concluding remarks}\label{sec: Conclusion}
We have developed a new approach to characterise evanescent waves in band gap structures, and have derived an analytical and numerical framework on how those band functions may be generated starting from the band gap Green's function. The complex band structure covers the entire frequency range, and can be used to predict the decay rate of band gap evanescent waves. In two dimensions, we have generalised the multipole method for numerical computation of the complex band structure, and have numerically illustrated our results in a number of cases. 

The bottleneck of the numerical calculations is the evaluation of the two-dimensional series \eqref{eq:Sr}. In order to improve calculations, as well as enable complex band calculations of three-dimensional crystals, we plan to derive accelerated representations of this series. Another fundamental problem will be to prove that the decay length of topologically protected edge modes can be accurately predicted by the band gap functions of corresponding bulk structure in two-dimensional structures.

\section{Data availability}
No datasets were generated or analysed during the current study.

\appendix

\section{Numerical methods} \label{Numerical Analysis}

This section introduces numerical techniques to compute the complex band function and discusses their respective convergence rates.
\subsection{Lattice sum} Computing the single layer potential \eqref{def: single layer potential}  involves evaluating a sum over a $2$-dimensional infinite lattice. 
We will briefly touch upon the convergence of a finite, truncated lattice sum in the definition of $\Tilde{G}^{\alpha, \beta}_{R}(x)$. By using the integral test, one can easily show that the lattice sum is absolutely convergent. 
\begin{figure}[ht]
    \centering
    \includegraphics[width = 12cm]{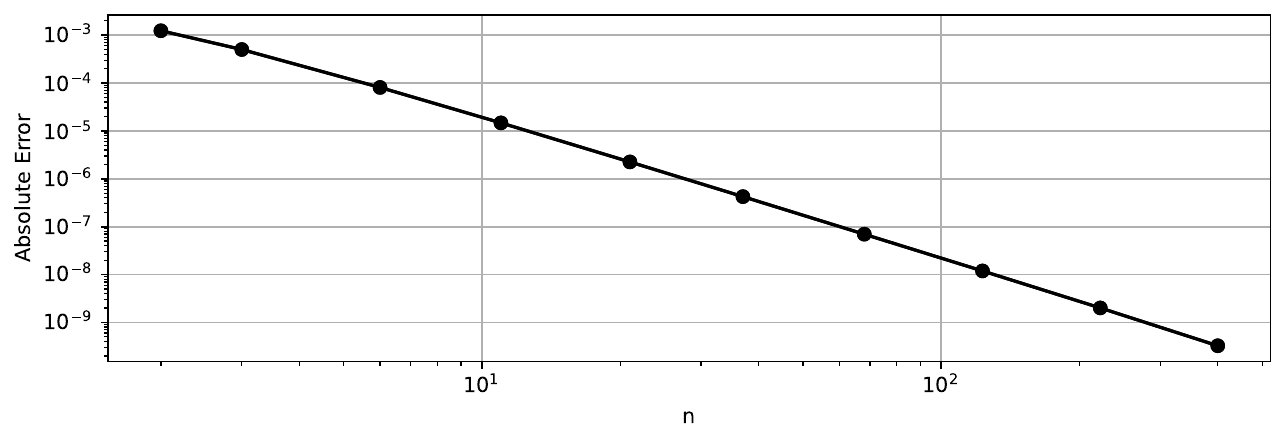}
    \caption{Error in the truncated lattice sum for a lattice of size $[-n, n] \times [-n, n]$ against a lattice with size $n = 1000$. Computed across varying parameters $\alpha$, $\beta$, and $\omega$.}
    \label{fig: convergence rate}
\end{figure}
The numerical example for the truncation error performed in Figure \ref{fig: convergence rate} reveals that the convergence rate is approximately of order 3. For numerical computations, we will therefore consider a lattice of size $n = 10$.
Because of the slow convergence of the lattice sum of $\Tilde{G}^{\alpha, \beta}$ we have to revert to a different technique, see \cite[Section 5.4.2.3.]{ammari.fitzpatrick.ea2018Mathematical} for more details on the efficient computation of the lattice sum in the setting of subwavelength resonators.

\subsection{Multipole expansion method}
Since both lattice sums defining $\Tilde{G}^{\alpha, 0}$ and $\Tilde{G}^{\alpha, \beta}_R$ are convergent, we can define the single layer potential associated to the complex Green's function
\begin{align}
     \Tilde{\mathcal{S}}_D^{\alpha,\beta, \omega}[\phi](x) &= \int_{\partial D} \Tilde{G}^{\alpha, \beta, \omega}(x-y)\phi(y) \mathrm{d}\sigma(y)\\
     &= \underbrace{\int_{\partial D} \Tilde{G}^{\alpha, 0, \omega}(x-y)\phi(y) \mathrm{d} \sigma(y)}_{ = \Tilde{\mathcal{S}}_D^{\alpha, \omega}} + \underbrace{ \int_{\partial D} \Tilde{G}^{\alpha, \beta, \omega}_{R}(x-y)\phi(y) \mathrm{d} \sigma(y)}_{ =\Tilde{\mathcal{S}}^{\alpha,\beta, \omega}_R}.
\end{align}
In the case of a circular resonator $D$ we may implement the multipole expansion method (see, for example, \cite[Appendix C]{BandGapBubbly}) to represent the single layer potential in a Fourier basis. This is of particular interest as for numerical computations we can use a truncated Fourier basis to achieve a finite matrix representation of the single layer potential. The density function $\varphi = \varphi(\theta)$ is a $2\pi$-periodic function, so it admits the following Fourier series expansion
\begin{equation}
    \varphi = \sum_{n \in \Z} a_n e^{\i  n \theta}.
\end{equation}
We now compute the remainder term of the single layer potential $\Tilde{\mathcal{S}}^{\alpha,\beta, \omega}_R[e^{\i n\theta}]$. As the single layer potential consists of an integral over the boundary of the circular resonator, we may express the variables $x = r e^{\i  \theta}$, $y = r e^{\i  \varphi}$ and $\mathrm{d}\varphi(y) = r \mathrm{d}\varphi$ in polar coordinates, where $r$ designates the radius of the resonators. The resulting representation of the single layer potential is
\begin{equation}\label{eq: single layer potential compuation}
    \Tilde{\mathcal{S}}^{\alpha,\beta, \omega}_R[e^{\i n\varphi}](re^{\i \theta}) = \int_0^{2\pi} \Tilde{G}^{\alpha, \beta}_{R}\bigl(r(e^{\i  \theta}- e^{\i  \varphi})\bigr) e^{\i n\varphi} r \mathrm{d}\varphi.
\end{equation}
Evaluating the following scalar product yields the matrix of the operator in the  Fourier basis,
\begin{align}
    (\Tilde{\mathcal{S}}^{\alpha,\beta, \omega}_R)_{m, n} &= \frac{1}{2\pi}\langle e^{\i m \theta},\Tilde{\mathcal{S}}^{\alpha,\beta, \omega}_R[e^{\i n\varphi}] \rangle\\ &= \frac{1}{2\pi}\int_0^{2\pi} e^{-\i m\theta}\Tilde{\mathcal{S}}^{\alpha,\beta, \omega}_R[e^{\i n\varphi}](re^{\i \theta}) \mathrm{d} \theta.
\end{align}
Truncating to $\lvert m \rvert,\lvert n \rvert  \leq K$ makes $(\Tilde{\mathcal{S}}^{\alpha,\beta, \omega}_R)_{m, n}$ a finite matrix. In other words, the single layer potential is represented by a finite (square) $(2K+1)$-dimensional matrix.
The same procedure can be applied to $\Tilde{\mathcal{S}}_D^{\alpha, \omega}$. This yields a matrix representation for the single layer potential for a complex quasimomentum
\begin{equation}\label{eq: discrete single layer potential}
    (\Tilde{\mathcal{S}}_D^{\alpha,\beta, \omega})_{m, n} = ( \Tilde{\mathcal{S}}^{\alpha, \omega}_D)_{m, n} +  (\Tilde{\mathcal{S}}^{\alpha,\beta, \omega}_R)_{m, n} \in \R^{(2K+1)\times(2N+1)}.
\end{equation}
The convergence rate of the multipole method is exponential. For numerical computations, we choose $K = 5$.

\subsection{Evaluating the single layer potential}
We now outline the numerical evaluation of the single layer potential for the remainder term. We seek to avoid using any kind of numerical integrators by evaluating the integrals explicitly. Since the lattice sum is absolutely convergent by the dominated convergence theorem we may interchange integration and summation over the lattice.
The idea is to integrate each term in the lattice sum first and then take the lattice sum over the previously computed integral. As a consequence, 
{\small \begin{align}
    &(\Tilde{\mathcal{S}}_R^{\alpha,\beta, \omega})_{m, n} = \frac{1}{2\pi}\int_0^{2\pi} e^{-\i m\theta} \int_0^{2\pi} \Tilde{G}^{\alpha, \beta,\omega}_{R}\bigl(r(e^{\i  \theta}- e^{\i  \varphi})\bigr) e^{\i n\varphi} r \mathrm{d}\varphi \mathrm{d}\theta \label{eq: SLP integral definition}\\
    \qquad &={\frac{r}{2 \pi \lvert Y \rvert}\int_0^{2\pi}  e^{-\i m\theta} \int_0^{2\pi}\sum_{q \in \Lambda^*} \frac{ e^{\i (\alpha + q)\cdot (r(e^{\i  \theta}- e^{\i  \varphi}))} \left(2 \i \beta \cdot (\alpha + q) - \lvert \beta \rvert^2 \right)}{\bigl(\lvert \alpha + q \rvert^2 + 2 \i\beta \cdot (\alpha + q) - (\omega^2 + \lvert \beta \rvert^2) \bigr)\left(\lvert \alpha + q\rvert^2 - \omega^2 \right)} e^{\i n\varphi}  \mathrm{d}\varphi \mathrm{d}\theta}\\
    &= \frac{r}{2 \pi \lvert Y \rvert}\sum_{q \in \Lambda^*} \frac{ \left(2 \i \beta \cdot (\alpha + q) - \lvert \beta \rvert^2\right) \int_0^{2\pi}  e^{-\i m\theta} \int_0^{2\pi}e^{\i (\alpha + q)\cdot (r(e^{\i  \theta}- e^{\i  \varphi}))}  e^{\i n\varphi}  \mathrm{d}\varphi \mathrm{d}\theta}{\bigl(\lvert \alpha + q \rvert^2 + 2 \i \beta \cdot (\alpha + q) - (\omega^2 + \lvert \beta \rvert^2) \bigr)\left(\lvert \alpha + q\rvert^2 - \omega^2 \right)}.\label{eq: SLP sum then integral}
\end{align}}\\%
We evaluate the integral inside the sum explicitly
\begin{equation}\label{eq: term-wise integral}
    \int_{0}^{2\pi}\int_0^{2\pi} e^{\i (\alpha + q)\cdot (x-y)}e^{-\i m \theta}e^{\i n\varphi}\mathrm{d} \varphi \mathrm{d}\theta.
\end{equation}
Here, $x = re^{\i \theta}$ while $y =r e^{\i \varphi}$. The generating function for the Bessel function is
\begin{equation}
    e^{\frac{1}{2}z(t-t^{-1})} = \sum_{m = - \infty}^\infty t^m \mathbf{J}_m(z).
\end{equation}
When we set $t = e^{\i  \theta}$, we obtain
\begin{equation}
    e^{\frac{1}{2}z(e^{\i \theta}-e^{-\i \theta})} = \sum_{m = - \infty}^\infty e^{\i m \theta }\mathbf{J}_m(z),
\end{equation}
so that 
\begin{equation}
    \int_0^{2\pi} e^{\i  z \sin(\theta)}e^{-\i m\theta} \mathrm{d} \theta = 2 \pi \mathbf{J}_m(x).
\end{equation}
Let us rotate the vector $(\alpha + q)$ such that it becomes parallel to the $x$-axis. Let $\psi = \operatorname{arg}(\alpha + q)$. In other words, we do the following change of variables:
\begin{align}
    \theta' &= \theta - \psi,\\
    \varphi' &= \varphi - \psi.
\end{align}
This way, $(\alpha + q)\cdot(x -y) = r \lvert(\alpha + k)\rvert(\cos(\theta')- \cos(\varphi'))$.
This allows us to rewrite \eqref{eq: term-wise integral}  as 
\begin{align}
    &\int_0^{2\pi} \int_0^{2\pi} e^{\i (\alpha + q) \cdot (x-y)} e^{-\i m \theta}e^{\i n \varphi} \mathrm{d} \varphi \mathrm{d}\theta\\
   & = \int_0^{2\pi} e^{\i r\lvert (\alpha + q) \rvert\cos(\theta')}e^{-\i m\theta'}e^{-\i m\psi}\mathrm{d}\theta' \int_0^{2\pi} e^{\i r\lvert(\alpha + q)\rvert \cos(\varphi')}e^{\i n \varphi'}e^{\i n \psi} \mathrm{d}\varphi'\\
   &= e^{\i m\left(\frac{\pi}{2}- \psi\right)}(2\pi \mathbf{J}_m(r\lvert (\alpha + q)\rvert)) e^{-\i n\left(\frac{\pi}{2}- \psi\right)}(2\pi \mathbf{J}_{-n}(r\lvert(\alpha + q )\rvert))\\
   &= (2\pi)^2 e^{\i  \psi(n-m)}\i^{m-n} (-1)^n\mathbf{J}_{m}(r\lvert(\alpha + q )\rvert)\mathbf{J}_{n}(r\lvert(\alpha + q )\rvert).
\end{align}
As a consequence, \eqref{eq: SLP sum then integral}  can be rewritten as,
{\small
\begin{equation}\label{eq:Sr}
    (\Tilde{\mathcal{S}}_R^{\alpha,\beta, \omega})_{m, n} = \frac{2\pi r}{\lvert Y \rvert}\sum_{q \in \Lambda^*} \frac{ \left(2 \i \beta \cdot (\alpha + q) - \lvert \beta \rvert^2\right)  e^{\i  \psi(n-m)}\i^{m-n} (-1)^n\mathbf{J}_{m}(r\lvert(\alpha + q )\rvert)\mathbf{J}_{n}(r\lvert(\alpha + q )\rvert)}{\bigl(\lvert \alpha + q \rvert^2 + 2 \i \beta \cdot (\alpha + q) - (\omega^2 + \lvert \beta \rvert^2) \bigr)\left(\lvert \alpha + q\rvert^2 - \omega^2 \right)}.
\end{equation}}\\%
As the integral in \eqref{eq: SLP integral definition}  has been computed in closed form, we do not introduce any numerical quadrature error. As a consequence, the numerical error introduced in the evaluation of the entries of the single layer potential solely stems from the truncation of the lattice sum and the multipole expansion method.
\subsection{Computation of the Fourier coefficients} When evaluating the capacitance matrix \eqref{eq: capacitance evaluation}  we also need to represent $e^{\beta \cdot x}$ in the Fourier basis. The Fourier coefficients are given by
\begin{align}
    f_n &= \frac{1}{2\pi}\int_{0}^ {2\pi} e^{\beta \cdot x} e^{-\i  n \theta} \mathrm{d} \theta \label{eq: exp(beta x) Fourier}\\
    &= \frac{1}{2\pi} \int_{0}^ {2\pi} e^{r \lvert \beta \rvert \cos(\theta)}e^{-\i n \theta}e^{-\i  n \psi} \mathrm{d}\theta,
\end{align}
where  $\psi = \operatorname{arg}(\beta)$. We do a change of variables $\theta' = \theta - \psi$, resulting in
\begin{align}
    f_n &= \frac{1}{2\pi} e^{\i n \left(\frac{\pi}{2}- \psi \right)}\int_0^{2\pi} e^{r \lvert \beta \rvert \sin(\theta)}e^{-\i  n \theta} \mathrm{d} \theta \\
    &= \frac{1}{2\pi} e^{\i n\left(\frac{\pi}{2}- \psi \right)}2\pi\mathbf{J}_n(-r \lvert \beta \rvert \i) \\
    &= (-1)^n e^{\i n \left(\frac{\pi}{2}- \psi \right)} \i^n \mathbf{I}_n(r \lvert \beta \rvert)\\
    &= e^{-\i n \psi}\mathbf{I}_n(r \lvert \beta \rvert),
\end{align}
where $\mathbf{I}_n(r\lvert \beta \rvert)$ is the modified Bessel function.

\bibliographystyle{abbrv}
\bibliography{bibliography_master}

\end{document}